\documentclass[11pt,UKenglish]{amsart}
\usepackage{babel}
\usepackage[T1]{fontenc}
\usepackage[utf8]{inputenc}
\usepackage[babel]{microtype}
\usepackage{csquotes}
\usepackage{amscd, color}
\usepackage{amssymb}
\usepackage{amsthm}
\usepackage{caption}
\usepackage{subcaption}
\usepackage{thmtools}
\usepackage{thm-restate}
\usepackage{graphicx}
\usepackage[pdfusetitle]{hyperref}
\usepackage[breakable]{tcolorbox}
\usepackage{longtable}
\usepackage[mathscr]{euscript}
\usepackage{etoc}
\usepackage{cancel}
\usepackage{soul}

\hypersetup{pdfencoding=auto,bookmarksnumbered=true,hypertexnames=false}

\etocsettocstyle{\section*{\contentsname}\hypersetup{hidelinks}}{}
\etocsetstyle{section}{}{\noindent}{\textbf{\etocname{}}\dotfill\etocpage\smallskip\par}{}
\etocsetstyle{subsection}{\etocskipfirstprefix\leftskip2em\rightskip1.3em\noindent}{, }{\textit{\footnotesize\etocname{} (p. \etocpage)}}{.\par\leftskip0cm\rightskip0cm}

\DeclareCaptionLabelFormat{bf-parens}{\textbf{(#2)}}

\captionsetup{font=small,subrefformat=bf-parens}
\subcaptionsetup{labelfont=bf}

\theoremstyle{plain}

\newtheorem{Thm}{Theorem}[section]
\newtheorem{Lemma}[Thm]{\bf Lemma}
\newtheorem{Corollary}[Thm]{\bf Corollary}
\newtheorem{Theorem}[Thm]{\bf Theorem}
\newtheorem{Proposition}[Thm]{\bf Proposition}
\newtheorem{Notation}[Thm]{\bf Notation}

\theoremstyle{definition}
\newtheorem{Definition}[Thm]{\bf Definition}
\theoremstyle{remark}
\newtheorem{Remark}[Thm]{\bf Remark}

\newtheoremstyle{Cl}{5pt}{3pt}{\sl}{}{\it}{:}{.5em}{}
\theoremstyle{Cl}

\newtcolorbox{bluebox}{colback=blue!5!white,colframe=blue!75!black,breakable,oversize}

\graphicspath{{img/}}

\DeclareMathOperator{\spt}{spt \,}

\def\al{\alpha}
\def\be{\beta}
\def\de{\delta}

\def\eps{\varepsilon}
\def\ga{\gamma}
\def\G{\Gamma}

\def\la{\lambda}

\def\om{\omega}

\def\bP{\mathbb P}
\def\R{{\mathbb R}}

\def\N{{\mathbb N}}
\def\T{{\mathbb T}}
\def\A{{\mathcal A}}

\def\cG{{\mathcal G}}

\def\cT{{\mathcal T}}

\def\ov{\overline}

\def\co{\mathrm{co}}

\def \txt{\qquad\text}

\def\begincproof{

    \begin{proof}
    }
    \def\endcproof{
    \end{proof}

}

\DeclareRobustCommand{\SkipTocEntry}[5]{}

\textwidth 15.0cm
\evensidemargin 0.6cm
\oddsidemargin 0.6cm

\openup 3pt

\swapnumbers


\title[Mean field games]{Differential and  variational approach to first order Mean Field Games in a generalized form.}

\author{Antonio Siconolfi}
\address{Dipartimento di Matematica, Sapienza Universit\`a di Roma, Italy.}
\email{siconolf@mat.uniroma1.it}

\begin{document}

\maketitle

\begin{abstract}

We investigate time-dependent, first-order Mean Field Games on the torus $\T^N$
comparing, in a broad and general framework, the classical differential formulation
— given by a Hamilton–Jacobi equation coupled with a continuity equation — with a
variational approach based on fixed points of a multivalued map acting on probability
measures over trajectories.

We prove existence of fixed points for very general Hamiltonians. When the Hamiltonian
is differentiable with respect to the momentum, we show that the evaluation curve of
any such fixed point solves a continuity equation driven by a vector field $W_{g_0}$,
associated with the final condition $g_0$.

This field is defined without requiring additional regularity conditions on the value
function solving the Hamilton--Jacobi equation. The field coincides with the classical
vector field of Mean Field Systems at space--differentiability points of the value
function lying in the space--time regions where optimal trajectories concentrate.

Our analysis therefore provides a unified framework that bridges the differential
and variational viewpoints in Mean Field Games, showing how aggregate optimality
conditions naturally lead to continuity-equation descriptions under the sole
assumption of differentiability of $H$ in $p$.

\end{abstract}

\section{Introduction}

We study a time--dependent first order Mean Field Game (MFG) model posed on the
$N$--dimensional flat torus $\T^N$ over a finite time horizon $T$. The Hamiltonian
\[
H(x,\mu,p)
\qquad x \in \T^N, \ \mu \in \bP(\T^N), \ p \in \R^N,
\]
where $\bP(\T^N)$ is the space of Borel probability measures in $\T^N$, is considered
in a rather general form. As the title suggests, our main purpose is to compare
— within the broadest possible setting — two approaches to the problem, which we
refer to as the {\em differential} and the {\em variational} formulation.

The first one, classical in the MFG literature, consists in determining solutions
— precisely defined later — of a system comprising a backward Hamilton–Jacobi (HJ)
equation coupled with a forward continuity equation.

In the HJ equation the Hamiltonian appears as $H(x,\xi(t),Dv)$, thus depending on
time through a curve of measures $\xi(t)$ in $\bP(\T^N)$ defined in $[0,T]$. This
curve is, in turn, the unknown of the continuity equation, driven by the vector field
\[
H_p(x,\xi(t),-Dv(x,t)),
\]
where $H_p$ denotes the derivative of the Hamiltonian with respect to the momentum
variable, and $Dv$ is the spatial gradient of the value function $v$, given by
Lax--Oleinik (LO) formula, solving the HJ equation.

As a matter of fact, the term $H_p(x,\xi(t),-Dv(x,t))$ is elusive, since the
existence of $Dv$ is not in general guaranteed.

The second method relies on the observation that the above system represents the
same optimization problem at two different scales: that of the typical agent (the HJ
equation) and that of the global population (the continuity equation). This viewpoint
shifts the focus from the differential system to a direct variational characterization
of the optimal objects — curves and measures — yielding a simple and direct formulation.

The problem ultimately reduces to finding fixed points of a suitable multivalued map
defined in a space of probability measures supported on curves of $\T^N$. In this
framework, the system formulation becomes less central, since no explicit knowledge
of the value function solving the HJ equation is required.

In the differential approach one works with curves of measures in $\bP(\T^N)$, while
in the variational framework the relevant fixed points lie in the space $\bP(\G)$ of
probability measures on $\G$, where $\G$ denotes the space of continuous curves
$[0,T] \to \T^N$. To each $\xi \in \bP(\G)$ one associates a canonical curve of
measures
\[\xi(t) = \mathrm{ev}_t \# \xi \quad t \in [0,T], \qquad
\mathrm{ev}_t : \G \to \T^N,
\]
where $\#$ stands for the push--forward. We call $t \mapsto \xi(t)$ the
{\em evaluation curve} associated to $\xi$.

Under basic assumptions — continuity of $H$ in all arguments plus convexity and
superlinearity in $p$ ({\bf (H1)}--{\bf (H3)}) — we establish existence of such
fixed points.

Under the additional assumption of differentiability of $H$ in $p$ ({\bf (H4)}), we
prove that the evaluation curve of any fixed point in $\bP(\G)$ solves a continuity
equation. Strengthening the superlinearity condition ({\bf (H5)}) yields that the
evaluation curve is absolutely continuous with a suitable exponent.

In the results above we replace the classical driving field
\[
H_p(x,t,-Dv(x,t))
\]
whose definition requires differentiability of the value function — an assumption not
generally available without imposing restrictive conditions on $H$. This motivates
the expression {\em in a generalized form} in the title.

The replacement vector field is well defined on the whole $\T^N \times [0,T]$
under {\bf (H1)}--{\bf (H4)}, without any further regularity of the value function.
It coincides with $H_p(x,t,-Dv(x,t))$, wherever the spatial gradient of $v$ does
exist, in the space--time region occupied by optimal trajectories — an issue to which
we return later.

The differential MFG framework was introduced in the mathematical community in 2006
in the seminal works of Lasry and Lions \cite{LL106,LL206,LL07} and, simultaneously,
by Huang, Caines and Malham\'{e} \cite{HCM06,HCM107,HCM207}. Since then it has
generated a substantial body of literature — both qualitative and quantitative —
drawing from viscosity solutions, optimal transport, Wasserstein spaces, optimal
control, game theory, probability, and Fokker–Planck equations.

The differential MFG system captures the behavior of a large population of
indistinguishable and negligible agents. The HJ equation represents the optimal
control problem of a typical agent. The convexity of $H$, though arising in a
game-theoretic model, stems from the negligibility of individual players, which
rules out direct interactions; other agents affect the equation only through an
aggregate density representing a conjectured evolution of the total state.

On the other hand, the decisions of each agent induce a global motion of the system,
represented by a curve of measures solving the continuity equation. The coupling
arises because optimal individual trajectories are integral curves of the same
vector field driving the continuity equation.

The equilibrium state in fact is reached when the conjectured aggregate evolution
matches the one actually induced by the optimal strategies. Mathematically, this
corresponds to a pair $(v,\xi(t))$ of an individual value function solution to the
HJ equation (in viscosity sense) and a curve of measures solving the continuity
equation (in distributional sense).

A nontrivial conceptual shift occurs when moving from the differential to the
variational approach: one passes from curves of measures to measures on curves,
among which the fixed points must be found. Several works develop this point of
view \cite{Sa00, Sa16, Sa17, CC18, Sa19}.

We organize our analysis along two complementary lines:

\begin{itemize}
  \item[--] the derivation of aggregate optimality conditions and their
    differential counterpart, namely the continuity equation;
  \item[--] the fixed--point problem for the multivalued map that relates
    aggregate decision rules to optimal collective behavior.
\end{itemize}

For the first line of investigation, we do not yet impose any dependence of the
Hamiltonian on an aggregate evolution. We therefore work with an {\em abstract}
Hamiltonian $H_0(x,t,p)$. Given a continuous final condition $g_0$ in the
Hamilton--Jacobi equation, we select---among the curves of minimal action for the
corresponding Lagrangian $L_0$---a distinguished family of trajectories, called
$g_0$--optimal.

We therefore consider  the Borel probability measures on the space of curves
supported on $g_0$--optimal paths. We  call them
$g_0$--optimal as well. Although the definition is elementary, these measures exhibit
strong variational features: using an appropriate version of Kantorovich
duality, we show that they minimize the lifted action functional
(Proposition~\ref{optimum}), under assumptions {\bf (H1)}--{\bf (H3)}.

When the Hamiltonian is differentiable in $p$ ({\bf (H4)}), a continuity equation
emerges as a necessary differential condition for any $g_0$--optimal measure. In
this framework, the value function automatically admits distinguished
directional derivatives on a set
\[
\A_{g_0} \subset \T^N \times [0,T],
\]
which determines a Borel vector field $W_{g_0}$. The set $\A_{g_0}$ is large in
the sense that, for every $g_0$--optimal curve $\zeta$, the points
$(\zeta(t),t)$ belong to a subset of $\A_{g_0}$ for almost every $t$. Moreover,
whenever the value function is differentiable in space at such a point,
\[
W_{g_0}(x,t) = H_p(x,t,-Dv(x,t)).
\]

We prove that each $g_0$--optimal trajectory is an integral curve of $W_{g_0}$.
Consequently, the evaluation curve of any $g_0$--optimal measure solves the
continuity equation driven by $W_{g_0}$.

For the second line of investigation, we return to Hamiltonians of the form
$H(x,\xi(t),p)$, under assumptions {\bf (H1)}--{\bf (H3)}. Employing the
locally convex variant of Kakutani’s fixed point theorem
(see~\cite{CC18,Sa19}), together with topological arguments, a suitable
perturbation of the Lagrangian, and the variational characterization of
$g_0$--optimal measures, we establish the existence of fixed points.

 Since each
fixed point is itself $g_0$--optimal, the previous analysis applies: the
evaluation curve $\xi^*(t)$ of any fixed point $\xi^*$ satisfies the continuity
equation driven by $W_{g_0}$, where $W_{g_0}$ is constructed from the value
function solving the Hamilton--Jacobi equation with Hamiltonian depending on
$\xi^*(t)$.

We believe that the most significant outputs of this work are the following:

For the abstract Hamiltonian $H_0(x,t,p)$:
\begin{itemize}
  \item[--] Under assumptions {\bf (H1)}--{\bf (H4)}, the fact that the evaluation
  curve solves the continuity equation driven by $W_{g_0}$ is a necessary differential
  condition for $g_0$--optimality of a measure in $\bP(\G)$ (Theorem \ref{assone}).
\end{itemize}

For the Hamiltonian $H(x,\mu,p)$ of the MFG model:
\begin{itemize}
  \item[--] Existence under the sole assumptions {\bf (H1)}--{\bf (H3)} of a fixed
  point of the multivalued map $\cT : \bP(\G) \to \bP(\G)$ defined in \eqref{defTau}
  (Theorem \ref{mfg}).
  \item[--] Under assumptions {\bf (H1)}--{\bf (H4)}, any fixed point $\xi^*$,
  coupled with the solution of the HJ equation with Hamiltonian $H(x,\xi^*(t),p)$
  given by LO formula, solves the differential MFG system with driving vector field
  $W_{g(\cdot,\xi^*(T))}$ (Theorem \ref{teoremone}).
\end{itemize}

\medskip

\subsection{Outline of the paper}

Section \ref{preli} introduces notations, basic measure--theoretic results used
throughout the paper, and the assumptions {\bf (H1)}--{\bf (H3)} on the Hamiltonian.
Section \ref{LLOO} discusses properties of the action functional and the LO formula
providing the relevant solution to the time--dependent HJ equation coupled with final
datum $g_0$. The key notion of $g_0$--optimal curve (Definition \ref{optifinal})
is derived, along with a compactness result (Proposition \ref{compattone}).

Section \ref{optical} proves that $g_0$--optimal measures exist for any initial
distribution. Their intrinsic variational properties are given in Proposition
\ref{optimum}, and the compactness result for optimal curves is extended to measures
(Proposition \ref{compattonebis}). Section \ref{field} is devoted to the construction
of the vector field $W_{g_0}$, which is then related to $H^0_p(x,t,-Dv(x,t))$ in
Section \ref{comparozzi}.

Section \ref{continual} contains the main Theorem \ref{assone} on $g_0$--optimality
and continuity equation. A preliminary step is the proof that any $g_0$--optimal
curve is an integral curve of $W_{g_0}$.

In the final sections we return to the Hamiltonian $H(x,\mu,p)$, and establish
the fixed point result (Theorem \ref{mfg}, Section \ref{fissa}). Finally,
Section \ref{generale} deals with the differential MFG result in a generalized
form (Theorem \ref{teoremone}).

The four appendices contain a summary on disintegration theory, an analysis on the
function given by LO formula, topological facts on the strict topology in the
non--local compactness setting, and results on a perturbed Lagrangian used in the
fixed point theorem.

\bigskip

\section{Preliminaries}\label{preli}

\subsection{Notations and basic results}

We denote by $\T^N$ the $N$--dimensional flat {\em torus}. Given $x$, $y$ in $\T^N$,
we write $|x-y|$ for their Euclidean distance. By a {\em curve} in $\T^N$ we mean,
unless otherwise specified, an absolutely continuous (AC) curve.

In a space of measures, the term {\em curve} will instead refer to any map from a
real interval to the space.

For any set $E$, $\chi_E$ denotes the characteristic function.

We use the abbreviation lsc (resp. usc) for lower (resp. upper) semicontinuous.

For a measure $\mu$, we denote by $\spt \mu$ its {\em support}.

For $r \geq 1$, we denote by $L^r([0,T])$ the space of functions defined in $[0,T]$
which are $r$--summable with respect to the $1$--dimensional Lebesgue measure.

If $X$ is a metric space, $x \in X$ and $r > 0$, we denote by $B(x,r)$ the ball
centered at $x$ with radius $r$.

Given a continuous function $u$, we call {\em subtangent} (resp. {\em supertangent})
to $u$ at a point a viscosity test function from below (resp. from above). Unless
otherwise specified, the function $u$ and the subtangent (resp. supertangent)
coincide at the test point. If they coincide {\em only} at such a point, we say that
the subtangent (resp. supertangent) is {\em strict}.

We recall two standard results that will be used throughout the paper:

\begin{Proposition}\label{paglia}
Let $X$ be a Polish space, let $f : X \to \R$ be a lsc function, and let
$\mu_n \to \mu$ narrowly. Then
\[
\liminf_n \int f \, d\mu_n \geq \int f \, d \mu.
\]
\end{Proposition}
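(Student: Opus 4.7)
The plan is to reduce to a truncated, bounded-below setting and then approximate $f$ from below by bounded continuous functions, so that narrow convergence can be invoked on each approximant and the conclusion obtained by monotone convergence.

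First, I would replace $f$ by its truncation $f_M := \max(f,-M)$ for $M>0$, which is still lsc; once the inequality is established for each $f_M$ in place of $f$, I recover the original statement by letting $M \to +\infty$ and using monotone convergence on $\int f_M\,d\mu$. Hence, without loss of generality, I may assume that $f$ is bounded from below.

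Next, on the Polish (hence metric) space $X$ with distance $d$, I would invoke the classical Moreau--Yosida-type regularization
\[
f_k(x) := \inf_{y \in X} \bigl\{ f(y) + k\, d(x,y) \bigr\}, \qquad k \in \N.
\]
Each $f_k$ is $k$-Lipschitz (so continuous), bounded (below by $\inf f$, above by $f$), and the sequence $(f_k)_k$ is nondecreasing with $f_k(x) \uparrow f(x)$ pointwise at every $x \in X$ — this last fact is a standard consequence of the lower semicontinuity of $f$. For each fixed $k$, the function $f_k$ is bounded and continuous, so the narrow convergence $\mu_n \to \mu$ yields
\[
\int f_k \, d\mu = \lim_{n} \int f_k \, d\mu_n \leq \liminf_{n} \int f \, d\mu_n,
\]
where the inequality uses $f_k \leq f$.

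Finally, letting $k \to \infty$ and applying monotone convergence to the left-hand side gives $\int f\,d\mu \leq \liminf_n \int f\,d\mu_n$, which is the required inequality. The only genuinely delicate point in the argument is the construction of the approximating sequence $(f_k)$: proving that the Moreau--Yosida envelopes increase pointwise to a lsc function is where lower semicontinuity of $f$ is actually used, while everything else is a routine combination of the definition of narrow convergence with monotone convergence.
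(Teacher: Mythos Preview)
Your Moreau--Yosida approach is the standard one (and is essentially what the reference cited by the paper carries out), but there are two gaps.

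The first and more serious one is the truncation step. Since $f_M=\max(f,-M)\geq f$, the inequality $\liminf_n\int f_M\,d\mu_n\geq\int f_M\,d\mu$ does \emph{not} yield the desired bound for $f$: you only get $\liminf_n\int f_M\,d\mu_n\geq\int f\,d\mu$, and there is no way to pass from the left-hand side down to $\liminf_n\int f\,d\mu_n$, because $\int f_M\,d\mu_n\geq\int f\,d\mu_n$ points the wrong way. In fact, without a bounded-below hypothesis the statement is simply false: take $X=\R$, $\mu_n=(1-\tfrac1n)\delta_0+\tfrac1n\delta_n\to\delta_0=\mu$ narrowly, and $f(x)=-x^2$; then $\int f\,d\mu_n=-n\to-\infty$ while $\int f\,d\mu=0$. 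The reference cited in the paper states the result for $f$ lsc and bounded below, and every use of this proposition in the paper involves an integrand bounded below; so the truncation step should be dropped and ``bounded below'' read as part of the hypotheses.

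The second gap is minor: the envelopes $f_k$ are $k$-Lipschitz and bounded \emph{below}, but not bounded above in general (you only have $f_k\leq f$), so they need not lie in $C_b(X)$ and narrow convergence does not directly give $\int f_k\,d\mu_n\to\int f_k\,d\mu$. The fix is one further truncation: apply narrow convergence to $\min(f_k,N)\in C_b(X)$, obtain
\[
\int \min(f_k,N)\,d\mu \;=\; \lim_n \int \min(f_k,N)\,d\mu_n \;\leq\; \liminf_n \int f\,d\mu_n,
\]
let $N\to\infty$ by monotone convergence, and then let $k\to\infty$.
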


\begin{proof}
See \cite[Section 5.1.1]{AmS08}.
\end{proof}

\smallskip

\begin{Proposition}\label{change}
Let $X$, $Y$ be Polish spaces, $\mu$ a measure on $X$. We consider a Borel map
$G : X \to Y$ and a Borel function $\varphi : Y \to \R$ bounded from below. Then
\begin{equation}
\int \varphi(G(x)) \, d\mu = \int \varphi(y) \, d G \# \mu,
\end{equation}
where $G\# \mu$ is the push--forward of $\mu$ to $Y$ through $G$.
\end{Proposition}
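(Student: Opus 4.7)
The plan is the classical ``bootstrap from indicators'' argument that underlies every change-of-variables formula for push-forward measures. The key observation is that the identity is essentially built into the definition of $G\#\mu$ when $\varphi$ is an indicator, so one only needs to verify that the two sides propagate together through the standard approximation procedure.

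First I would handle the case $\varphi = \chi_E$ for a Borel set $E \subset Y$. Since $\chi_E \circ G = \chi_{G^{-1}(E)}$ and the push-forward is defined by $(G\#\mu)(E) = \mu(G^{-1}(E))$ (well-defined because Borel measurability of $G$ ensures $G^{-1}(E)$ is Borel in $X$), both sides of the displayed identity equal $\mu(G^{-1}(E))$. By linearity the identity extends immediately to any non-negative Borel simple function $\varphi = \sum_i a_i \chi_{E_i}$.

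Next I would lift to arbitrary non-negative Borel $\varphi$ by monotone approximation. Choose a pointwise increasing sequence $\varphi_n$ of non-negative simple Borel functions with $\varphi_n \uparrow \varphi$ on $Y$; then $\varphi_n \circ G$ is an increasing sequence of non-negative simple Borel functions on $X$ with $\varphi_n \circ G \uparrow \varphi \circ G$. Applying the monotone convergence theorem on both sides of the identity (which holds for each $\varphi_n$ by the previous step) yields the identity for $\varphi$.

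Finally, for $\varphi$ only bounded from below by some constant $-M$, I would shift by writing $\varphi = (\varphi + M) - M$. The function $\varphi + M$ is non-negative Borel, so the previous step applies; subtracting the constant term, which contributes $M\,\mu(X) = M\,(G\#\mu)(Y)$ equally on both sides, gives the claim. The only delicate point is this last subtraction, which implicitly requires $\mu$ to be finite (as is the case for the probability measures used throughout the paper); if $\mu$ were merely $\sigma$-finite and $\varphi$ took negative values on a set of infinite $\mu$-measure, the statement would need to be interpreted as an equality in $[-\infty,+\infty]$ with the usual conventions. Under the finiteness understood in the paper's context, the identity follows cleanly.
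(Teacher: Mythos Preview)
Your argument is correct and is the standard bootstrap proof of the change-of-variables formula; the caveat about finiteness of $\mu$ in the last step is well observed and is satisfied in the paper's setting. The paper itself does not give a proof at all but simply refers to \cite[Proposition~8.1]{ADPM11}, so your proposal is strictly more detailed than what appears there; there is nothing to compare beyond noting that the cited reference proves the result by essentially the same layer-cake argument you wrote out.
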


\begin{proof}
See \cite[Proposition 8.1]{ADPM11}.
\end{proof}

\subsection{The measure--theoretic framework}

Given $T > 0$, we denote by $\G$ the space of continuous curves $[0,T] \to \T^N$,
endowed with the uniform topology. For any $\zeta \in \G$ we denote by $\|\zeta\|$
its uniform norm.

We denote by $\bP(\T^N)$ and $\bP(\G)$ the spaces of Borel probability measures
in $\T^N$ and $\G$, respectively, both endowed with the narrow topology.

\begin{Remark}\label{cedric}
All Wasserstein distances metrize the narrow topology on both $\bP(\T^N)$ and
$\bP(\G)$, making them Polish spaces. This follows from the compactness of $\T^N$
and the boundedness of $(\G,\|\cdot\|)$.
\end{Remark}

\smallskip

\begin{Notation}
We denote by $d_W$ the first Wasserstein distance both on $\bP(\T^N)$ and
$\bP(\G)$.
\end{Notation}

\smallskip

\begin{Notation}\label{postcedric}
For $\mu \in \bP(\T^N)$ and $f$ integrable with respect to $\mu$, we set
\[
\langle \mu,f\rangle = \int f \, d \mu.
\]
\end{Notation}

\smallskip

\begin{Definition}\label{spacetime}
For $\zeta \in \G$, we define the {\em space--time occupation measure} $\mu_\zeta$
on $\T^N \times [0,T]$ by
\[
\mu_\zeta(B)
= \frac{1}{T} \int_0^T \chi_{\{ t \mid (\zeta(t),t) \in B \}} \, dt
\]
for any Borel set $B \subset \T^N \times [0,T]$. Thus $\mu_{\zeta}(B)$ gives the
fraction of time that $(\zeta(t),t)$ spends in $B$.
\end{Definition}

\smallskip

For sets of the form $E \times I$, with $E \subset \T^N$ and $I$ an interval
contained in $[0,T]$,
\[
\mu_\zeta(E \times I)
= \frac{1}{T} \int_I \chi_{\{ t \mid \zeta(t) \in E \}} \, dt.
\]

The first marginal of $\mu_\zeta$ is the usual {\em (space) occupation measure}
\[
\mu_\zeta(A \times [0,T])
= \frac{1}{T} \int_0^T \chi_{\{ t \mid \zeta(t) \in A \}} \, dt
\txt{for $A \subset \T^N$ Borel.}
\]

\smallskip

For $\xi \in \bP(\G)$, we define the {\em evaluation curve}
\[
\xi(t) = \mathrm{ev}_t \# \xi
\txt{for $t \in [0,T]$,}
\]
where $\#$ indicates the push--forward, and
\[
\mathrm{ev}_t(\zeta) = \zeta(t)
\txt{for $\zeta \in \G$.}
\]

We also refer to $\xi(0)$ and $\xi(T)$ as the {\em initial} and {\em final
distribution} of $\xi$.

\medskip

\begin{Lemma}\label{preservone}
Let $\xi \in \bP(\G)$. Then the evaluation curve $t \mapsto \xi(t)$ is continuous
from $[0,T]$ with the natural topology to $\bP(\T^N)$ with the narrow topology.
\end{Lemma}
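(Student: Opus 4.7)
The plan is to verify continuity directly from the definition of narrow convergence, using Proposition~\ref{change} to transfer integrals against $\xi(t)$ back to integrals on $\G$ against $\xi$, and then concluding with dominated convergence applied to the evaluation map.

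More concretely, fix $t \in [0,T]$ and a sequence $t_n \to t$, and pick an arbitrary bounded continuous test function $f : \T^N \to \R$. Applying Proposition~\ref{change} with $G = \mathrm{ev}_{t_n}$ (respectively $\mathrm{ev}_t$) and $\varphi = f$, I would rewrite
\[
\int_{\T^N} f \, d\xi(t_n) = \int_\G f(\zeta(t_n)) \, d\xi(\zeta),
\qquad
\int_{\T^N} f \, d\xi(t) = \int_\G f(\zeta(t)) \, d\xi(\zeta).
\]
The integrands now live on $\G$ and are indexed by $t_n$, which is the right setup for a pointwise limit argument.

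For each individual curve $\zeta \in \G$, continuity of $\zeta$ on $[0,T]$ together with continuity of $f$ on $\T^N$ gives $f(\zeta(t_n)) \to f(\zeta(t))$. Since $f$ is bounded (here the compactness of $\T^N$ makes every continuous $f$ automatically bounded, but we only need it bounded) the sequence $\{f(\zeta(\cdot))\}$ is dominated by the constant $\|f\|_\infty$, which is $\xi$--integrable because $\xi$ is a probability measure. Lebesgue's dominated convergence theorem then yields
\[
\int_\G f(\zeta(t_n)) \, d\xi(\zeta) \longrightarrow \int_\G f(\zeta(t)) \, d\xi(\zeta),
\]
i.e.\ $\int f \, d\xi(t_n) \to \int f \, d\xi(t)$. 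Since $f \in C_b(\T^N)$ was arbitrary, $\xi(t_n) \to \xi(t)$ narrowly, proving the claimed continuity.

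There is essentially no obstacle in this argument: the only subtlety worth noting is the legitimacy of applying Proposition~\ref{change} (which needs $\varphi$ bounded from below, trivially satisfied here since $f$ is bounded) and the standing fact that every $\zeta \in \G$ is continuous in $t$, both of which are built into the setup. So the proof is essentially a one-line application of pushforward change of variables followed by dominated convergence.
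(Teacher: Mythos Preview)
Your proof is correct and follows essentially the same approach as the paper: both use Proposition~\ref{change} to rewrite $\int f\,d\xi(t_n)$ as $\int f(\zeta(t_n))\,d\xi(\zeta)$, invoke the continuity of each $\zeta$ to get pointwise convergence of the integrands, and conclude with dominated convergence.
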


\begin{proof}
Given a sequence $t_n$ in $[0,T]$ converging to some $t_0$, we aim at showing that
\[
\int f \, d\xi(t_n) \to \int f \, d\xi(t_0)
\txt{for any continuous function $f : \T^N \to \R$.}
\]

By the change of variable formula in Proposition \ref{change} with
$X = \G$, $Y = \T^N$, $\mu = \xi$, $G = \mathrm{ev}_{t_n} / \mathrm{ev}_t$,
$\varphi = f$, we get
\[
\int f \, d\xi(t_n) = \int f(\zeta(t_n)) \, d\xi(\zeta)
\quad\txt{and}\quad
\int f \, d\xi(t_0) = \int f(\zeta(t_0)) \, d\xi(\zeta).
\]

Since
\[
f(\zeta(t_n)) \to f(\zeta(t_0))
\txt{for any $\zeta \in \G$,}
\]
we get by dominated convergence theorem
\[
\int f(\zeta(t_n)) \, d\xi \to \int f(\zeta(t_0)) \, d\xi,
\]
which concludes the proof.
\end{proof}

\smallskip

\begin{Lemma}\label{lemfranciosa1}
Let $\xi_n, \xi \in \bP(\G)$ with $\xi_n \to \xi$, then
\[
\xi_n(t) \to \xi(t)
\qquad
\hbox{in $\bP(\T^N)$, uniformly in $[0,T]$, with respect to $d_W$.}
\]
\end{Lemma}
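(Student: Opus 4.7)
The plan is to combine Skorokhod's representation theorem with the Kantorovich (coupling) characterization of $d_W$. This converts the uniform-in-$t$ statement into an expectation bound on the uniform norm over $\G$, which is then handled by dominated convergence.

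First, since $\G$ is a Polish space (Remark \ref{cedric}) and $\xi_n \to \xi$ narrowly in $\bP(\G)$, Skorokhod's representation theorem supplies $\G$-valued random variables $Z_n, Z$ on a common probability space $(\Omega, \cF, \bE)$, with laws $\xi_n$ and $\xi$ respectively, such that $Z_n \to Z$ almost surely in $(\G, \|\cdot\|)$; equivalently, $\|Z_n - Z\| \to 0$ a.s. For each $t \in [0,T]$, the joint law of $(Z_n(t), Z(t))$ is a coupling between the push-forwards $\xi_n(t) = \mathrm{ev}_t \# \xi_n$ and $\xi(t) = \mathrm{ev}_t \# \xi$, so by the coupling formulation of $d_W$
\[
d_W(\xi_n(t), \xi(t)) \;\leq\; \bE\bigl[|Z_n(t) - Z(t)|\bigr] \;\leq\; \bE\bigl[\|Z_n - Z\|\bigr],
\]
with the right-hand side independent of $t$. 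Taking the supremum over $t$ and noting that $\|Z_n - Z\|$ is uniformly bounded by the (finite) diameter of $\T^N$, dominated convergence gives $\bE[\|Z_n - Z\|] \to 0$, hence $\sup_{t \in [0,T]} d_W(\xi_n(t), \xi(t)) \to 0$.

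I foresee no substantial obstacle: the main observation is that the uniform bound on the right-hand side \emph{decouples} the variables $\zeta$ and $t$, whereas a direct Kantorovich--Rubinstein approach would require controlling the sup over $t$ and over $1$-Lipschitz test functions simultaneously. An alternative self-contained route is possible: Prokhorov tightness of $\{\xi_n\}\cup\{\xi\}$ combined with Arzel\`a--Ascoli on $\G$ yields a common modulus of continuity for the curves $t \mapsto \xi_n(t)$ up to a small error, which together with pointwise narrow convergence $\xi_n(t) \to \xi(t)$ (immediate from continuity of $\mathrm{ev}_t$) upgrades to uniform convergence. However, the Skorokhod route is more economical and directly produces the uniform bound without passing through an equicontinuity argument.
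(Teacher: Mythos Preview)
Your proof is correct and follows essentially the same route as the paper: both construct a coupling between $\xi_n$ and $\xi$ on $\G\times\G$, push it forward via $(\zeta,\eta)\mapsto(\zeta(t),\eta(t))$ to obtain a coupling of $\xi_n(t)$ and $\xi(t)$, and then bound $d_W(\xi_n(t),\xi(t))$ by the $t$-independent quantity $\int\|\zeta-\eta\|\,d\gamma_n$. The only cosmetic difference is that the paper invokes the Wasserstein convergence $d_W(\xi_n,\xi)\to 0$ directly to produce (near-)optimal couplings $\gamma_n$, whereas you obtain the coupling via Skorokhod and then use dominated convergence; the underlying estimate is identical.
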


\begin{proof}
We derive from the assumptions and Remark \ref{cedric} that $\xi_n$ converges to
$\xi$ with respect to $d_W$. There consequently exists a sequence of measures
$\ga_n$ in $\G \times \G$ with marginals $\xi_n$ and $\xi$ satisfying
\begin{equation}\label{francio1}
\int \|\eta_n - \eta\| \, d\ga_n(\eta_n,\eta) \to 0.
\end{equation}

We define, for $t \in [0,T]$, a map $\pi_t : \G \times \G \to \T^N \times \T^N$ as
\[
\pi_t(\zeta_1,\zeta_2) = (\zeta_1(t), \zeta_2(t)).
\]

Since for any $n$, $t$, and any Borel set $E \subset \T^N$,
\begin{align*}
\pi_t \# \ga_n(E \times \T^N)
&= \ga_n(\{ (\zeta,\eta) \mid \zeta(t) \in E,\, \eta(t) \in \T^N \}) \\
&= \xi_n(\{ \zeta \mid \zeta(t) \in E \})
= \xi_n(E),
\end{align*}
we derive that the first marginals of the $\pi_t \# \gamma_n$'s are $\xi_n(t)$,
and a similar computation yields that all the second marginals coincide with
$\xi(t)$.

We therefore get by change of variable formula \eqref{change}
\begin{equation}\label{franciosa2}
\int |x-y| \, d(\pi_t \# \gamma_n)
= \int |\eta_n(t)-\eta(t)| \, d\gamma_n(\eta_n,\eta)
\leq \int \|\eta_n-\eta\| \, d\gamma_n,
\end{equation}
which gives, in force of \eqref{francio1}, the claimed uniform convergence
of $\xi_n(t)$ to $\xi(t)$.
\end{proof}

\smallskip

\begin{Lemma}\label{lemdemure3}
Let $\mu_n \to \mu$ in $\bP(\T^N)$, and let $f_n$ be continuous functions in
$\T^N$ converging uniformly to $f$. Then
\[
\langle \mu_n,f_n \rangle \to \langle \mu,f \rangle.
\]
\end{Lemma}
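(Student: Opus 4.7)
The plan is to decompose the difference $\langle \mu_n,f_n\rangle - \langle \mu,f\rangle$ into two pieces, one controlled by the uniform convergence of the $f_n$ and the other by the narrow convergence of the $\mu_n$. Explicitly, I would write
\[
\langle \mu_n,f_n\rangle - \langle \mu,f\rangle
= \langle \mu_n,f_n-f\rangle + \bigl(\langle \mu_n,f\rangle - \langle \mu,f\rangle\bigr).
\]

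For the first piece, since each $\mu_n$ is a probability measure on $\T^N$, I bound
\[
|\langle \mu_n,f_n-f\rangle| \le \|f_n-f\|_\infty,
\]
which tends to $0$ by the assumed uniform convergence $f_n\to f$. Note that $f$ is automatically continuous as a uniform limit of continuous functions on $\T^N$, and bounded because $\T^N$ is compact. For the second piece, continuity and boundedness of $f$ together with the narrow convergence $\mu_n\to\mu$ give $\langle \mu_n,f\rangle\to\langle \mu,f\rangle$ directly from the definition of narrow convergence (or from Proposition~\ref{paglia} applied to both $f$ and $-f$, which are lsc and bounded below).

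Combining the two estimates yields the conclusion. There is no real obstacle here: the only point requiring a brief justification is that $f$ is continuous and bounded on $\T^N$, and this is immediate from compactness of the torus together with the uniform convergence of the continuous functions $f_n$.
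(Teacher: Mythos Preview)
Your proof is correct and follows essentially the same idea as the paper's: both exploit the uniform closeness of $f_n$ to $f$ (using that $\mu_n$ is a probability measure) together with narrow convergence applied to the fixed test function $f$. The paper phrases this via an $\eps$--argument and $\liminf/\limsup$ inequalities, while you give the equivalent two-term decomposition; there is no substantive difference.
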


\begin{proof}
Given $\eps > 0$, we obtain for $n$ large enough
\[
\int f_n \, d\mu_n \geq \int f \, d\mu_n - \eps,
\]
which implies
\[
\liminf_n \int f_n \, d\mu_n \geq \int f \, d\mu - \eps.
\]

The same argument, with obvious adaptations, gives a converse inequality
for the $\limsup$. The constant $\eps$ being arbitrary, the assertion follows.
\end{proof}

\medskip

\subsection{Assumptions on the Hamiltonians}\label{H0L0}

In Sections \ref{LLOO}--\ref{comparozzi}, we work with a non--autonomous
Hamiltonian
\[
H_0 : \T^N \times [0,T] \times \R^N \to \R,
\]
in Section \ref{fissa} (the fixed point argument), and Section \ref{generale}
(the generalized MFG system) time--dependence will be mediated by a curve of
probability measures on $\T^N$.

We assume that $H_0$ satisfies:

\begin{itemize}
  \item[{\bf (H1)}] continuity in all arguments;
  \item[{\bf (H2)}] convexity in $p$;
  \item[{\bf (H3)}] uniformly superlinearity in $p$ for $(x,t)$ varying in
  $\T^N \times [0,T]$;
\end{itemize}

These conditions will remain in force throughout the paper.

We denote by $L_0(x,t,q)$ the Lagrangian associated to $H_0$ via Fenchel
transform. Under {\bf (H1)}, {\bf (H2)}, {\bf (H3)} $L_0$ is continuous,
convex in the velocity variable $q$, and superlinear, uniformly for $(x,t)$
varying in $\T^N \times [0,T]$.

\smallskip

The superlinearity is equivalent to the existence of a function
$\theta : [0,+\infty) \to [0,+\infty)$ with
\begin{equation}\label{pettis}
L_0(x,t,q) \geq \theta(|q|)
\quad\hbox{for $(x,t) \in \T^N \times [0,T]$ and}\quad
\lim_{s \to +\infty} \frac{\theta(s)}{s} = +\infty.
\end{equation}

By the Corollary in \cite[Proposition 2.2.6]{Cla90} these assumptions imply
that the $H_0$ and $L_0$ are locally Lipschitz continuous in $p$ and $q$
uniformly in $(x,t)$.

\smallskip

\begin{Notation}\label{M0mL}
We set
\[
M_0 = \max \{ |L_0(x,t,0)| \mid (x,t) \in \T^N \times [0,T] \}, \quad
m_{L_0} = \min \{ L_0(x,t,q) \mid (x,t,q) \in \T^N \times [0,T] \times \R^N \}.
\]
\end{Notation}

\bigskip

\section{Minimal action and Lax--Oleinik formula}\label{LLOO}

\medskip

\subsection{Minimal action}

\begin{Definition}
Let $[t_1,t_2] \subset [0,T]$, and let $\zeta$ be a curve in $\T^N$ defined on
$[t_1,t_2]$, we define its {\em action} as
\begin{equation}\label{defAL}
\int_{t_1}^{t_2} L_0(\zeta,t,\dot\zeta) \, dt.
\end{equation}
If $\zeta$ is continuous, but not absolutely continuous, its action is set
to $+ \infty$.
\end{Definition}

\smallskip

To simplify notations, we write
\begin{equation}\label{defALbis}
A_{L_0}(\zeta) = \int_0^T L_0(\zeta,t,\dot\zeta) \, dt.
\end{equation}

\smallskip

\begin{Proposition}\label{sciare}
For any $a \in \R$, the set
\[
C = \{ \zeta \in \G \mid A_{L_0}(\zeta) \leq a \}
\]
is compact in the uniform topology.
\end{Proposition}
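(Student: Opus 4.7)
The plan is to apply Arzelà--Ascoli: I will show that $C$ is equicontinuous (hence pointwise bounded is automatic, since $\T^N$ is compact), and then that $C$ is closed in $\G$. The key input on both fronts is the uniform superlinearity condition \eqref{pettis}, which is the only assumption that can ever constrain curves of bounded action.

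For the equicontinuity step, I would first derive a quantitative bound on $\int_{t_1}^{t_2}|\dot\zeta|\,dt$ valid for every $\zeta\in C$. Replacing $L_0$ by $L_0-m_{L_0}\geq 0$ to avoid sign issues, superlinearity yields: for every $M>0$ there exists $K_M$ such that $L_0(x,t,q)-m_{L_0}\geq M|q|$ whenever $|q|\geq K_M$, and therefore
\[
|q|\;\leq\;K_M+\tfrac{1}{M}\bigl(L_0(x,t,q)-m_{L_0}\bigr)
\]
for \emph{every} $q$. Integrating on $[t_1,t_2]$ along any $\zeta\in C$,
\[
|\zeta(t_2)-\zeta(t_1)|\;\leq\;\int_{t_1}^{t_2}|\dot\zeta|\,dt\;\leq\;K_M(t_2-t_1)+\tfrac{a-m_{L_0}T}{M}.
\]
Given $\eps>0$, first pick $M$ so that $(a-m_{L_0}T)/M<\eps/2$, then $\delta=\eps/(2K_M)$; this produces a modulus of continuity independent of $\zeta$. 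The same estimate, applied to an arbitrary finite family of disjoint subintervals, gives equi--absolute continuity of $C$, which I will use in the closedness step.

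For closedness, let $\zeta_n\in C$ converge uniformly to some $\zeta\in\G$. The equi--absolute continuity just proved passes to the uniform limit, so $\zeta$ is AC. Moreover, the superlinearity--based inequality $|\dot\zeta_n|\leq K_M+\tfrac{1}{M}(L_0(\zeta_n,\cdot,\dot\zeta_n)-m_{L_0})$ shows that $\{\dot\zeta_n\}$ is equi--integrable in $L^1([0,T];\R^N)$; by the Dunford--Pettis theorem a subsequence converges weakly in $L^1$, and since $\zeta_n\to\zeta$ uniformly the weak limit can only be $\dot\zeta$. The continuity of $L_0$ and its convexity in $q$ ({\bf (H1)}, {\bf (H2)}) now allow one to invoke a standard Tonelli--Ioffe lower semicontinuity theorem for integral functionals with convex Carathéodory integrands, yielding
\[
A_{L_0}(\zeta)\;\leq\;\liminf_{n}A_{L_0}(\zeta_n)\;\leq\;a,
\]
so $\zeta\in C$.

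I expect the equicontinuity step to be entirely routine; the only point where care is needed is the lower semicontinuity of $A_{L_0}$ along uniformly convergent sequences with equi--integrable derivatives. The subtlety there is that uniform convergence alone is weak, so one genuinely needs the weak $L^1$ convergence of the velocities (obtained via Dunford--Pettis) together with convexity in $q$ to invoke the Ioffe--type theorem. Once closedness is in hand, combining it with equicontinuity and pointwise boundedness gives compactness of $C$ in the uniform topology via Arzelà--Ascoli.
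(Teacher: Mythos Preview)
Your proposal is correct and follows essentially the same route as the paper: superlinearity \eqref{pettis} yields equi--absolute continuity of $C$ (the paper cites \cite[Theorem~2.12]{BuGH98} for this, you prove it directly via the de la Vall\'ee Poussin--type bound), Ascoli--Arzel\`a then gives precompactness, and closedness follows from lower semicontinuity of the action. The only structural difference is that the paper defers the lsc step to a separate Proposition~\ref{presciare} (citing \cite[Theorem~3.6]{BuGH98}), whereas you fold the Dunford--Pettis/Ioffe argument into the proof of compactness itself; your version is more self-contained, the paper's is terser.
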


\begin{proof}
By \eqref{pettis} the curves in $C$ are equiabsolutely continuous, see
\cite[Theorem 2.12]{BuGH98}. Since they take values in the compact set $\T^N$,
they are also equibounded. The result follows by Ascoli--Arzel\`{a} theorem.
\end{proof}

\smallskip

\begin{Proposition}\label{presciare}
The functional
\[
\zeta \mapsto A_{L_0}(\zeta)
\]
is lsc (possibly $+\infty$) in $\G$ with respect to the uniform topology.
\end{Proposition}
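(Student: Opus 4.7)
The plan is to reduce to the classical lower–semicontinuity theorem for integral functionals whose integrand is continuous in $(x,t,q)$ and convex in $q$. Suppose $\zeta_n \to \zeta$ uniformly in $\G$; I want to show $\liminf_n A_{L_0}(\zeta_n) \ge A_{L_0}(\zeta)$. If this liminf is $+\infty$ there is nothing to prove, so passing to a subsequence I may assume it is finite and that $A_{L_0}(\zeta_n) \le a$ for some constant $a$ and every $n$.

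By Proposition \ref{sciare}, the $\zeta_n$ are then equiabsolutely continuous, hence in particular absolutely continuous with derivatives $\dot\zeta_n$ existing a.e. I next exploit the uniform superlinearity \eqref{pettis}: from
\[
\int_0^T \theta(|\dot\zeta_n|)\,dt \;\le\; A_{L_0}(\zeta_n) + T\,|m_{L_0}| \;\le\; a + T\,|m_{L_0}|
\]
and $\theta(s)/s \to +\infty$, the De la Vall\'ee Poussin criterion yields equi--integrability of $\{|\dot\zeta_n|\}$ in $L^1([0,T])$. By Dunford--Pettis, a further subsequence satisfies $\dot\zeta_n \rightharpoonup w$ weakly in $L^1([0,T];\R^N)$ for some $w$. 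Combining this weak convergence with the uniform convergence $\zeta_n \to \zeta$, passage to the limit in $\zeta_n(t)=\zeta_n(0)+\int_0^t \dot\zeta_n(s)\,ds$ identifies $w=\dot\zeta$ a.e.; in particular $\zeta$ is AC.

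The conclusion now follows from a standard lsc theorem for integral functionals of the form $\int L_0(\eta,t,\dot\eta)\,dt$ whose integrand is continuous and convex in the velocity variable (Ioffe's theorem; equivalently, Mazur's lemma applied to convex combinations of $\dot\zeta_n$ combined with Fatou's lemma and continuity of $L_0$ in $(x,t)$). Applied with the uniform (hence strong $L^1$) convergence of $\zeta_n$ and the weak $L^1$ convergence of $\dot\zeta_n$, it gives
\[
A_{L_0}(\zeta) \;\le\; \liminf_n A_{L_0}(\zeta_n),
\]
which is the desired inequality. The only real hurdle is verifying the hypotheses of the chosen lsc theorem: lower boundedness of $L_0$ is guaranteed by \eqref{pettis} via $m_{L_0}>-\infty$ (Notation \ref{M0mL}), while continuity of $L_0$ in $(x,t,q)$ and convexity in $q$ follow at once from assumptions {\bf (H1)}--{\bf (H3)}.
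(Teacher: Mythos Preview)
Your argument is correct and is essentially the standard route underlying the reference the paper invokes: the paper's own proof consists of a one--line citation of \cite[Theorem 3.6]{BuGH98}, relying on the continuity of $L_0$ and its convexity in $q$, which is exactly the lsc theorem you apply after extracting weak $L^1$ convergence of the velocities via De la Vall\'ee Poussin and Dunford--Pettis. In other words, you have unpacked the content of the cited theorem rather than taken a different path; the minor slack term $T|m_{L_0}|$ in your bound on $\int_0^T \theta(|\dot\zeta_n|)\,dt$ is unnecessary (since $\theta\ge 0$ already gives $\int \theta(|\dot\zeta_n|)\,dt \le A_{L_0}(\zeta_n)\le a$) but harmless.
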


\begin{proof}
This follows from \cite[Theorem 3.6]{BuGH98}, using the continuity of $L_0$
and its convexity in the velocity variable.
\end{proof}

\smallskip

Since any curve in $\G$ with finite action is AC, the Lebesgue differentiation
yields:

\begin{Proposition}\label{master}
Let $\zeta \in \G$ with $A_{L_0}(\zeta) < +\infty$, then the set
\[
C
=
\{ (x,t) \mid x = \zeta(t) \;\hbox{with}\;
t \;\hbox{diffe. point of}\; \zeta \;\hbox{and Lebesgue point of}\;
L_0(\zeta,t,\dot\zeta) \}
\]
satisfies $\mu_\zeta(C) = 1$.
\end{Proposition}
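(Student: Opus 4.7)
The plan is to unfold both the definition of the space--time occupation measure $\mu_\zeta$ and the definition of $C$, and show that the set of $t \in [0,T]$ such that $(\zeta(t),t) \in C$ has full $1$--dimensional Lebesgue measure. Then $\mu_\zeta(C) = 1$ follows directly from Definition \ref{spacetime}.

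The first step is to observe that since $A_{L_0}(\zeta) < +\infty$, the assumption \eqref{pettis} and the standard absolute continuity criterion (as used in Proposition \ref{sciare}) imply that $\zeta$ is absolutely continuous on $[0,T]$. Consequently, the set
\[
D_1 = \{ t \in [0,T] \mid \zeta \text{ is differentiable at } t \}
\]
has full Lebesgue measure in $[0,T]$, by the classical characterization of AC functions. At every such $t$, $\dot\zeta(t)$ is well defined, so the composition $t \mapsto L_0(\zeta(t), t, \dot\zeta(t))$ is defined a.e. and, by hypothesis, integrable on $[0,T]$.

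The second step is to apply the Lebesgue differentiation theorem to this integrable function (say, after modifying it arbitrarily on the null set where $\dot\zeta$ is undefined; this does not affect Lebesgue points). This yields that the set
\[
D_2 = \{ t \in [0,T] \mid t \text{ is a Lebesgue point of } L_0(\zeta,\cdot,\dot\zeta) \}
\]
has full Lebesgue measure as well. Hence $D := D_1 \cap D_2$ satisfies $|D| = T$.

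The third and final step is to read off the conclusion. By the very definition of $C$, one has $(\zeta(t),t) \in C$ if and only if $t \in D$. Applying Definition \ref{spacetime} with $B = C$,
\[
\mu_\zeta(C) = \frac{1}{T} \int_0^T \chi_{\{ t \mid (\zeta(t),t) \in C \}} \, dt = \frac{1}{T} \int_0^T \chi_D(t) \, dt = \frac{|D|}{T} = 1.
\]
There is no serious obstacle here: the only point requiring minor care is the harmless convention used to define $L_0(\zeta(t),t,\dot\zeta(t))$ on the null set where $\dot\zeta$ does not exist, which does not affect either integrability or the set of Lebesgue points.
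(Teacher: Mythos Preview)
Your proof is correct and follows exactly the approach indicated in the paper, which simply appeals to absolute continuity and the Lebesgue differentiation theorem. You have merely spelled out the details.
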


\smallskip

\begin{Definition}
A curve $\eta : [0,T] \to \T^N$ is said {\em of minimal action} in $[0,T]$ if
\[
A_{L_0}(\eta)
= \min \{ A_{L_0}(\zeta) \mid \zeta(0) = \eta(0), \ \zeta(T) = \eta(T) \}.
\]
\end{Definition}

\smallskip

As a consequence of Propositions \ref{sciare}, \ref{presciare}, we have:

\begin{Proposition}
For any $(x,y) \in \T^N \times \T^N$, there exist curves of minimal action
$\eta$ in $[0,T]$ with $\eta(0) = x$, $\eta(T) = y$.
\end{Proposition}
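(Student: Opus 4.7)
This is a standard application of the direct method in the calculus of variations, and the two preceding results (Propositions~\ref{sciare} and~\ref{presciare}) are exactly designed to make it go through. My plan is to show that the class of admissible curves is nonempty with finite action, take a minimizing sequence, extract a uniformly convergent subsequence via Proposition~\ref{sciare}, and pass to the limit via Proposition~\ref{presciare}.

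\textbf{Step 1 (nonemptiness and finiteness of the infimum).} Fix $x,y \in \T^N$ and pick any representatives in $\R^N$; define the constant-speed curve $\zeta_0(t) = x + \tfrac{t}{T}(y-x)$, viewed modulo $\Z^N$. Then $\zeta_0 \in \G$, $\zeta_0(0)=x$, $\zeta_0(T)=y$, and $|\dot\zeta_0|$ is bounded by $\mathrm{diam}(\T^N)/T$. By continuity of $L_0$ on the compact set $\T^N \times [0,T] \times \ov{B}(0,\mathrm{diam}(\T^N)/T)$, the action $A_{L_0}(\zeta_0)$ is finite. Hence the infimum
\[
\alpha := \inf\{ A_{L_0}(\zeta) \mid \zeta \in \G,\ \zeta(0)=x,\ \zeta(T)=y\}
\]
satisfies $m_{L_0}\cdot T \leq \alpha \leq A_{L_0}(\zeta_0) < +\infty$.

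\textbf{Step 2 (compactness of a minimizing sequence).} Choose $\zeta_n \in \G$ with $\zeta_n(0)=x$, $\zeta_n(T)=y$, and $A_{L_0}(\zeta_n) \to \alpha$. For all $n$ sufficiently large, $A_{L_0}(\zeta_n) \leq \alpha + 1$, so by Proposition~\ref{sciare} the sequence $\{\zeta_n\}$ lies in a compact subset of $\G$ for the uniform topology. Extract a subsequence (not relabeled) with $\zeta_n \to \eta$ uniformly on $[0,T]$.

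\textbf{Step 3 (admissibility and optimality of the limit).} Uniform convergence at the endpoints gives $\eta(0) = x$ and $\eta(T) = y$, so $\eta$ is admissible. Lower semicontinuity of $A_{L_0}$ (Proposition~\ref{presciare}) then yields
\[
A_{L_0}(\eta) \leq \liminf_{n} A_{L_0}(\zeta_n) = \alpha,
\]
and the reverse inequality $A_{L_0}(\eta) \geq \alpha$ is immediate from admissibility. Therefore $A_{L_0}(\eta) = \alpha$, proving that $\eta$ is a curve of minimal action joining $x$ to $y$.

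\textbf{Main obstacle.} There is no real obstacle beyond setting things up correctly: Propositions~\ref{sciare} and~\ref{presciare} provide compactness and lsc on all of $\G$, and restricting to the closed subset $\{\zeta(0)=x,\ \zeta(T)=y\}$ (closed because evaluation is continuous for the uniform topology) preserves both properties. The only mildly non-routine point is making sure the admissible class is not action-vacuous, which is handled in Step~1 by exhibiting an explicit curve of bounded velocity on the torus.
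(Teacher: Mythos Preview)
Your proof is correct and follows exactly the approach the paper has in mind: the paper states this proposition as an immediate consequence of Propositions~\ref{sciare} and~\ref{presciare} without further detail, and your Steps~1--3 simply spell out the standard direct-method argument that those two propositions are designed to support.
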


\smallskip

Given $x$, $y$ in $\T^N$, we define
\begin{equation}\label{defS}
S_T(x,y)
= \min \{ A_{L_0}(\zeta) \mid \zeta \ \hbox{with}\ \zeta(0) = x, \ \zeta(T) = y \}.
\end{equation}

\smallskip

\begin{Proposition}\label{conti}
The function
\[
(x,y) \mapsto S_T(x,y)
\]
is bounded and lsc.
\end{Proposition}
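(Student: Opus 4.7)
The plan is to verify boundedness and lower semicontinuity separately; both follow cleanly from the preliminary material of Section~\ref{LLOO}.

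For boundedness, the lower bound is immediate: since $L_0 \geq m_{L_0}$ pointwise (Notation~\ref{M0mL}), every competitor curve has action at least $m_{L_0} T$, hence $S_T(x,y) \geq m_{L_0} T$. For the upper bound, I would exhibit, for each pair $(x,y) \in \T^N \times \T^N$, an explicit competitor of uniformly bounded action. Lifting $x$ and $y$ to representatives in $\R^N$ with $|y-x| \leq \tfrac{\sqrt N}{2}$ (the diameter of $\T^N$) and setting $\zeta(t) = x + \tfrac{t}{T}(y-x)$ gives a curve with constant velocity $|\dot\zeta| \leq \tfrac{\sqrt N}{2T}$. By {\bf (H1)}, $L_0$ is bounded on the compact set $\T^N \times [0,T] \times \overline{B(0,\sqrt N/(2T))}$ by some constant $K$ independent of $x,y$, and so $A_{L_0}(\zeta) \leq KT$; this gives $S_T(x,y) \leq KT$.

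For lower semicontinuity, I would use the direct method. Fix $(x_0,y_0)$ and a sequence $(x_n,y_n) \to (x_0,y_0)$; I may assume $\ell := \liminf_n S_T(x_n,y_n) < +\infty$ (else there is nothing to prove) and, passing to a subsequence, that $S_T(x_n,y_n) \to \ell$ and stays bounded by some constant $a$. Choose, by definition of $S_T$, minimizing curves $\eta_n$ with $\eta_n(0) = x_n$, $\eta_n(T) = y_n$ and $A_{L_0}(\eta_n) = S_T(x_n,y_n) \leq a$. The compactness statement Proposition~\ref{sciare} applied to the sublevel $\{A_{L_0} \leq a\}$ yields, up to a further subsequence, uniform convergence $\eta_n \to \eta$ in $\G$. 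The endpoint evaluations are continuous in the uniform topology, so $\eta(0) = x_0$ and $\eta(T) = y_0$, making $\eta$ an admissible competitor for $S_T(x_0,y_0)$. The lower semicontinuity of $A_{L_0}$ in the uniform topology (Proposition~\ref{presciare}) then gives
\[
S_T(x_0,y_0) \leq A_{L_0}(\eta) \leq \liminf_n A_{L_0}(\eta_n) = \ell,
\]
which is the desired inequality.

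There is no real obstacle here: boundedness rests on the trivial curve plus continuity of $L_0$ on compacta, and lsc is a textbook direct-method argument whose two ingredients (equi-compactness of finite-action curves and lsc of the action) are precisely Propositions~\ref{sciare} and \ref{presciare}. The only mild care needed is in the lift to $\R^N$ for the upper-bound construction, to guarantee that the chosen representative curve has velocity bounded independently of $(x,y)$.
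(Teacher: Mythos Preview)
Your proof is correct and follows essentially the same route as the paper: boundedness via the trivial lower bound $m_{L_0}T$ and an explicit straight-line competitor, and lower semicontinuity via the direct method using Propositions~\ref{sciare} and~\ref{presciare}. Your upper-bound argument is in fact stated more carefully than the paper's (which writes the geodesic estimate somewhat loosely), but the underlying idea is identical.
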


\begin{proof}
Fix $(x_0,y_0)$ in $\T^N \times \T^N$. Joining $x_0$ to $y_0$ by a geodesic
defined in $[0,T]$, we find
\[
S_T(x_0,y_0)
\leq \frac{|x_0-y_0|}{T} \,
\max \{ |L_0(x,t,q)| \mid |q| = 1 \}.
\]

From Notation \ref{M0mL} we also have
\[
S_T(x_0,y_0) \geq m_{L_0} \, T.
\]

Thus $S_T(\cdot,\cdot)$ is bounded.

Now let $(x_n,y_n) \to (x_0,y_0)$, and let $\zeta_n$ be minimizers realizing
$S_T(x_n,y_n)$. The corresponding actions $A_{L_0}(\zeta_n)$ are uniformly
bounded, hence by Propositions \ref{sciare}, \ref{presciare} the $\zeta_n$'s
converge, up to subsequences, uniformly to $\zeta$ joining $x_0$ to $y_0$. Then
\[
\liminf S_T(x_n,y_n)
= \liminf A_{L_0}(\zeta_n)
\geq A_{L_0}(\zeta)
\geq S_T(x_0,y_0).
\]
This proves lower semicontinuity.
\end{proof}

\medskip

\subsection{HJ equation}

We consider the time--dependent Hamilton--Jacobi (HJ) equation
\begin{equation}\label{HJ-}\tag{HJ}
- u_t + H_0(x,t,-Du) = 0 \quad\hbox{in $ \T^N \times (0,T)$,}
\end{equation}
where solutions are understood in the viscosity sense.

\smallskip

A continuous solution to \eqref{HJ-} with final condition $g_0$ is given by the
Lax--Oleinik (LO) formula, see Appendix \ref{LxO}:
\begin{equation}\label{LO}
v(x,t)
= \min_{\zeta(t)=x}
\left\{
g_0(\zeta(T))
+ \int_t^T L_0(\zeta,s,\dot\zeta) \, ds
\right\}
\qquad\hbox{for $(x,t) \in \T^N \times [0,T)$.}
\end{equation}
The minimum exists by Propositions \ref{sciare}, \ref{presciare} and the continuity
of $g_0$.

\smallskip

\begin{Remark}\label{barlone}
Under assumptions {\bf (H1)}--{\bf (H3)} equation \eqref{HJ-} does not admit a unique
continuous solution in $\T^N \times [0,T]$, even with a continuous final datum $g_0$.
Uniqueness may be enforced, for instance, by requiring
\[
|H_0(x,t,p) - H_0(y,t,p)|
\leq \om_0((1+|p|) |x-y|)
\]
for some modulus $\om_0 : [0,+\infty) \to [0,+\infty)$ and all $x$, $y$ in $\T^N$,
$t \in [0,T]$, $p \in \R^N$, see e.g. \cite[Theorem 5.1]{Ba13}.

Uniqueness is not needed here: we will work exclusively with the Lax–Oleinik
solution \eqref{LO}, which represents the value function of the associated optimal
control problem.
\end{Remark}

\smallskip

\begin{Notation}\label{vvv}
We denote by $v$ the LO solution defined in \eqref{LO}.
\end{Notation}

\begin{Lemma}\label{sub}
For any $x$, $y$ in $\T^N$, $t_2 > t_1 \in [0,T]$, and any curve $\zeta$
in $[t_1,t_2]$ linking $x$ to $y$,
\[
v(x,t_1) - v(y,t_2)
\leq \int_{t_1}^{t_2} L_0(\zeta,t,\dot\zeta) \, dt.
\]
\end{Lemma}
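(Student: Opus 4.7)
The statement is the standard dynamic programming subsolution inequality for the Lax--Oleinik value function, and the natural strategy is concatenation plus admissibility in \eqref{LO}. If the action of $\zeta$ on $[t_1,t_2]$ is $+\infty$ (i.e.\ $\zeta$ is continuous but not AC), the inequality is trivial, so from the outset I would assume $\int_{t_1}^{t_2} L_0(\zeta,t,\dot\zeta)\,dt<+\infty$, which forces $\zeta$ to be AC on $[t_1,t_2]$.

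The plan is then as follows. First, by Proposition~\ref{sciare} and Proposition~\ref{presciare} together with the continuity of $g_0$, the minimum in \eqref{LO} at $(y,t_2)$ is attained by some AC curve $\eta:[t_2,T]\to\T^N$ with $\eta(t_2)=y$, so that
\[
v(y,t_2) = g_0(\eta(T)) + \int_{t_2}^{T} L_0(\eta,s,\dot\eta)\,ds.
\]
(For the boundary case $t_2=T$, one uses the convention $v(\cdot,T)=g_0$; the proof then reduces to using $\zeta$ itself as a competitor for $v(x,t_1)$.) Next, I concatenate $\zeta$ on $[t_1,t_2]$ with $\eta$ on $[t_2,T]$ into a single AC curve $\sigma:[t_1,T]\to\T^N$ with $\sigma(t_1)=x$ and $\sigma(T)=\eta(T)$. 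Since $\sigma$ is admissible in the minimization defining $v(x,t_1)$, formula \eqref{LO} gives
\[
v(x,t_1) \leq g_0(\sigma(T)) + \int_{t_1}^{T} L_0(\sigma,s,\dot\sigma)\,ds.
\]
Splitting the integral at $t_2$ and substituting the expression for $v(y,t_2)$ yields
\[
v(x,t_1) \leq \int_{t_1}^{t_2} L_0(\zeta,s,\dot\zeta)\,ds + v(y,t_2),
\]
which is the desired inequality.

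I do not expect any real obstacle: the only subtlety is the measurability/AC status of the concatenation $\sigma$, which is immediate because both pieces are AC with matching endpoint $\zeta(t_2)=y=\eta(t_2)$, and the only boundary issue is that \eqref{LO} is stated for $t\in[0,T)$, which is handled by the convention $v(\cdot,T)=g_0$.
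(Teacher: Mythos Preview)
Your proposal is correct and follows the standard concatenation argument; the paper actually states Lemma~\ref{sub} without proof, treating it as an immediate consequence of the Lax--Oleinik formula, so your write-up is exactly what one would supply to fill in the omitted details.
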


\smallskip

The choice of continuous final datum $g_0$ induces a natural decomposition of the
minimal action curves into (possibly overlapping) families.

\begin{Definition}\label{optifinal}
For a continuous function $g_0$ on $\T^N$, a curve $\zeta_0 : [0,T] \to \T^N$
is called {\it $g_0$--optimal} if
\[
g_0(\zeta_0(T)) + A_{L_0}(\zeta_0)
= \min \{ g_0(\zeta(T)) + A_{L_0}(\zeta) \mid
\zeta \in \G \ \hbox{with} \ \zeta(0) = \zeta_0(0) \}.
\]
Equivalently
\[
A_{L_0}(\zeta_0) = v(\zeta_0(0),0) - g_0(\zeta_0(T)).
\]
\end{Definition}

\smallskip

Existence of such curves comes from Propositions \ref{sciare}, \ref{presciare}
and the continuity of $g_0$.

\smallskip

\begin{Remark}\label{dinamo}
If $\zeta_0$ is $g_0$--optimal then for all $t \in [0,T)$
\[
v(\zeta_0(t),t)
= g_0(\zeta_0(T)) + \int_t^T L_0(\zeta_0,s,\dot\zeta_0) \, ds \txt{.}
\]
\end{Remark}

\smallskip

\begin{Proposition}\label{compattone}
Let $M > 0$ and let $\om$ be a modulus. Then the set of curves
$[0,T] \to \T^N$ that are $g_0$--optimal for some continuous function $g_0$
with
\begin{equation}\label{compattone0}
\max_{\T^N} |g_0(x)| \leq M,
\qquad
|g_0(x_1) - g_0(x_2)| \leq \om(|x_1-x_2|)
\quad\hbox{for any $x_1$, $x_2$ in $\T^N$}
\end{equation}
is compact in $\G$.
\end{Proposition}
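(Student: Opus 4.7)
The plan is to show sequential compactness by extracting convergent subsequences both from the curves $\zeta_n$ and from their associated final data $g_n$, and then identifying the limit curve as being optimal for the limit datum.

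\textbf{Step 1: Uniform action bound and extraction of a uniformly convergent subsequence of curves.} Let $(\zeta_n) \subset \G$ with each $\zeta_n$ being $g_n$--optimal for some continuous $g_n$ satisfying \eqref{compattone0} with the same $M$ and $\omega$. Comparing $\zeta_n$ with the constant competitor $\zeta \equiv \zeta_n(0)$ on $[0,T]$ in Definition \ref{optifinal}, one gets
\[
A_{L_0}(\zeta_n) \leq g_n(\zeta_n(0)) - g_n(\zeta_n(T)) + T M_0 \leq 2M + T M_0.
\]
Hence all actions are uniformly bounded, and Propositions \ref{sciare}, \ref{presciare} yield (up to subsequences) some $\zeta \in \G$ with $\zeta_n \to \zeta$ uniformly and $A_{L_0}(\zeta) \leq \liminf_n A_{L_0}(\zeta_n) < +\infty$.

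\textbf{Step 2: Extraction of a uniform limit of the final data.} The family $(g_n)$ is equibounded by $M$ and equicontinuous with modulus $\omega$, so by Ascoli--Arzelà we may assume, passing to a further subsequence, that $g_n \to g_\infty$ uniformly on $\T^N$ for some continuous $g_\infty$ which obviously still satisfies the bounds in \eqref{compattone0}.

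\textbf{Step 3: Stability of the LO value functions.} Denote by $v_n$ (resp.\ $v_\infty$) the Lax--Oleinik solution \eqref{LO} associated to $g_n$ (resp.\ $g_\infty$). Since the LO formulas share the same Lagrangian $L_0$ and differ only through the final datum, the obvious inequality
\[
|v_n(x,t) - v_\infty(x,t)| \leq \|g_n - g_\infty\|_\infty \qquad \text{for all } (x,t)
\]
gives $v_n \to v_\infty$ uniformly on $\T^N \times [0,T]$. Combined with $\zeta_n(0) \to \zeta(0)$ and continuity of $v_\infty$, this yields
\[
v_n(\zeta_n(0),0) \longrightarrow v_\infty(\zeta(0),0),
\qquad g_n(\zeta_n(T)) \longrightarrow g_\infty(\zeta(T)).
\]

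\textbf{Step 4: Identification of the limit as $g_\infty$--optimal.} Using the equivalent form of Definition \ref{optifinal}, $A_{L_0}(\zeta_n) = v_n(\zeta_n(0),0) - g_n(\zeta_n(T))$, so the actions actually converge:
\[
\lim_n A_{L_0}(\zeta_n) = v_\infty(\zeta(0),0) - g_\infty(\zeta(T)).
\]
Lower semicontinuity of $A_{L_0}$ (Proposition \ref{presciare}) gives $A_{L_0}(\zeta) \leq v_\infty(\zeta(0),0) - g_\infty(\zeta(T))$, while the LO formula \eqref{LO} automatically yields the reverse inequality. Thus $\zeta$ is $g_\infty$--optimal, and $g_\infty$ belongs to the admissible family, which establishes compactness.

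The only delicate ingredient is Step 3: one must be careful not to require a modulus on the value functions themselves (which depends on $H_0$), but only to pass $v_n(\zeta_n(0),0)$ to the limit. The trivial $L^\infty$ stability of the LO formula under perturbation of the final datum, together with continuity of $v_\infty$, makes this painless; all other steps are routine consequences of equicoercivity of the action and Ascoli--Arzelà.
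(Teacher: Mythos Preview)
Your proof is correct and follows essentially the same strategy as the paper's: extract uniformly convergent subsequences of both the curves and the final data, pass to the limit in the optimality identity $A_{L_0}(\zeta_n)=v_n(\zeta_n(0),0)-g_n(\zeta_n(T))$, and conclude via lower semicontinuity of the action. The only cosmetic differences are that you obtain the uniform action bound directly by comparison with the constant curve (the paper instead deduces it from equiboundedness of the $v_n$ after invoking the stability result), and you replace the reference to Proposition~\ref{stabia} by the elementary estimate $|v_n-v_\infty|\le\|g_n-g_\infty\|_\infty$, which is perfectly adequate here.
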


\begin{proof}
Let $\zeta_n$ be $g^0_n$--optimal curves for final data $g^0_n$ satisfying
\eqref{compattone0}. We have
\begin{equation}\label{compattone1}
v_n(\zeta_n(0),0) = g^0_n(\zeta_n(T)) + A_{L_0}(\zeta_n),
\end{equation}
where $v_n$ is the LO solution to \eqref{HJ-} agreeing with $g^0_n$ at $t=T$.
By \eqref{compattone0} the sequence $g^0_n$ uniformly converges in $\T^N$, up
to a subsequence, to a continuous function $g_0$ still satisfying
\eqref{compattone0}. Consequently, by Proposition \ref{stabia}, the solutions
$v_n$ uniformly converge in $\T^N \times [0,T]$, up to subsequences, to the
LO solution $v$ of \eqref{HJ-} with final datum $g_0$.

Up to extracting a subsequence, we therefore have that the $v_n$'s are
equibounded in $\T^N \times [0,T]$. This implies by \eqref{compattone1}
that the actions $A_{L_0}(\zeta_n)$ are equibounded as well. We can therefore
extract, by Proposition \ref{sciare}, a subsequence uniformly converging in
$[0,T]$ to a limit curve $\zeta_0$. Passing to the limit, as $n \to +\infty$,
in \eqref{compattone1} and taking into account Proposition \ref{presciare}
we get
\[
v(\zeta_0(0),0) - g_0(\zeta_0(T)) \geq A_{L_0}(\zeta_0),
\]
which implies by Lemma \ref{sub} that $\zeta_0$ is $g_0$--optimal. This
concludes the argument.
\end{proof}

\bigskip

\section{Optimal measures}\label{optical}

We introduce a notion parallel to Definition \ref{optifinal} for measures in
$\bP(\G)$:

\begin{Definition}\label{moptifinal}
Given a continuous function $g_0$ in $\T^N$, a measure $\xi \in \bP(\G)$ is
called {\it $g_0$--optimal} if it is supported on $g_0$--optimal curves.
\end{Definition}

\medskip

\subsection{Existence of $g_0$--optimal measures}\label{esiotti}

\begin{Proposition}\label{esisto}
For any continuous function $g_0$ and $\mu \in \bP(\T^N)$, there exist
$g_0$--optimal measures with initial distribution $\mu$.
\end{Proposition}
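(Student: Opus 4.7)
The plan is to construct $\xi$ by pushing forward $\mu$ through a Borel selector $s:\T^N\to \G$ that assigns to each starting point $x$ a particular $g_0$--optimal curve issuing from $x$. Then $\mathrm{ev}_0\#\xi = (\mathrm{ev}_0\circ s)\#\mu = \mathrm{id}\#\mu = \mu$, and $\xi$ is supported on $g_0$--optimal curves by construction.

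First I would check that for every $x \in \T^N$ the set
\[
K_x \;=\; \{\zeta \in \G \mid \zeta(0)=x,\ \zeta\ \text{is}\ g_0\text{--optimal}\}
\]
is non--empty. This is a direct consequence of the calculus--of--variations machinery already developed: the functional $\zeta \mapsto g_0(\zeta(T)) + A_{L_0}(\zeta)$ is lsc on the closed set $\{\zeta\in\G\mid \zeta(0)=x\}$ by Proposition \ref{presciare} and continuity of $g_0$, while Proposition \ref{sciare} grants compact sublevel sets. The direct method thus yields a minimizer, which is by definition $g_0$--optimal.

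Next I would consider the union
\[
K \;=\; \bigcup_{x\in\T^N} K_x,
\]
i.e.\ the set of all $g_0$--optimal curves. Since $g_0$ is continuous on the compact torus, it is bounded and admits a modulus of continuity, so Proposition \ref{compattone} applies directly and $K$ is compact in $\G$. The restricted evaluation map
\[
\mathrm{ev}_0 : K \to \T^N
\]
is continuous and, by the previous paragraph, surjective; its fibres are exactly the compact sets $K_x$.

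At this point I would invoke a standard Borel selection result for a continuous surjection between Polish spaces with compact fibres (for instance, the multifunction $x \mapsto K_x$ has closed graph in $\T^N \times K$ and hence, by Kuratowski--Ryll--Nardzewski applied to a Polish multifunction with compact values, admits a Borel selector). This produces a Borel map $s:\T^N \to K \subset \G$ with $s(x)\in K_x$ for every $x$, i.e.\ $\mathrm{ev}_0\circ s = \mathrm{id}_{\T^N}$. Setting $\xi = s\#\mu$ concludes the proof: $\xi \in \bP(\G)$ is supported on $K$, hence $g_0$--optimal in the sense of Definition \ref{moptifinal}, and $\mathrm{ev}_0\#\xi = \mu$ by functoriality of the push--forward.

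The only non--routine ingredient is the measurable selection step, but the compactness of $K$ together with the continuity of $\mathrm{ev}_0$ makes it immediate. An alternative route, avoiding selection altogether, would minimize $\xi\mapsto \int[g_0(\zeta(T))+A_{L_0}(\zeta)]\,d\xi$ over $\{\xi\in\bP(\G)\mid \mathrm{ev}_0\#\xi=\mu\}$ using lower semicontinuity and tightness (the admissible set is narrow--closed since $\mathrm{ev}_0$ is continuous, and non--empty because the section $x\mapsto$ constant curve lies in it); the infimum equals $\int v(x,0)\,d\mu$ by Lemma \ref{sub}, with equality forcing the minimizer to be concentrated on $g_0$--optimal curves. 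I would favor the selection argument for its transparency.
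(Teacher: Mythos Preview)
Your argument is correct and follows essentially the same route as the paper: both construct a Borel selector of the multifunction $x\mapsto\{g_0$--optimal curves starting at $x\}$ and push $\mu$ forward through it (the paper phrases this via disintegration, setting $\eta_x=\delta_{h(x)}$, but this is exactly $\xi=h\#\mu$). The only difference is how measurability of the multifunction is checked: the paper proves directly that $\cG_{g_0}^{-}(E)$ is closed for every closed $E\subset\G$ (Lemma~\ref{dead}), whereas you first invoke Proposition~\ref{compattone} to get compactness of the whole set $K$ of $g_0$--optimal curves and then read off the closed--graph property; both are short and equivalent here.
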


\smallskip

This existence result follows from measurable selection principles for
multivalued maps together with the disintegration Theorem
(Appendix \ref{disintegra0}).

\smallskip

We begin by defining the multivalued map $\cG_{g_0} : \T^N \to \G$ as
\begin{equation}\label{curvacce0}
\cG_{g_0}(x) = \{ \hbox{$g_0$--optimal curves starting at $x$} \}.
\end{equation}

\smallskip

By disintegration theory any $g_0$--optimal measure — if it exists — is, roughly
speaking, parametrized by the measurable families $\{ \eta_x \} \subset \bP(\G)$
with $\spt(\eta_x) \subset \cG_{g_0}(x)$, for each $x \in \T^N$. See Definition
\ref{misuramisura} for terminology.

\smallskip

More precisely:

\begin{Proposition}\label{ottimissimo}
Given any continuous function $g_0$, the existence of a $g_0$--optimal measure,
for any initial distribution $\mu$, is equivalent to the existence of a measurable
family $\{ \eta_x \} \subset \bP(\G)$ such that
\[
\spt \eta_x \subset \cG_{g_0}(x)
\txt{for $x$ varying in $\T^N$.}
\]
\end{Proposition}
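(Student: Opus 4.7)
The plan is to prove the equivalence in two directions, with the disintegration theorem (Appendix \ref{disintegra0}) as the bridge between measures on $\G$ and families of measures indexed by the starting point. The easier implication constructs a $g_0$-optimal measure from a given family by integration against $\mu$; the converse disintegrates a $g_0$-optimal measure associated to some concrete $\mu$ and then extends the resulting family from an almost-everywhere statement to a genuinely pointwise one via a measurable selection.

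For the implication ``family $\Rightarrow$ measure'', given a measurable family $\{\eta_x\}_{x \in \T^N}$ with $\spt \eta_x \subset \cG_{g_0}(x)$ and a target $\mu \in \bP(\T^N)$, I would define
\[
\xi(A) \;=\; \int_{\T^N} \eta_x(A)\, d\mu(x)
\qquad\hbox{for every Borel}\ A \subset \G.
\]
Measurability of the integrand is built into the definition of measurable family (Definition \ref{misuramisura}), and $\xi(\G)=1$ since each $\eta_x$ is a probability measure. The support condition forces $\eta_x$ to be concentrated on curves starting at $x$, so $\mathrm{ev}_0 \# \eta_x = \delta_x$; integrating against $\mu$ yields $\mathrm{ev}_0 \# \xi = \mu$. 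Finally, denoting by $F \subset \G$ the Borel set of $g_0$-optimal curves, $\eta_x(F) = 1$ for every $x$, whence $\xi(F) = 1$ and $\xi$ is $g_0$-optimal with initial distribution $\mu$.

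For the converse, I would fix once and for all a reference measure $\mu_0$ on $\T^N$ of full support (for instance the normalized Lebesgue/Haar measure) and, using the hypothesis, pick a $g_0$-optimal measure $\xi$ with initial distribution $\mu_0$. Applying the disintegration theorem along $\mathrm{ev}_0 : \G \to \T^N$ produces a measurable family $\{\eta_x\}$ with $\xi = \int \eta_x\, d\mu_0(x)$ and $\eta_x$ concentrated on $\mathrm{ev}_0^{-1}(x)$ for $\mu_0$-a.e.\ $x$. Since $\xi(F)=1$, the conditional measures satisfy $\eta_x(F) = 1$ for $\mu_0$-a.e.\ $x$, i.e. $\spt \eta_x \subset \cG_{g_0}(x)$ on a set of full $\mu_0$-measure. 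To obtain the support condition for \emph{every} $x \in \T^N$, I exploit the fact that by Propositions \ref{sciare}, \ref{presciare} (specializing the uniform bounds in Proposition \ref{compattone} to a single $g_0$) the set $F$ is compact in $\G$, so $\cG_{g_0}$ has closed graph and non-empty compact values. A standard measurable selection theorem (Kuratowski--Ryll-Nardzewski) then provides a Borel map $\sigma : \T^N \to \G$ with $\sigma(x) \in \cG_{g_0}(x)$, and redefining $\eta_x := \delta_{\sigma(x)}$ on the exceptional $\mu_0$-null set yields a measurable family satisfying the support condition pointwise on $\T^N$.

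The main technical point will be the transfer of the global property $\xi(F)=1$ through disintegration to the fibres, together with the simultaneous upgrade from a $\mu_0$-a.e.\ statement to a pointwise one. Both rely on the good topological character of $F$ —  compactness by Propositions \ref{sciare}, \ref{presciare}, \ref{compattone} — which is also precisely what makes the closed-graph measurable selection available in the final step.
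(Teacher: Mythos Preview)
Your forward direction (``family $\Rightarrow$ measure'') is essentially identical to the paper's: integrate the family against $\mu$, and read off the initial distribution and the support from the hypotheses on $\{\eta_x\}$.

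For the converse, the paper is much terser than you: it simply says ``if $\xi$ is a measure as in the statement, its disintegration via $\mathrm{ev}_0$ yields the required family.'' You have correctly noticed that disintegration alone only produces $\spt \eta_x \subset \cG_{g_0}(x)$ for $\xi(0)$-almost every $x$, not for every $x \in \T^N$ as the statement asks, and you repair this by (i) choosing a full-support reference measure $\mu_0$ and (ii) overwriting the exceptional $\mu_0$-null set using a Borel selector $\sigma(x) \in \cG_{g_0}(x)$. This is a genuine completion of the paper's sketch. Two remarks: first, the selection machinery you invoke (closed graph of $\cG_{g_0}$, existence of a Borel selector) is precisely what the paper develops immediately afterwards in Lemma \ref{dead} and Proposition \ref{uffaz} for the proof of Proposition \ref{esisto}, so in the paper's flow the converse of Proposition \ref{ottimissimo} is stated for conceptual symmetry but is never used, and making it fully rigorous amounts to what you wrote; second, once you are willing to use the selector $\sigma$, the disintegration step becomes superfluous --- setting $\eta_x = \delta_{\sigma(x)}$ for \emph{all} $x$ already gives the required family, which is exactly how the paper proceeds in the proof of Proposition \ref{esisto}. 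Your route is correct but slightly roundabout.
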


\begin{proof}
Assume that such a family $\{ \eta_x \}$ exists, and define
\[
\xi(B) = \int \eta_x(B) \, d\mu(x)
\qquad\hbox{for any Borel subset $B$ in $\G$,}
\]
then $\xi$ is a probability measure on $\G$ with $\xi(0) = \mu$ by Proposition
\ref{dis}, and
\[
\spt \xi \subset \overline{ \cup_x \spt \eta_x },
\]
so $\xi$ is supported on $g_0$--optimal curves.

Conversely, if $\xi$ is a measure as in the statement, its disintegration
via $\mathrm{ev}_0$ (Theorem \ref{disbis}) yields the required family.
\end{proof}

To construct such families $\{ \eta_x \}$, we need some preliminary material.

\smallskip

\begin{Definition}\label{uffa}
Let $X$, $Y$ be Polish spaces and $\cG : X \to Y$ a multivalued map. We say
that it is {\em measurable} if
\begin{itemize}
  \item[--] it has closed values;
  \item[--] for any closed subset $E$ of $Y$, the set
  \[
  \cG^-(E) := \{ x \in X \mid \cG(x) \cap E \neq \emptyset \}
  \]
  is Borel.
\end{itemize}
\end{Definition}

\begin{Proposition}\label{uffaz}
Every measurable multivalued map $\cG : X \to Y$ admits a measurable selection,
namely a measurable function $h : X \to Y$ with
\[
h(x) \in \cG(x)
\qquad\hbox{for any $x$.}
\]
\end{Proposition}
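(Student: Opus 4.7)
The plan is to invoke the classical Kuratowski--Ryll-Nardzewski construction: produce the selection $h$ as a pointwise (in fact uniform) limit of measurable ``approximate selections'' $h_n : X \to Y$ satisfying $d(h_n(x), \cG(x)) < 2^{-n}$ and $d(h_n(x), h_{n-1}(x)) < 2^{-n+1}$, so that completeness of a compatible metric $d$ on the Polish space $Y$ yields the limit, closedness of $\cG(x)$ places the limit inside $\cG(x)$, and measurability of each $h_n$ is preserved under pointwise limits.

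First I would verify that $\cG^-(U)$ is Borel for every open $U \subset Y$, not only for closed sets as in the definition. This follows because each open $U$ in the metric space $Y$ is $F_\sigma$, say $U = \bigcup_n F_n$ with $F_n$ closed, and since $\cG^-$ commutes with countable unions we have $\cG^-(U) = \bigcup_n \cG^-(F_n)$, Borel. Fixing a countable dense sequence $\{y_k\}_{k \geq 1}$ in $Y$ and a complete metric $d$, I would build $h_n$ inductively. A crude measurable $h_0$ is obtained by letting $h_0(x) = y_{k(x)}$ where $k(x)$ is the smallest index $k$ with $\cG(x) \cap B(y_k, 1) \neq \emptyset$; measurability of the level sets $\{k(x) = k\}$ follows from the preliminary observation. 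For the inductive step, set
$$A^n_k = \{ x \in X : d(h_{n-1}(x), y_k) < 2^{-n+1}\} \cap \cG^-\bigl(B(y_k, 2^{-n})\bigr),$$
which is Borel, and define $h_n(x) = y_k$ on $A^n_k \setminus \bigcup_{j < k} A^n_j$. This is a measurable function into $\{y_k\}$ by construction.

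The main delicate point is precisely the covering property $\bigcup_k A^n_k = X$, which is the calibration of constants on which the whole scheme rests: given $x$, the inductive hypothesis furnishes some $z \in \cG(x)$ with $d(z, h_{n-1}(x)) < 2^{-n+1}$, and density of $\{y_k\}$ supplies a $y_k$ within any prescribed $\eps > 0$ of $z$. Choosing $\eps < \min\bigl(2^{-n},\, 2^{-n+1} - d(z, h_{n-1}(x))\bigr)$ ensures simultaneously $z \in B(y_k, 2^{-n})$ and $d(h_{n-1}(x), y_k) < 2^{-n+1}$, so such an index $k$ puts $x$ in $A^n_k$. With the sequence $h_n$ in hand, the second property makes $(h_n(x))$ Cauchy uniformly in $x$, so the pointwise limit $h(x)$ exists by completeness, is measurable as a pointwise limit of measurable functions, and lies in $\cG(x)$ by closedness together with $d(h_n(x), \cG(x)) \to 0$.
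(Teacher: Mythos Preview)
Your argument is correct: this is precisely the classical Kuratowski--Ryll-Nardzewski construction, and the calibration of constants in the covering step is handled properly. The paper does not actually prove this proposition; it simply cites \cite[Theorem 3.6]{CV77}, treating measurable selection as a standard black box. Your proposal therefore supplies the full proof that the paper defers to the literature, and the approach you give is essentially the one underlying the cited reference.
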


\begin{proof}
See \cite[Theorem 3.6]{CV77}.
\end{proof}

\smallskip

\begin{Lemma}\label{dead}
The multivalued map $\cG_{g_0}$, defined in \eqref{curvacce0}, is measurable
for any continuous final datum $g_0$.
\end{Lemma}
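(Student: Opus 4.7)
The plan is to prove that $\cG_{g_0}$ is upper semicontinuous with compact values, which immediately implies Borel (indeed closed) preimages of closed sets.

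First I would verify that each $\cG_{g_0}(x)$ is compact in $\G$. For any $\zeta \in \cG_{g_0}(x)$ the identity $A_{L_0}(\zeta) = v(\zeta(0),0) - g_0(\zeta(T))$ together with the boundedness of $v$ and $g_0$ gives a uniform bound on the action, so $\cG_{g_0}(x)$ is contained in a sublevel set of $A_{L_0}$, compact by Proposition \ref{sciare}. Closedness of $\cG_{g_0}(x)$ inside this sublevel set follows from the lower semicontinuity of $A_{L_0}$ (Proposition \ref{presciare}): if $\zeta_n \to \zeta$ uniformly with each $\zeta_n$ $g_0$-optimal starting at $x$, then
\[
v(x,0) = g_0(\zeta_n(T)) + A_{L_0}(\zeta_n) \geq g_0(\zeta(T)) + A_{L_0}(\zeta)
\]
in the limit (using continuity of $g_0$ and of the evaluation map, together with lsc of $A_{L_0}$), and the reverse inequality is the LO formula / Lemma \ref{sub}, so $\zeta \in \cG_{g_0}(x)$.

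Next, I would establish the sequential upper semicontinuity: if $x_n \to x$ in $\T^N$ and $\zeta_n \in \cG_{g_0}(x_n)$, a subsequence converges uniformly to some $\zeta \in \cG_{g_0}(x)$. Exactly the same action bound applies (since $v$ is bounded on $\T^N \times [0,T]$ by the Appendix on LO solutions), so Proposition \ref{sciare} yields a subsequence $\zeta_{n_k} \to \zeta$, with $\zeta(0) = x$. Using continuity of $v(\cdot,0)$ on the left-hand side and continuity of $g_0 \circ \mathrm{ev}_T$ plus lsc of $A_{L_0}$ on the right-hand side, passing to the limit in
\[
v(x_{n_k},0) = g_0(\zeta_{n_k}(T)) + A_{L_0}(\zeta_{n_k})
\]
gives $v(x,0) \geq g_0(\zeta(T)) + A_{L_0}(\zeta)$, and Lemma \ref{sub} yields the opposite inequality, so $\zeta$ is $g_0$-optimal from $x$.

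Finally, measurability follows: given any closed $E \subset \G$, take a sequence $x_n \in \cG_{g_0}^-(E)$ with $x_n \to x$ and pick $\zeta_n \in \cG_{g_0}(x_n) \cap E$; the previous step produces a subsequential limit $\zeta \in \cG_{g_0}(x)$, which lies in $E$ because $E$ is closed, so $x \in \cG_{g_0}^-(E)$. Hence $\cG_{g_0}^-(E)$ is closed, in particular Borel, and combined with compact-valuedness this is Definition \ref{uffa}. The main conceptual point — and the only place some care is required — is the upper semicontinuity step, since one must combine three distinct facts (continuity of the LO value function, lsc of the action, and compactness coming from the superlinearity bound) to identify the candidate limit curve as $g_0$-optimal; everything else is straightforward bookkeeping.
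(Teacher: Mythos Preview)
Your proof is correct and follows essentially the same route as the paper: both arguments show that $\cG_{g_0}^-(E)$ is closed for every closed $E\subset\G$ by taking $x_n\to x_0$ with $\zeta_n\in\cG_{g_0}(x_n)\cap E$, using the uniform action bound coming from \eqref{dead2} together with Proposition~\ref{sciare} to extract a uniformly convergent subsequence, and then passing to the limit via Proposition~\ref{presciare} and the continuity of $v(\cdot,0)$ and $g_0$ to conclude that the limit curve is $g_0$--optimal. Your version is slightly more explicit in isolating compact-valuedness and upper semicontinuity as intermediate steps, but the substance is identical.
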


\begin{proof}
It is clear that $\cG_{g_0}$ is closed valued.

Given a closed subset $E \subset \G$, let $x_n$ be a sequence in $\cG_{g_0}^-(E)$
converging to $x_0 \in \T^N$, and let $\zeta_n \in \cG_{g_0}(x_n) \cap E$.
The curves $\zeta_n$ then satisfy
\begin{equation}\label{dead2}
v(x_n,0) = g_0(\zeta_n(T)) + A_{L_0}(\zeta_n).
\end{equation}

Since the action of the $\zeta_n$'s is bounded, they uniformly converge,
up to a subsequence, by Proposition \ref{sciare} to a curve linking $x_0$
to some $y_0$, which is optimal by Proposition \ref{presciare}. Passing to the
limit, as $n$ goes to infinity, in \eqref{dead2} we get that $\cG_g^-(E)$
is closed, which yields the assertion.
\end{proof}

\smallskip

\begin{proof}[{\it Proof of Proposition \ref{esisto}}]
Using Proposition \ref{ottimissimo} it suffices to show the existence of a
family of measures $\{ \eta_x \}$ in $\bP(\G)$ as in the statement of
Proposition \ref{ottimissimo}.

By Lemma \ref{dead} and Proposition \ref{uffaz} there exists a measurable
selection $h(x) \in \cG_{g_0}(x)$. Define
\[
\eta_x = \delta_{h(x)} \in \bP(\G),
\]
where $\de$ stands for the Dirac measure. This family is measurable
(Definition \ref{misuramisura}) because $h$ is so. Since $h$ is a selection
of $\cG_{g_0}$, each $\eta_{x}$ is supported by $g_0$--optimal curves
starting at $x$.
\end{proof}

\medskip

\subsection{Intrinsic characterization of $g_0$--optimal measures}

We now provide intrinsic characterizations of $g_0$--optimal measures by lifting
both the action functional to $\bP(\G)$ and the cost $S_T$ to
$\bP(\T^N\times\T^N)$.

\smallskip

Some preliminary notions are needed:

\begin{Definition}
Given $\mu$, $\nu$ in $\bP(\T^N)$, we call {\em transport plan} between them
any Borel probability measure in $\T^N \times \T^N$ with first marginal $\mu$
and second marginal $\nu$.
\end{Definition}

\smallskip

We define
\[
S_T(\mu,\nu)
= \inf_\ga \int S_T(x,y) \, d\ga
\txt{$\mu$, $\nu$ in $\bP(\T^N)$,}
\]
where the infimum is taken over all the transport plans $\ga$ between $\mu$
and $\nu$.

\smallskip

\begin{Proposition}\label{buono}
For any $\mu, \nu \in \bP(\T^N)$ there exist transport plans achieving the infimum
in the definition of $S_T(\mu,\nu)$.
\end{Proposition}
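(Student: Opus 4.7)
The plan is to apply the direct method of the calculus of variations to the functional $\gamma \mapsto \int S_T(x,y)\, d\gamma$ on the set of transport plans between $\mu$ and $\nu$. The two ingredients needed are compactness of this set in the narrow topology and lower semicontinuity of the functional.

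First, I would take a minimizing sequence $\gamma_n$ of transport plans between $\mu$ and $\nu$, so that
\[
\int S_T(x,y) \, d\gamma_n \longrightarrow S_T(\mu,\nu).
\]
Since each $\gamma_n$ is a probability measure on the compact space $\T^N \times \T^N$, the family is automatically tight, so by Prokhorov's theorem there exists a subsequence (not relabeled) converging narrowly to some $\gamma_0 \in \bP(\T^N \times \T^N)$. A routine check on projections — or equivalently, narrow continuity of the marginal maps — shows that the first and second marginals of $\gamma_0$ are still $\mu$ and $\nu$, so $\gamma_0$ is admissible.

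Next, by Proposition \ref{conti} the cost $S_T$ is bounded and lsc on $\T^N \times \T^N$, and hence it is bounded from below. Applying Proposition \ref{paglia} with $X = \T^N \times \T^N$ and $f = S_T$ gives
\[
\liminf_n \int S_T \, d\gamma_n \geq \int S_T \, d\gamma_0,
\]
and combining this with the fact that $\gamma_n$ was minimizing yields $\int S_T \, d\gamma_0 \leq S_T(\mu,\nu)$, so the infimum is attained at $\gamma_0$.

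There is no real obstacle here: the compactness of $\T^N$ makes tightness free, and the lsc plus lower boundedness of $S_T$ from Proposition \ref{conti} is exactly what Proposition \ref{paglia} needs. The only minor point requiring attention is the stability of the marginal constraint under narrow convergence, which is immediate from testing against continuous functions $\varphi(x)$ and $\psi(y)$ on the two factors.
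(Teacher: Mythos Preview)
Your proof is correct and follows exactly the same approach as the paper: lower semicontinuity of the functional from Proposition~\ref{paglia} applied to the lsc cost $S_T$ (Proposition~\ref{conti}), combined with compactness of the set of transport plans between $\mu$ and $\nu$. The paper simply states these two ingredients without spelling out the minimizing-sequence argument or the marginal-stability check, but the content is identical.
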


\begin{proof}
Since the cost $S_T$ is lsc (Proposition \ref{conti}), the functional
\[
\ga \mapsto \int S_T(x,y) \, d\ga
\]
is lsc in the narrow topology by Proposition \ref{paglia}. Since the family
of transport plans between $\mu$ and $\nu$ is compact, minimizers exist.
\end{proof}

\smallskip

We derive from Lemma \ref{sub}

\begin{Proposition}\label{narduz}
Let $\mu, \nu \in \bP(\T^N)$, and let $v$ be the LO solution of \eqref{HJ-}
with final datum $g_0$. Then
\[
S_T(\mu,\nu) \geq \langle \mu, v(\cdot,0) \rangle - \langle \nu, g_0 \rangle.
\]
\end{Proposition}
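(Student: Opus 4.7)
The plan is to upgrade the pointwise inequality contained in Lemma \ref{sub} to the measure-level statement by integrating against any transport plan and then taking the infimum. The proof is essentially routine once the right pointwise bound is in place.

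First, I would specialize Lemma \ref{sub} to $t_1=0$, $t_2=T$. Since the LO formula \eqref{LO} gives $v(\cdot,T) = g_0$, the lemma yields, for every $x,y \in \T^N$ and every curve $\zeta \in \G$ with $\zeta(0)=x$, $\zeta(T)=y$,
\[
v(x,0) - g_0(y) \leq \int_0^T L_0(\zeta,t,\dot\zeta)\,dt = A_{L_0}(\zeta).
\]
Taking the infimum over all such curves and using the definition \eqref{defS} of $S_T(x,y)$ produces the pointwise estimate
\[
v(x,0) - g_0(y) \leq S_T(x,y) \qquad \text{for all } (x,y) \in \T^N \times \T^N.
\]

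Next, I would fix an arbitrary transport plan $\gamma$ between $\mu$ and $\nu$ and integrate the inequality above against $\gamma$. Using the fact that the first marginal of $\gamma$ is $\mu$ and the second marginal is $\nu$, together with the change of variable formula of Proposition \ref{change} applied to the projections, the left-hand side rewrites as $\langle \mu,v(\cdot,0)\rangle - \langle \nu, g_0\rangle$. This gives
\[
\int S_T(x,y)\,d\gamma \geq \langle \mu,v(\cdot,0)\rangle - \langle \nu,g_0\rangle.
\]
Taking the infimum over all admissible transport plans $\gamma$ then yields the claimed inequality.

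I do not foresee a real obstacle. The only points that deserve a brief remark are: (i) the boundedness/continuity of $v(\cdot,0)$ and $g_0$ on the compact torus $\T^N$, which ensures that the integrals $\langle \mu,v(\cdot,0)\rangle$ and $\langle \nu,g_0\rangle$ are finite and make sense; and (ii) the fact that $S_T$ is bounded below by $m_{L_0}\,T$ (see Proposition \ref{conti}), so $\int S_T\,d\gamma$ is well defined in $\R\cup\{+\infty\}$ and the integration step is legitimate. Everything else is direct from Lemma \ref{sub} and the definition of $S_T(\mu,\nu)$.
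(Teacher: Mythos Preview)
Your proposal is correct and follows essentially the same approach as the paper's proof: derive the pointwise inequality $S_T(x,y)\ge v(x,0)-g_0(y)$ from Lemma~\ref{sub}, integrate against an arbitrary transport plan, and take the infimum. The paper compresses these steps into a single displayed line, but the argument is identical.
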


\begin{proof}
Let $\ga$ be a transport plan between $\mu$ and $\nu$. By Lemma \ref{sub}
\[
\int S_T(x,y) \, d\ga
\geq \int [v(x,0) - g_0(y)] \, d\ga
= \langle \mu, v(\cdot,0) \rangle - \langle \nu, g_0 \rangle.
\]
Hence the claim.
\end{proof}

\smallskip

By Proposition \ref{presciare} and the coercivity of $L_0$, we have:

\begin{Proposition}\label{semi}
The action functional
\[
\zeta \mapsto A_{L_0}(\zeta)
\quad \zeta \in \G
\]
is integrable (with possibly infinite integral) against any measure in
$\bP(\G)$.
\end{Proposition}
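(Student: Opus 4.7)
The plan is to reduce the claim to two standard facts about $A_{L_0}$: Borel measurability and a uniform lower bound. Once these are in hand, the integrability assertion (in the extended sense $(-\infty,+\infty]$) is automatic for every probability measure on $\G$.

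For measurability, I would invoke Proposition~\ref{presciare}, which gives that $A_{L_0} : \G \to \R \cup \{+\infty\}$ is lsc with respect to the uniform topology. Since $\G$ is Polish, every lsc function is Borel measurable (its sublevel sets $\{A_{L_0} \leq a\}$ are closed, hence Borel). This handles the measurability side.

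For the lower bound, I would use Notation~\ref{M0mL}: by superlinearity, $L_0$ attains a finite minimum $m_{L_0}$ on $\T^N \times [0,T] \times \R^N$. Hence, for any AC curve $\zeta \in \G$,
\[
A_{L_0}(\zeta)
= \int_0^T L_0(\zeta,t,\dot\zeta)\, dt
\geq m_{L_0}\, T,
\]
while if $\zeta$ is continuous but not AC, then by definition $A_{L_0}(\zeta) = +\infty \geq m_{L_0}\, T$. Thus $A_{L_0} - m_{L_0} T \geq 0$ on all of $\G$.

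Combining, the function $A_{L_0} - m_{L_0} T$ is a non--negative Borel map on $\G$, so its integral against any $\xi \in \bP(\G)$ is well defined in $[0,+\infty]$ by standard measure theory. Consequently $\int A_{L_0}\, d\xi$ is well defined in $[m_{L_0}T,+\infty]$, which is exactly the claimed integrability. There is no real obstacle here: the statement is a bookkeeping consequence of Proposition~\ref{presciare} and the coercivity bound built into {\bf (H3)}; its role is simply to justify later that the lifted action functional $\xi \mapsto \int A_{L_0}\, d\xi$ can be considered on all of $\bP(\G)$.
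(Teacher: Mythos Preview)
Your proof is correct and matches the paper's approach exactly: the paper simply states the proposition as a consequence of Proposition~\ref{presciare} (lsc, hence Borel measurability) and the coercivity of $L_0$ (the uniform lower bound via $m_{L_0}$), without spelling out the details. You have merely made explicit what the paper leaves implicit.
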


\smallskip

Thus, for $\xi \in \bP(\Gamma)$, we may define its action as
\[
\int A_{L_0}(\zeta) \, d\xi(\zeta).
\]

\smallskip

Next, we relate transport plans and the action of measures in $\bP(\G)$:

\begin{Proposition}\label{pizza}
For any $\xi \in \bP(\Gamma)$
\[
\int A_{L_0}(\zeta) \, d\xi \geq S_T(\xi(0),\xi(T)).
\]
\end{Proposition}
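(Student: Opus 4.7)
The plan is to exhibit an explicit transport plan between $\xi(0)$ and $\xi(T)$ built directly from $\xi$, and then to compare the lifted action with the integral of $S_T$ against this plan. If the left-hand side is $+\infty$ the statement is trivial, so I can (and in fact need not) assume the action integral is finite.

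First, I would introduce the endpoint map
\[
\pi : \G \to \T^N \times \T^N, \qquad \pi(\zeta) = (\zeta(0),\zeta(T)),
\]
which is continuous (hence Borel) on $\G$ endowed with the uniform topology, since it is the product of $\mathrm{ev}_0$ and $\mathrm{ev}_T$. I would then set $\gamma := \pi \# \xi \in \bP(\T^N \times \T^N)$ and check that its first and second marginals are $\mathrm{ev}_0 \# \xi = \xi(0)$ and $\mathrm{ev}_T \# \xi = \xi(T)$ respectively, exactly as in the computation carried out in the proof of Lemma \ref{lemfranciosa1}. Thus $\gamma$ is an admissible transport plan in the definition of $S_T(\xi(0),\xi(T))$.

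The second step is to apply the change of variable Proposition \ref{change} with $X = \G$, $Y = \T^N \times \T^N$, $\mu = \xi$, $G = \pi$ and $\varphi = S_T$. Note that $\varphi$ is Borel (in fact lsc by Proposition \ref{conti}) and bounded below by $m_{L_0} T$, so the hypotheses of Proposition \ref{change} are met. This yields
\[
\int S_T(x,y) \, d\gamma(x,y) = \int S_T(\zeta(0),\zeta(T)) \, d\xi(\zeta).
\]

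The third, pointwise, step is the key inequality: for every $\zeta \in \G$,
\[
S_T(\zeta(0),\zeta(T)) \leq A_{L_0}(\zeta).
\]
Indeed, if $A_{L_0}(\zeta) = +\infty$ there is nothing to prove; otherwise $\zeta$ is AC and therefore a competitor in \eqref{defS}, so the inequality follows immediately from the definition of $S_T$.

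Finally, integrating this pointwise inequality against $\xi$ and chaining with the change of variable identity gives
\[
\int A_{L_0}(\zeta) \, d\xi \geq \int S_T(\zeta(0),\zeta(T)) \, d\xi = \int S_T(x,y) \, d\gamma \geq S_T(\xi(0),\xi(T)),
\]
the last inequality being the definition of $S_T(\xi(0),\xi(T))$ as an infimum over transport plans, applied to the particular plan $\gamma$. There is no genuine obstacle here: the argument is essentially a disintegration/pushforward bookkeeping, and the only point worth double-checking is the applicability of Proposition \ref{change}, which is guaranteed by the lower bound $S_T \geq m_{L_0} T$ established in Proposition \ref{conti}.
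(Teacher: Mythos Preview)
Your proof is correct and follows essentially the same route as the paper: push $\xi$ forward via the endpoint map $\zeta \mapsto (\zeta(0),\zeta(T))$ to obtain a transport plan between $\xi(0)$ and $\xi(T)$, then use the pointwise bound $A_{L_0}(\zeta) \geq S_T(\zeta(0),\zeta(T))$. The only cosmetic difference is that the paper passes through the disintegration of $\xi$ with respect to $\mathrm{ev}_{(0,T)}$ to rewrite the action integral, whereas you apply the change-of-variable formula directly; your version is marginally more elementary since it avoids invoking the disintegration theorem.
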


\begin{proof}
Let $\eta_{(x,y)}$ be the disintegration of $\xi$ with respect to
\begin{equation}\label{defF}
\mathrm{ev}_{(0,T)} : \Gamma \to \T^N \times \T^N,
\qquad
\mathrm{ev}_{(0,T)}(\zeta) = (\zeta(0),\zeta(T)).
\end{equation}

Then $\mathrm{ev}_{(0,T)} \# \xi$ is a transport plan between $\xi(0)$ and
$\xi(T)$, and
\begin{align}\label{preoptimum}
\int A_{L_0}(\zeta)\, d\xi
&= \int \Bigl( \int A_{L_0}(\zeta) \, d\eta_{(x,y)} \Bigr)
  d(\mathrm{ev}_{(0,T)} \# \xi)(x,y) \\
&\geq \int S_T(x,y) \, d(\mathrm{ev}_{(0,T)} \# \xi)(x,y)
 \geq S_T(\xi(0),\xi(T)), \nonumber
\end{align}
where the leftmost equality is obtained by the very definition of disintegration.
\end{proof}

\smallskip

We need Kantorovitch Theorem in the following form:

\begin{Theorem}\label{kanton}
Given $\mu, \nu \in \bP(\T^N)$, there exists a continuous function
$g_0 : \T^N \to \R$ such that the LO solution $v$ of \eqref{HJ-}, with final
datum $g_0$, satisfies
\[
S_T(\mu,\nu) = \langle \mu, v(\cdot,0) \rangle - \langle \nu, g_0 \rangle.
\]
\end{Theorem}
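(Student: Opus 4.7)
The statement is Kantorovich duality for the cost $S_T$, dressed in Hamilton--Jacobi clothing. The bridge is that, for any continuous $g_0$, the Lax--Oleinik formula reduces to a $c$--transform:
\[
v(x,0)
= \min_{\zeta(0)=x}\{g_0(\zeta(T)) + A_{L_0}(\zeta)\}
= \min_{y\in\T^N}\{g_0(y) + S_T(x,y)\},
\]
obtained by splitting admissible curves at their endpoints and invoking the definition \eqref{defS} of $S_T$. Thus $v(\cdot,0)$ is exactly the $c$--transform $g_0^c$ of $g_0$ with cost $c = S_T$, and Proposition \ref{narduz} is precisely the admissibility of the pair $(g_0^c,g_0)$ in the Kantorovich dual. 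The task therefore reduces to producing $g_0 \in C(\T^N)$ realizing equality in the duality.

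My plan would be: (i) invoke the standard Kantorovich duality for the bounded lsc cost $S_T$ on the compact Polish product $\T^N \times \T^N$,
\[
S_T(\mu,\nu) = \sup\bigl\{\langle\mu,\varphi\rangle - \langle\nu,\psi\rangle : \varphi,\psi \text{ bounded Borel},\ \varphi(x)-\psi(y) \leq S_T(x,y)\bigr\};
\]
(ii) reduce the supremum to pairs of the form $(g^c,g)$, by noting that replacing $\varphi$ with $\psi^c$ and then $\psi$ with the $c$--conjugate of $\psi^c$ never decreases the dual functional; (iii) extract a \emph{continuous} maximizer $g_0$ by compactness, and read off $g_0^c = v(\cdot,0)$ from the opening identity.

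Step (iii) is the nontrivial one. I would take a maximizing sequence $g_n$ in $c$--concave form, normalized by additive constants so that, say, $g_n(x_0) = 0$ at a fixed base point. Equicontinuity follows once $S_T(x,y)$ is shown to admit a uniform modulus of continuity in each variable, which I would produce by curve modification: given a near--minimizer $\zeta$ of $S_T(x_1,y_1)$, construct a competitor for $S_T(x_2,y_2)$ by splicing in short linear arcs on $[0,\delta]$ and $[T-\delta,T]$ that connect the perturbed endpoints to the interior of $\zeta$, choosing $\delta$ as a suitable power of $|x_2-x_1|+|y_2-y_1|$ so that the added velocities stay moderate and the extra action is controlled through the continuity of $L_0$, the uniform superlinearity \eqref{pettis}, and the equi--absolute continuity of bounded--action curves (Proposition \ref{sciare}). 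The $c$--concave functions $g_n$ and their transforms $g_n^c$ automatically inherit this modulus, so Ascoli--Arzel\`a yields a subsequence with $g_n \to g_0$ and $g_n^c \to g_0^c$ uniformly in $\T^N$; the pair $(g_0^c,g_0)$ realizes the dual supremum, and the opening identity identifies $g_0^c = v(\cdot,0)$.

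The main obstacle is exactly the modulus--of--continuity estimate for $S_T$ underlying step (iii). The abstract Kantorovich theorem alone produces maximizers only in the bounded Borel class; upgrading to $C(\T^N)$ requires exploiting the optimal--control structure behind $S_T$ (perturbation of near--minimizers with controlled action increment) rather than the bare lsc of Proposition \ref{conti}. Some care is also needed in step (ii), to ensure that successive $c$--conjugations produce continuous representatives, but this follows automatically once the modulus for $S_T$ is available.
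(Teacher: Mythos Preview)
Your route is genuinely different from the paper's. The paper does \emph{not} prove or use any modulus of continuity for $S_T$; it only has the lsc statement of Proposition~\ref{conti}. Instead it approximates $S_T$ from below by continuous costs, first establishes the duality for finitely supported $\mu,\nu$ via a finite--dimensional Hahn--Banach argument (where the approximating continuous costs suffice to construct a continuous $g_0$), and then passes to general measures by density together with the stability of LO solutions (Proposition~\ref{stabia}). So the paper's argument is designed precisely to sidestep the continuity question you place at the center of yours.

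The gap in your plan is step~(iii). Your splicing construction produces linear arcs on $[0,\delta]$ with speed of order $(|\zeta(\delta)-\zeta(0)|+r)/\delta$, where $r=|x_2-x_1|+|y_2-y_1|$. Equi--absolute continuity (Proposition~\ref{sciare}) gives only $|\zeta(\delta)-\zeta(0)|\le\omega(\delta)$ for a modulus $\omega$, so the splice speed behaves like $\omega(\delta)/\delta + r/\delta$. No choice $\delta=r^\alpha$ keeps this bounded unless $\omega$ is essentially Lipschitz at $0$, which \textbf{(H1)}--\textbf{(H3)} do not guarantee. Worse, even if you allow the speed to blow up and only ask that the added action $\delta\cdot L_0(\cdot,\cdot,q_\delta)$ tend to zero, you need an upper growth bound on $L_0$ in $q$ to balance the growth of $|q_\delta|$; \textbf{(H1)}--\textbf{(H3)} impose none. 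The ingredients you list (continuity of $L_0$, superlinearity from below, equi--AC) control the action of $\zeta$ itself but not the cost of your spliced competitor. Your step~(ii) inherits the same problem: without the modulus for $S_T$, the successive $c$--conjugates of a bounded Borel pair need not be continuous.

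If you want to salvage a direct argument, one option is to bypass the modulus of $S_T$ entirely and use the LO machinery itself to manufacture continuity: for continuous $\psi$ the $c$--transform $\psi^c(x)=\min_y\{\psi(y)+S_T(x,y)\}$ is exactly $v(x,0)$ and is continuous by Theorem~\ref{LxxO}, and the reverse transform is an LO value for the time--reversed Lagrangian, hence also continuous. This lets you run the double--conjugation within $C(\T^N)$ once you start from a continuous pair, but you still need to reach the dual supremum through such pairs---which is essentially what the paper's approximation and density argument accomplishes.
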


\begin{proof}
We adapt the strategy of \cite[Theorem 1.1]{Sic23}, where the cost is assumed
continuous. In our setting $S_T$ is only lsc (Proposition \ref{conti}). Thus,
we approximate it monotonically from below by continuous costs, according to
\cite[Theorem 1.3]{V03}.

Following \cite[Theorem 1.1]{Sic23}, the result is first proved under the
assumption that $\mu$, $\nu$ are finite convex combinations of Dirac masses.
In that case the continuity of the approximating costs allows constructing
the appropriate final datum $g_0$ (\cite[Proposition 6.1]{Sic23}).
The conclusion is then obtained by a finite--dimensional argument based on
Hahn--Banach Theorem applied to a suitable space of measures.

One passes to general measures by exploiting the density of the convex hull
of Dirac measures in $\bP(\T^N)$ with respect to the narrow topology, and
the stability properties of LO solutions (Proposition \ref{stabia}).
\end{proof}

\smallskip

\begin{Proposition}\label{optimum}
For a measure $\xi \in \bP(\G)$, the following statements are equivalent:
\begin{itemize}
  \item[(i)] $\xi$ is $g_0$--optimal for some continuous function $g_0$;
  \item[(ii)] $\displaystyle \int A_{L_0}(\zeta) \, d\xi = \langle \xi(0), v(\cdot,0) \rangle - \langle \xi(T), g_0 \rangle$.
\end{itemize}
\end{Proposition}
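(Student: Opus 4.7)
The plan is to pivot on the pointwise identity that characterizes individual $g_0$--optimality: by the equivalent formulation in Definition \ref{optifinal}, a curve $\zeta \in \G$ is $g_0$--optimal if and only if
\[
A_{L_0}(\zeta) = v(\zeta(0),0) - g_0(\zeta(T)),
\]
while by Lemma \ref{sub} applied with $t_1=0$, $t_2=T$ (recalling $v(\cdot,T)=g_0$), the reverse inequality $A_{L_0}(\zeta) \geq v(\zeta(0),0) - g_0(\zeta(T))$ holds for every $\zeta \in \G$. Integrating against $\xi$ and rewriting the right--hand side through the push--forward identities $\mathrm{ev}_0\#\xi=\xi(0)$, $\mathrm{ev}_T\#\xi=\xi(T)$ by means of Proposition \ref{change} will transfer these pointwise statements into the lifted identity (ii).

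For (i) $\Rightarrow$ (ii), I would simply observe that on $\spt\xi$ the pointwise identity holds; since $v(\cdot,0)$ and $g_0$ are continuous, hence bounded on the compact torus, the identity forces $A_{L_0}$ to be bounded on $\spt\xi$, in particular $\xi$--integrable. A direct integration followed by Proposition \ref{change} applied to $\mathrm{ev}_0$ and $\mathrm{ev}_T$ then yields (ii).

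For (ii) $\Rightarrow$ (i), I would consider the nonnegative Borel functional
\[
F(\zeta) := A_{L_0}(\zeta) - v(\zeta(0),0) + g_0(\zeta(T)),
\]
which satisfies $F \geq 0$ on $\G$ by Lemma \ref{sub}. The same change of variable rewrites (ii) as $\int F\,d\xi = 0$, hence $F=0$ $\xi$--a.e. The decisive step is then to upgrade ``$\xi$--a.e.'' to ``on $\spt\xi$''. For this I would show that the set $E := \{F=0\}$, i.e.\ the set of all $g_0$--optimal curves, is closed in the uniform topology: for $\zeta_n \to \zeta$ uniformly with $\zeta_n\in E$, the right--hand side $v(\zeta_n(0),0)-g_0(\zeta_n(T))$ converges to $v(\zeta(0),0)-g_0(\zeta(T))$ by continuity of $v$ and $g_0$; lower semicontinuity of $A_{L_0}$ (Proposition \ref{presciare}) gives $A_{L_0}(\zeta) \leq v(\zeta(0),0)-g_0(\zeta(T))$, and the matching lower bound from Lemma \ref{sub} yields equality, so $\zeta \in E$. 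Minimality of the support then forces $\spt\xi \subseteq E$, i.e.\ $\xi$ is $g_0$--optimal.

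I expect this closedness of the $g_0$--optimal set to be the only genuine obstacle: $A_{L_0}$ is merely lsc on $\G$, not continuous, so one must exploit the fact that on $E$ the action equals a globally continuous function of the endpoint pair $(\zeta(0),\zeta(T))$, which is exactly what pinches the lsc bound into an equality via Lemma \ref{sub}.
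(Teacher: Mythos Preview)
Your proof is correct and rests on the same core idea as the paper: exploit the pointwise inequality from Lemma \ref{sub} and the pointwise identity characterizing $g_0$--optimality, integrate against $\xi$, and pass between ``$\xi$--a.e.'' and ``on $\spt\xi$'' via the lower semicontinuity of $A_{L_0}$. Two differences are worth noting. For (i)$\Rightarrow$(ii) the paper disintegrates $\xi$ with respect to $\mathrm{ev}_{(0,T)}$ before integrating, whereas you apply Proposition \ref{change} directly to $\mathrm{ev}_0$ and $\mathrm{ev}_T$; your route is shorter and equivalent. For (ii)$\Rightarrow$(i) the paper first invokes Kantorovich duality (Theorem \ref{kanton}) to produce a final datum and then argues by contradiction via the \emph{openness} of $\{A_{L_0}(\zeta)>v(\zeta(0),0)-g_0(\zeta(T))\}$; you instead keep the $g_0$ already present in (ii) and argue via the \emph{closedness} of the set of $g_0$--optimal curves. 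These are dual formulations of the same semicontinuity argument, but your version bypasses the Kantorovich step entirely---which is indeed unnecessary once $g_0$ is fixed, and whose appearance in the paper seems to reflect an ambiguity in the statement about whether the $g_0$ in (ii) is given or existentially quantified. Your fixed-$g_0$ reading is the one actually used downstream, e.g.\ in the proof of Theorem \ref{mfg}.
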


\begin{proof}
Assume $\xi$ satisfies (i), and let $v$ be the LO solution of \eqref{HJ-} with
final datum $g_0$. By disintegration of $\xi$ with respect to $\mathrm{ev}_{(0,T)}$,
denoted by $\eta_{(x,y)}$, we get
\begin{equation}\label{materiotto1}
\int A_{L_0}(\zeta) \, d\xi
= \int \Bigl( \int A_{L_0}(\zeta) \, d\eta_{(x,y)} \Bigr)
  d(\mathrm{ev}_{(0,T)} \# \xi)(x,y).
\end{equation}

Since $\xi$ is $g_0$--optimal, all curves $\zeta$ in its support are
$g_0$--optimal, hence for $(x,y)=(\zeta(0),\zeta(T))$ we have
\[
A_{L_0}(\zeta) = v(x,0)-g_0(y).
\]

Therefore by \eqref{materiotto1}
\[
\int A_{L_0}(\zeta) \, d\xi(\zeta)
= \int [v(x,0)-g_0(y)] \, d(\mathrm{ev}_{(0,T)} \# \xi)(x,y)
= \langle \xi(0), v(\cdot,0) \rangle - \langle \xi(T), g_0 \rangle.
\]

Conversely, Kantorovitch duality Theorem (Theorem \ref{kanton}) yields
\begin{equation}\label{materiotto2}
S_T(\xi(0),\xi(T))
= \langle \xi(0), v(\cdot,0)\rangle - \langle \xi(T), g_0 \rangle
\end{equation}
for some continuous function $g_0$ and the LO solution $v$ of \eqref{HJ-}
with final condition $g_0$.

Assume that $\xi$ does not satisfy (i) for this $g_0$, then there exists a
set of curves with nonvanishing $\xi$--measure on which the inequality in
Lemma \ref{sub} is strict. Equivalently the open set
\[
E = \{ \zeta \in \G \mid A_{L_0}(\zeta) > v(\zeta(0),0) -g_0(\zeta(T)) \}
\]
has positive $\xi$--measure.  By integrating, we get
\[\int A_{L_0}(\zeta) \, d\xi > \langle \xi(0), v(\cdot,0) \rangle - \langle \xi(T), g_0 \rangle\]
which contradicts (ii).
\end{proof}

\smallskip

The following result directly comes from Proposition \ref{compattone}. It will
be used in the proof of the fixed point theorem of Section \ref{fissa}.

\smallskip

\begin{Proposition}\label{compattonebis}
Let $M > 0$ and $\om$ be a modulus. The set of $g_0$--optimal measures in
$\bP(\G)$, where $g_0$ ranges over continuous functions with
\begin{equation}\label{cicciuzzo}
\max_{\T^N} |g_0(x)| \leq M,
\qquad
|g_0(x_1) - g_0(x_2)| \leq \om(|x_2-x_1|)
\quad\hbox{for any $x_1$, $x_2$ in $\T^N$}
\end{equation}
is a compact subset in $\bP(\G)$.
\end{Proposition}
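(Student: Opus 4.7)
The plan is to prove the two standard ingredients of sequential compactness in the Polish space $\bP(\G)$ separately: relative compactness via uniform tightness, and closedness of the family under narrow limits.

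Uniform tightness is essentially a restatement of Proposition \ref{compattone}: every $\xi$ in the family is supported on $g_0$-optimal curves for some admissible $g_0$, hence on a subset of the single compact set $K \subset \G$ produced by that proposition (which depends only on $M$ and $\om$). Thus $\xi(K)=1$ uniformly, and Prokhorov yields relative compactness.

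For closedness, I would take $\xi_n \to \xi$ narrowly with each $\xi_n$ being $g^0_n$-optimal for some $g^0_n$ satisfying \eqref{cicciuzzo}, and exhibit an admissible $g_0$ with respect to which $\xi$ is optimal. Ascoli--Arzel\`a applied to \eqref{cicciuzzo} gives, along a subsequence, a uniform limit $g_0$ still satisfying \eqref{cicciuzzo}, and Proposition \ref{stabia} yields uniform convergence of the associated LO solutions $v_n \to v$. The natural object to track is the \emph{defect functional}
\[
F_{g_0}(\zeta) \;=\; A_{L_0}(\zeta) + g_0(\zeta(T)) - v(\zeta(0),0),
\]
which by Lemma \ref{sub} takes values in $[0,+\infty]$ and vanishes exactly on $g_0$-optimal curves; being the sum of an lsc term (Proposition \ref{presciare}) and two continuous ones, it is itself lsc. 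The analogous $F_{g^0_n}$ is zero (and finite) on $\spt\xi_n$, so on that support
\[
0 \leq F_{g_0}(\zeta) \;=\; F_{g_0}(\zeta) - F_{g^0_n}(\zeta) \;\leq\; \|g_0 - g^0_n\|_\infty + \|v - v_n\|_\infty \;=:\; \delta_n \to 0,
\]
whence $\int F_{g_0}\,d\xi_n \leq \delta_n \to 0$. Proposition \ref{paglia} then forces $\int F_{g_0}\,d\xi \leq \liminf_n \int F_{g_0}\,d\xi_n = 0$, and since $F_{g_0}\geq 0$ with $\{F_{g_0}>0\}$ open, this yields $\spt\xi \subset \{F_{g_0}=0\}$, i.e.\ $\xi$ is $g_0$-optimal.

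The only real obstacle is that the defect which vanishes on $\spt\xi_n$ is $F_{g^0_n}$, not the fixed $F_{g_0}$ one wants to integrate, so optimality of $\xi_n$ does not directly yield $\int F_{g_0}\,d\xi_n = 0$. The whole argument rests on replacing $F_{g^0_n}$ by $F_{g_0}$ with uniform error $\delta_n\to 0$; this is where both the equicontinuity condition in \eqref{cicciuzzo} (through Ascoli--Arzel\`a) and the stability Proposition \ref{stabia} are essential, the latter being the less routine ingredient.
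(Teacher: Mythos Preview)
Your proof is correct, and in fact more complete than the paper's. The paper's entire proof reads: ``By Proposition \ref{compattone} all such measures are supported on a compact subset of $\G$. Compactness then follows from Prohorov's theorem.'' This is exactly your tightness step, and strictly speaking it yields only \emph{relative} compactness; the paper does not separately verify that the family is closed under narrow limits. You supply that missing half with the defect-functional argument. The argument is sound: the key inequality $0\le F_{g_0}\le\delta_n$ on $\spt\xi_n$ is legitimate because curves in $\spt\xi_n$, being $g^0_n$-optimal, have finite action, so the subtraction $F_{g_0}-F_{g^0_n}$ involves no $\infty-\infty$; and $\{F_{g_0}>0\}$ is indeed open by lower semicontinuity, so $\int F_{g_0}\,d\xi=0$ forces $\spt\xi\subset\{F_{g_0}=0\}$. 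An alternative route to closedness, closer in spirit to the paper's later arguments, would be to pass to the limit in the integrated identity of Proposition \ref{optimum}(ii) using Propositions \ref{paglia} and \ref{stabia}; your pointwise formulation has the minor advantage of locating $\spt\xi$ directly without invoking that characterization.
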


\begin{proof}
By Proposition \ref{compattone} all such measures are supported on a compact
subset of $\G$. Compactness then follows from Prohorov's theorem.
\end{proof}

\bigskip

\section{A distinguished Borel vector field}\label{field}

We introduce the following additional assumption on $H_0$:

\begin{itemize}
  \item[{\bf (H4)}] the function $p \mapsto H_0(x,t,p)$ is differentiable in $\R^N$,
\end{itemize}

which is equivalent to the strict convexity of the Lagrangian:

\begin{equation}\label{H10bis}
\hbox{$q \mapsto L_0(x,t,q)$ is strictly convex.}
\end{equation}

\smallskip

\begin{Notation}
We denote by $H^0_p$ the derivative of $H_0$ with respect to $p$.
\end{Notation}

\smallskip

We fix a continuous final datum $g_0$ and denote by $v$ the LO solution to \eqref{HJ-}
with final datum $g_0$.
In accordance with the title, we construct in this section, under assumptions
{\bf (H1)}--{\bf (H4)}, a Borel vector field in $\T^N \times [0,T]$, depending on
$g_0$ and $v$, which will be linked to $g_0$--optimal trajectories in Section
\ref{continual} and employed in a generalized formulation of MFG systems in
Section \ref{generale}.

\smallskip

\begin{Notation}
For $(x,t) \in \T^N \times (0,T)$ and $q \in \R^N$, we denote by
\[
D_{(q,1)} v(x,t)
\]
the directional derivative of $v$ at $(x,t)$ in the space--time direction $(q,1)$,
whenever it exists, namely
\[
D_{(q,1)} v(x,t) = \lim_{h \to 0} \frac{v(x+hq, t+h) - v(x,t)}{h}.
\]
\end{Notation}

\smallskip

\begin{Lemma}\label{prealana}
For any $(x,t,q) \in \T^N \times (0,T) \times \R^N$ we have
\begin{equation}\label{prealana1}
 \liminf_{h \to 0} \frac{v(x+hq,t+h) - v(x,t)}{h} \geq -L_0(x,t,q).
\end{equation}
\end{Lemma}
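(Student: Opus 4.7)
The plan is to use the dynamic programming (subsolution) inequality from Lemma \ref{sub}, tested against the straight line interpolating between $(x,t)$ and $(x+hq, t+h)$, and then let $h \to 0$ invoking continuity of $L_0$. The estimate is essentially a one-sided Taylor-type expansion encoded by the Lax--Oleinik formula, and the only subtlety is that the limit is two-sided, so I need to handle $h>0$ and $h<0$ separately.

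For $h>0$ small enough that $t+h \leq T$, I would introduce the segment $\zeta_h(s) = x + (s-t)q$ for $s \in [t, t+h]$, with $\dot\zeta_h \equiv q$. Lemma \ref{sub} applied with $t_1=t$, $t_2=t+h$ gives
\[
v(x,t) - v(x+hq,t+h) \leq \int_t^{t+h} L_0(x+(s-t)q, s, q)\, ds,
\]
which, after multiplying by $-1$ and dividing by $h>0$, yields
\[
\frac{v(x+hq, t+h)-v(x,t)}{h} \geq -\frac{1}{h}\int_t^{t+h} L_0(x+(s-t)q, s, q)\, ds.
\]
Since $L_0$ is continuous in $(x,t)$ and the integrand converges uniformly on the shrinking interval to $L_0(x,t,q)$, the right-hand side tends to $-L_0(x,t,q)$ as $h \to 0^+$, giving the inequality for the right liminf.

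For $h<0$, setting $k=-h>0$ and using the same straight line $\zeta(s)=x+(s-t)q$ on $[t-k, t]$ (which links $x-kq$ to $x$), Lemma \ref{sub} with $t_1=t-k$, $t_2=t$ yields
\[
v(x+hq, t+h) - v(x,t) = v(x-kq, t-k) - v(x,t) \leq \int_{t-k}^t L_0(x+(s-t)q, s, q)\, ds.
\]
Now dividing by $h=-k<0$ reverses the inequality, producing
\[
\frac{v(x+hq, t+h)-v(x,t)}{h} \geq -\frac{1}{k}\int_{t-k}^t L_0(x+(s-t)q, s, q)\, ds,
\]
whose right-hand side again converges to $-L_0(x,t,q)$ as $k\to 0^+$ by continuity of $L_0$.

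Combining both one-sided limits gives the claimed two-sided liminf inequality. No genuine obstacle arises: the only thing to keep track of is the sign of $h$ when dividing, which swaps the role of endpoints in Lemma \ref{sub} but still produces the same lower bound $-L_0(x,t,q)$; and the hypothesis $t \in (0,T)$ is what allows both signs of $h$ to be admissible.
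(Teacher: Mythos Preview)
Your proof is correct and follows essentially the same approach as the paper: both split into the cases $h>0$ and $h<0$, apply Lemma \ref{sub} to the straight segment $s\mapsto x+(s-t)q$, divide by $h$ with the appropriate sign reversal, and pass to the limit using the continuity of $L_0$.
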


\begin{proof}
For $h < 0$, Lemma \ref{sub} yields
\[
v(x+hq,t+h) - v(x,t) \leq \int_{t+h}^t L_0(x+(\tau-t)q,\tau,q)\, d\tau,
\]
which gives
\begin{equation}\label{alana1}
\frac{v(x+hq,t+h) - v(x,t)}{h}
\geq -\frac{1}{h} \int_{t+h}^t -L_0(x+(\tau-t)q, \tau, q)\, d\tau.
\end{equation}

For $h > 0$, a similar argument gives
\[
v(x,t) - v(x+hq,t+h) \leq \int_t^{t+h} L_0(x+(\tau-t)q,\tau,q)\, d\tau,
\]
and
\begin{equation}\label{alana2}
\frac{v(x+hq,t+h) - v(x,t)}{h}
\geq \frac{1}{h} \int_t^{t+h} -L_0(x+(\tau-t)q,\tau,q)\, d\tau.
\end{equation}

The claim follows by combining \eqref{alana1} and \eqref{alana2}.
\end{proof}

\smallskip

We define the multivalued (possibly empty--valued) map
$\T^N \times (0,T) \to \R^N$
\begin{equation}\label{defcW}
\mathcal W_{g_0}(x,t)
= \left\{ q \in \R^N \mid D_{(q,1)} v(x,t) = -L_0(x,t,q) \right\}.
\end{equation}

We then set
\begin{equation}\label{defcA}
\A_{g_0} = \{ (x,t) \mid \mathcal W_{g_0}(x,t) \neq \emptyset \}.
\end{equation}

\smallskip

\begin{Remark}
Since the LO solution $v(x,t)$ is, in general, only continuous, the set $\A_{g_0}$
might a priori be small.
However, we shall prove in Proposition \ref{teogigionzi} that $\A_{g_0}$ contains
a full measure set (with respect to $\mu_\zeta$) for any $g_0$--optimal
$\zeta \in \G$.
In particular, for every optimal curve $\zeta$, $(\zeta(t),t) \in \A_{g_0}$ for
almost all times.
\end{Remark}

\smallskip

\begin{Proposition}\label{salgado}
For any $(x,t) \in \T^N \times (0,T)$, the set $\mathcal W_{g_0}(x,t)$ contains
at most one element.
\end{Proposition}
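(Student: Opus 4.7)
The plan is to argue by contradiction, assuming two distinct velocities $q_1 \neq q_2$ both lie in $\mathcal W_{g_0}(x,t)$, and to derive a violation of the strict convexity of $L_0(x,t,\cdot)$ guaranteed by {\bf (H4)} via \eqref{H10bis}. Setting $q_m := (q_1+q_2)/2$, the key observation is that the assumption $q_i \in \mathcal W_{g_0}(x,t)$ pins down the asymptotic behaviour of $v$ along the two space-time rays through $(x,t)$, and I can bridge the two endpoints of these rays by a single affine curve whose velocity is exactly $q_m$.

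First I would unfold the definition of the directional derivative in both temporal directions. For $h \to 0^+$, using $q_2 \in \mathcal W_{g_0}(x,t)$,
\[
v(x + s q_2,\, t+s) = v(x,t) - s\, L_0(x,t,q_2) + o(s),
\]
and using $q_1 \in \mathcal W_{g_0}(x,t)$ with the substitution $h = -s$,
\[
v(x - s q_1,\, t-s) = v(x,t) + s\, L_0(x,t,q_1) + o(s).
\]
Subtracting, one obtains
\[
v(x - s q_1,\, t-s) - v(x + s q_2,\, t+s) = s\bigl[L_0(x,t,q_1) + L_0(x,t,q_2)\bigr] + o(s).
\]

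Next I would introduce the affine curve
\[
\zeta(\tau) = (x - s q_1) + (\tau - (t-s))\, q_m, \qquad \tau \in [t-s,\, t+s],
\]
which joins $x - s q_1$ at time $t-s$ to $x + s q_2$ at time $t+s$, with constant velocity $q_m$. Since $\zeta(\tau)$ stays in a shrinking neighbourhood of $x$, continuity of $L_0$ yields
\[
\int_{t-s}^{t+s} L_0(\zeta(\tau),\tau,q_m)\, d\tau = 2s\, L_0(x,t,q_m) + o(s),
\]
and Lemma \ref{sub} gives
\[
v(x - s q_1,\, t-s) - v(x + s q_2,\, t+s) \leq 2s\, L_0(x,t,q_m) + o(s).
\]

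Combining the two estimates, dividing by $2s$, and letting $s \to 0^+$ produces
\[
\frac{L_0(x,t,q_1) + L_0(x,t,q_2)}{2} \leq L_0\!\left(x,t,\tfrac{q_1+q_2}{2}\right),
\]
which contradicts strict convexity of $q \mapsto L_0(x,t,q)$ as soon as $q_1 \neq q_2$. Hence $\mathcal W_{g_0}(x,t)$ is a singleton or empty. I do not expect genuine obstacles here; the only delicate point is the control of the error term coming from evaluating $L_0$ along $\zeta(\tau)$ instead of at $(x,t)$, which is handled uniformly by continuity of $L_0$ on the compact set $\T^N \times [0,T]$ together with the fact that $\zeta([t-s,t+s]) \subset B(x, C s)$ for $C = C(|q_1|,|q_2|)$.
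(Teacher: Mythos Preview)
Your proof is correct and follows essentially the same route as the paper's: both argue by contradiction, expand the two directional derivatives to first order (one forward in time, one backward), subtract, and compare against the action of the affine curve with velocity $(q_1+q_2)/2$ via Lemma~\ref{sub}, reaching a contradiction with strict convexity \eqref{H10bis}. The only cosmetic difference is that the paper swaps the roles of $q_1$ and $q_2$ relative to your choice, which is immaterial by symmetry.
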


\begin{proof}
By \eqref{H10bis} $L_0(x,t,\cdot)$ is strictly convex.
Assume by contradiction that $q_1 \neq q_2 \in \mathcal W_{g_0}(x,t)$.

Let $h > 0$, then by definition
\begin{align*}
v(x+h q_1, t+h) &= v(x,t) - h L_0(x,t,q_1) + \mathrm o(h), \\
v(x-h q_2, t-h) &= v(x,t) + h L_0(x,t,q_2) + \mathrm o(h),
\end{align*}
where $\mathrm o(h)$ denotes the Landau symbol.
Subtracting yields
\[
v(x - h q_2, t-h) - v(x+h q_1, t+h)
= 2h \left( \frac{L_0(x,t,q_2)}{2} + \frac{L_0(x,t,q_1)}{2} \right)
+ \mathrm o(h),
\]
\begin{equation}\label{salgado1}
\lim_{h \to 0}
\frac{1}{2h} \left( v(x - h q_2, t-h) - v(x+h q_1, t+h) \right)
=
\frac{L_0(x,t,q_2)}{2} + \frac{L_0(x,t,q_1)}{2}.
\end{equation}

We define in $[t-h,t+h]$ the curves
\[
\zeta_h(t) = y_h + t\left( \frac{q_1}{2} + \frac{q_2}{2} \right),
\]
with
\[
y_h = x + h q_1 - (t+h)\left( \frac{q_1}{2} + \frac{q_2}{2} \right).
\]

We have
\[
\zeta_h(t-h) = x - h q_2,
\qquad
\zeta_h(t+h) = x + h q_1.
\]

Lemma \ref{sub} yields
\begin{align*}
\frac{1}{2h} \bigl( v(x - h q_2, t-h) - v(x + h q_1, t+h) \bigr)
&\leq \frac{1}{2h} \int_{t-h}^{t+h}
L_0(\zeta_h, \tau, \dot\zeta_h)\, d\tau \\
&= L_0\!\left( \zeta_h(\tau_h), \tau_h,
\frac{q_1}{2} + \frac{q_2}{2} \right)
\end{align*}
for a suitable $\tau_h \in [t-h,t+h]$.

Since
\[
\lim_{h \to 0} \zeta_h(\tau_h) = x,
\qquad
\lim_{h \to 0} \tau_h = t,
\]
we obtain
\[
\lim_{h \to 0}
\frac{1}{2h} \bigl( v(x - h q_2, t-h) - v(x + h q_1, t+h) \bigr)
\leq
L_0\!\left( x, t, \frac{q_1}{2} + \frac{q_2}{2} \right).
\]

Comparing with \eqref{salgado1}, we obtain
\[
\frac{1}{2}\, L_0(x,t,q_1)
+
\frac{1}{2}\, L_0(x,t,q_2)
\leq
L_0\!\left( x, t, \frac{q_1}{2} + \frac{q_2}{2} \right),
\]
contradicting the strict convexity of $L_0$.
\end{proof}

\smallskip

\begin{Notation}\label{doppiav}
We denote by $W_{g_0}$ the map which associates to any
$(x,t) \in \A_{g_0}$ the unique vector in $\mathcal W_{g_0}(x,t)$.
\end{Notation}

\medskip

We next set
\[
G(x,t,q)
=
\limsup_{h \to 0}
\frac{v(x+h q,t+h) - v(x,t)}{h},
\qquad
(x,t,q) \in \T^N \times [0,T] \times \R^N.
\]

\smallskip

By Lemma \ref{prealana} we obtain:

\begin{Corollary}\label{alana}
A triple $(x,t,q)$ satisfies
\[
G(x,t,q) \leq -L_0(x,t,q)
\]
if and only if $(x,t) \in \A_{g_0}$ and $q = W_{g_0}(x,t)$.
\end{Corollary}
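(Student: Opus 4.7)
The plan is to obtain the equivalence as an immediate squeeze between the lower bound supplied by Lemma \ref{prealana} and the hypothesis on $G$, using Proposition \ref{salgado} to pin down uniqueness.

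For the easy implication, assume $(x,t) \in \A_{g_0}$ and $q = W_{g_0}(x,t)$. Then by the definitions \eqref{defcW}--\eqref{defcA} and Notation \ref{doppiav}, the directional derivative $D_{(q,1)} v(x,t)$ exists and equals $-L_0(x,t,q)$. In particular the $\limsup$ coincides with this derivative, so $G(x,t,q) = -L_0(x,t,q)$, which trivially satisfies $G(x,t,q) \leq -L_0(x,t,q)$.

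For the nontrivial direction, suppose $G(x,t,q) \leq -L_0(x,t,q)$. Lemma \ref{prealana} gives
\[
\liminf_{h \to 0} \frac{v(x+hq,t+h) - v(x,t)}{h} \geq -L_0(x,t,q),
\]
while by the very definition of $G$,
\[
\limsup_{h \to 0} \frac{v(x+hq,t+h) - v(x,t)}{h} = G(x,t,q) \leq -L_0(x,t,q).
\]
Since $\liminf \leq \limsup$ always, both quantities must equal $-L_0(x,t,q)$; hence the full limit $D_{(q,1)} v(x,t)$ exists and equals $-L_0(x,t,q)$. By \eqref{defcW} this says $q \in \mathcal W_{g_0}(x,t)$, so $(x,t) \in \A_{g_0}$, and by Proposition \ref{salgado} the element $q$ is the unique one, i.e.\ $q = W_{g_0}(x,t)$.

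There is no real obstacle here: the statement is essentially a reformulation of Lemma \ref{prealana} together with the uniqueness result of Proposition \ref{salgado}. The only point worth a brief word is to note explicitly that the apparent one-sided inequality in the hypothesis is in fact forced to be an equality, which is exactly what makes the directional derivative exist.
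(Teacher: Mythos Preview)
Your proof is correct and is exactly the argument the paper has in mind: the corollary is stated immediately after Lemma~\ref{prealana} with no proof beyond the phrase ``By Lemma~\ref{prealana} we obtain,'' and your write-up simply unpacks this, using the squeeze between the $\liminf$ bound of Lemma~\ref{prealana} and the hypothesis on $G$, together with Proposition~\ref{salgado} for uniqueness.
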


\smallskip

\begin{Lemma}\label{keiko}
The function $G : \T^N \times (0,T) \times \R^N \to \R$ is Borel.
\end{Lemma}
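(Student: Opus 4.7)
The plan is to exploit the continuity of the Lax--Oleinik value function $v$ (see Remark~\ref{barlone} and the discussion following \eqref{LO}) to reduce the $\limsup$ defining $G$ to a countable operation on continuous functions. Explicitly, for each fixed $h \in \R \setminus \{0\}$ with the property that $t + h \in (0,T)$, the difference quotient
\[
F_h(x,t,q) = \frac{v(x+hq,\,t+h) - v(x,t)}{h}
\]
is continuous in $(x,t,q)$, hence Borel, on the open set
\[
U_h = \{(x,t,q) \in \T^N \times (0,T) \times \R^N \mid t+h \in (0,T)\}.
\]

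The first step is to observe that, since $v$ is continuous on $\T^N \times [0,T]$, the map $h \mapsto F_h(x,t,q)$ is itself continuous on any punctured neighborhood of $0$ contained in $\{h : t+h \in (0,T)\}$, for each fixed $(x,t,q)$ with $t \in (0,T)$. By density of $\Q$ in $\R$, it follows that
\[
G(x,t,q) = \limsup_{h \to 0} F_h(x,t,q) = \limsup_{\substack{h \to 0 \\ h \in \Q \setminus \{0\}}} F_h(x,t,q).
\]

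The second step converts the countable $\limsup$ into a Borel construction. For each $n \in \N$, set
\[
G_n(x,t,q) = \sup \bigl\{ F_h(x,t,q) \mid h \in \Q,\ 0 < |h| < 1/n,\ t+h \in (0,T) \bigr\},
\]
where the supremum over an empty set is taken to be $-\infty$. Each $G_n$ is a countable supremum of Borel functions (each $F_h$ restricted to the Borel set where it is defined, extended by $-\infty$ otherwise), hence Borel. Since the families defining the $G_n$ are nested, $G_n$ decreases to $G$ pointwise as $n \to \infty$, and therefore
\[
G(x,t,q) = \inf_{n \in \N} G_n(x,t,q)
\]
is Borel on $\T^N \times (0,T) \times \R^N$.

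No substantive obstacle is expected: the only mild subtlety is bookkeeping the domains $U_h$ to ensure that $F_h$ is well defined before taking suprema, which is handled by restricting $n$ large enough (equivalently, $|h|$ small enough) relative to $t$, and by the convention on empty suprema. Continuity of $v$ carries the entire argument.
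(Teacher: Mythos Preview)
Your proof is correct and follows essentially the same approach as the paper: reduce the $\limsup$ to a countable supremum over a dense set of $h$'s using continuity of $h \mapsto F_h(x,t,q)$, then write $G$ as an $\inf_n \sup$ of countably many continuous (hence Borel) functions. The only difference is cosmetic---the paper uses an arbitrary countable dense $J \subset (-1,1)\setminus\{0\}$ and does not spell out the domain bookkeeping on $t+h$, whereas you use $\Q\setminus\{0\}$ and handle the domain issue explicitly via the $-\infty$ convention.
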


\begin{proof}
Let $J \subset (-1,1)$ be a countable dense subset not containing $0$.

Since the function
\[
h \mapsto \frac{v(x+h q, t+h) - v(x,t)}{h}
\]
is continuous for $h \neq 0$, we have
\begin{equation}\label{keiko1}
G(x,t,q)
=
\inf_{n \in \N}
\sup
\left\{
\frac{v(x+h q, t+h) - v(x,t)}{h}
\; \middle|\;
|h| < \frac{1}{n},\ h \in J
\right\}.
\end{equation}

The map
\[
(x,t,q)
\mapsto
\sup
\left\{
\frac{v(x+h q, t+h) - v(x,t)}{h}
\; \middle|\;
|h| < \frac{1}{n},\ h \in J
\right\}
\]
is Borel, being the supremum of countably many continuous functions.
Thus $G$ is the infimum of a countable family of Borel functions,
hence Borel.
\end{proof}

\smallskip

We recall classical notions (see \cite{Sri98}).
In what follows, $X$ and $Y$ denote arbitrary Polish spaces.

\begin{Definition}\label{sriana}
A subset $A \subset X$ is called \emph{analytic} if it is the projection of a Borel set
$B \subset X \times Y$ for some $Y$.
Any Borel subset of $X$ is analytic, but the converse is not true.
\end{Definition}

\begin{Definition}\label{srimap}
Given any subset $C \subset X$, a set $E \subset C$ is called Borel in $C$ if
\[
E = B \cap C
\]
for some Borel subset $B$ of $X$.

A map $f : C \to Y$ is called \emph{Borel} if for any Borel set $B_0$ of $Y$ there exists
a Borel set $B \subset X$ such that
\[
f^{-1}(B_0) = B \cap C.
\]
\end{Definition}

\smallskip

We give the following results in a form adapted to our setting.

\smallskip

\begin{Theorem}[Graph Theorem]\label{grapho}
Let $A \subset \T^N \times [0,T]$ be analytic.
A map $f : A \to \R^N$ is Borel if and only if its graph is a Borel subset of
$A \times \R^N$.
\end{Theorem}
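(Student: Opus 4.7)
The plan is to prove the two directions separately: the forward implication is a routine countable-base argument, while the backward implication rests on Lusin's separation theorem from descriptive set theory.

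For the forward direction, I would fix a countable base $\{V_n\}_{n\in\N}$ for the topology of $\R^N$. By Hausdorff separation, the equality $y=f(x)$ is equivalent to the system of conditions ``for every $n$, $y\in V_n$ if and only if $x\in f^{-1}(V_n)$.'' This gives the representation
$$\text{graph}(f) = \bigcap_{n\in\N}\Bigl[\bigl(f^{-1}(V_n)\times V_n\bigr)\cup\bigl((A\setminus f^{-1}(V_n))\times(\R^N\setminus V_n)\bigr)\Bigr].$$
Since each $f^{-1}(V_n)$ is Borel in $A$ by hypothesis, every bracketed term is Borel in $A\times\R^N$, and hence so is the graph.

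For the backward direction I would write $\text{graph}(f)=G\cap(A\times\R^N)$ with $G$ Borel in $X\times\R^N$, where $X:=\T^N\times[0,T]$. Given a Borel $B_0\subset\R^N$, the goal is to exhibit a Borel $B\subset X$ with $B\cap A=f^{-1}(B_0)$. The first step is to show that both $f^{-1}(B_0)$ and its complement $A\setminus f^{-1}(B_0)=f^{-1}(\R^N\setminus B_0)$ are analytic subsets of $X$. This follows from the identity
$$f^{-1}(B_0)=\pi_X\bigl(\text{graph}(f)\cap(X\times B_0)\bigr)=\pi_X\bigl(G\cap(A\times B_0)\bigr),$$
where $\pi_X:X\times\R^N\to X$ is the projection: the set $G\cap(A\times B_0)$ is the intersection of a Borel set with the analytic set $A\times B_0$, hence analytic, and its projection is also analytic. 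The same argument with $\R^N\setminus B_0$ in place of $B_0$ handles the complement.

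The conclusion then comes from Lusin's separation theorem (see \cite{Sri98}): two disjoint analytic subsets of a Polish space can always be separated by a Borel set. Applied to the disjoint analytic sets $f^{-1}(B_0)$ and $A\setminus f^{-1}(B_0)$, it furnishes a Borel $B\subset X$ with $f^{-1}(B_0)\subset B$ and $B\cap(A\setminus f^{-1}(B_0))=\emptyset$; these two properties combined give $B\cap A=f^{-1}(B_0)$, exactly as demanded by Definition~\ref{srimap}. The main obstacle is precisely this separation step: because $A$ is only analytic rather than Borel, analyticity plus injectivity of the projection of $\text{graph}(f)$ is not sufficient to guarantee that the image is Borel (simple counterexamples such as $A\times\{0\}$ with $A$ analytic non-Borel illustrate this), so a direct Lusin--Souslin argument is unavailable and the separation theorem is the indispensable descriptive-set-theoretic input.
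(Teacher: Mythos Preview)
Your proof is correct and is essentially the standard descriptive-set-theoretic argument. The paper does not give its own proof of this statement; it simply cites \cite[Theorem 4.5.2]{Sri98}, and your argument is precisely the one found there (countable-base formula for the forward direction, Lusin separation for the converse).
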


\begin{proof}
See \cite[Theorem 4.5.2]{Sri98}.
\end{proof}

\smallskip

\begin{Theorem}[Extension Theorem]\label{extension}
Any Borel map $f : C \to Y$, with $C \subset X$, extends to a Borel map defined
on all of $X$.
\end{Theorem}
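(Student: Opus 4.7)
The plan is to reduce the problem to extending countably many real-valued Borel functions, and then correct the resulting extension so that it takes values in $Y$ rather than in a larger ambient space.

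First, since $Y$ is Polish, I would use the classical embedding
\[
\iota : Y \longrightarrow [0,1]^{\N}, \qquad \iota(y) = \bigl(\varphi_n(y)\bigr)_{n \in \N},
\]
where $(\varphi_n)$ is a countable family of continuous $[0,1]$-valued functions separating the points of $Y$; this $\iota$ is a homeomorphism onto its image, and by Alexandrov's theorem $\iota(Y)$ is a $G_\delta$ — in particular, Borel — subset of $[0,1]^{\N}$. Composing $f$ with $\iota$ reduces the problem to extending countably many Borel coordinates $f_k : C \to [0,1]$.

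For each $k$ I would exploit the dyadic expansion. Writing $f_k = \sum_{n \geq 1} 2^{-n} \chi_{E_{k,n}}$ on $C$, the sets $E_{k,n} \subset C$ are Borel in $C$, so Definition \ref{srimap} furnishes Borel sets $B_{k,n} \subset X$ with $E_{k,n} = B_{k,n} \cap C$. Setting
\[
\widetilde f_k(x) = \sum_{n \geq 1} 2^{-n} \chi_{B_{k,n}}(x), \qquad x \in X,
\]
defines a Borel function $\widetilde f_k : X \to [0,1]$ that coincides with $f_k$ on $C$ by construction. Assembling the coordinates produces a Borel extension $\widetilde f : X \to [0,1]^{\N}$ of $\iota \circ f$.

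The main obstacle is the last step: $\widetilde f$ need not take values in $\iota(Y)$, and this is exactly where the Borelness of the embedded image — rather than mere analyticity — is essential. Since $\iota(Y)$ is Borel in $[0,1]^{\N}$, the preimage $A = \widetilde f^{-1}(\iota(Y))$ is Borel in $X$ and contains $C$. Fixing any $y_0 \in Y$ (if $Y = \emptyset$ then $C = \emptyset$ and nothing is to be proved), redefining $\widetilde f$ to equal $\iota(y_0)$ on the Borel set $X \setminus A$ and composing the result with the Borel inverse $\iota^{-1} : \iota(Y) \to Y$ yields the desired Borel map $X \to Y$ extending $f$.
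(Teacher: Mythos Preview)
Your argument is correct. The paper does not actually prove this statement: its entire proof reads ``See \cite[Theorem 3.2.3]{Sri98}.'' Your proposal supplies a self-contained proof along the classical Kuratowski line---embed $Y$ as a $G_\delta$ in the Hilbert cube, extend each coordinate via its dyadic expansion, then retract onto the image---which is essentially the argument one finds in Srivastava's book. So there is no methodological comparison to make here: you have written out what the paper only cites.

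One small remark for polish: when you invoke the dyadic expansion, it is worth noting explicitly that the $n$-th binary digit map $d_n:[0,1]\to\{0,1\}$ is Borel (under a fixed convention for dyadic rationals), so that $E_{k,n}=f_k^{-1}(d_n^{-1}(1))$ is indeed Borel in $C$ in the sense of Definition~\ref{srimap}. This is implicit in what you wrote but deserves a sentence.
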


\begin{proof}
See \cite[Theorem 3.2.3]{Sri98}.
\end{proof}

\smallskip

\begin{Lemma}\label{trieste}
The set $\A_{g_0} \subset \T^N \times (0,T)$ is analytic, and
$W_{g_0}$ is a Borel map in the sense of Definition \ref{srimap}.
\end{Lemma}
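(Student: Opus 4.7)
The strategy is to read both statements off Corollary \ref{alana}, which identifies $\A_{g_0}$ and the graph of $W_{g_0}$ in terms of the single inequality $G(x,t,q)+L_0(x,t,q)\le 0$. Once this is done, the analyticity of $\A_{g_0}$ is a projection argument, and the Borel character of $W_{g_0}$ is an application of the Graph Theorem.

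First I would introduce the set
\[
B = \{(x,t,q)\in\T^N\times(0,T)\times\R^N \mid G(x,t,q) + L_0(x,t,q) \leq 0\}.
\]
Since $G$ is Borel by Lemma \ref{keiko} and $L_0$ is continuous by {\bf (H1)}, the function $(x,t,q)\mapsto G(x,t,q)+L_0(x,t,q)$ is Borel, so $B$ is a Borel subset of $\T^N\times(0,T)\times\R^N$. By Corollary \ref{alana}, the projection of $B$ onto the first two coordinates is precisely $\A_{g_0}$, so $\A_{g_0}$ is analytic in the sense of Definition \ref{sriana}.

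Next I would identify the graph of $W_{g_0}$. By Proposition \ref{salgado} the set $\mathcal W_{g_0}(x,t)$ contains at most one element, and by Corollary \ref{alana} a pair $(x,t,q)$ lies in $B$ if and only if $(x,t)\in\A_{g_0}$ and $q=W_{g_0}(x,t)$. Therefore
\[
\mathrm{graph}(W_{g_0}) = B \cap (\A_{g_0} \times \R^N),
\]
which is a Borel subset of $\A_{g_0}\times\R^N$ in the sense of Definition \ref{srimap}. Applying the Graph Theorem \ref{grapho} with $A=\A_{g_0}$ (which is analytic by the first part), we conclude that $W_{g_0}:\A_{g_0}\to\R^N$ is Borel.

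The only subtle point is that $\A_{g_0}$ is a priori only analytic, not Borel, so one must work in the framework of Definition \ref{srimap} and invoke the version of the Graph Theorem that applies to analytic domains (Theorem \ref{grapho}); this is exactly why the abstract Polish-space results of \cite{Sri98} were recalled above. No further information on the structure of $v$ is needed, since all the nontrivial regularity of the problem is already encoded in the Borel measurability of $G$ established in Lemma \ref{keiko}.
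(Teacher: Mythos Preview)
Your proof is correct and follows essentially the same route as the paper's own argument: identify the graph of $W_{g_0}$ with the Borel sublevel set $\{G+L_0\le 0\}$ via Corollary \ref{alana}, deduce that $\A_{g_0}$ is analytic as its projection, and conclude Borel measurability of $W_{g_0}$ from the Graph Theorem \ref{grapho}. The only cosmetic difference is that you write the graph as $B\cap(\A_{g_0}\times\R^N)$, which is redundant since Corollary \ref{alana} already gives $B\subset\A_{g_0}\times\R^N$; the paper simply writes $\mathrm{graph}\,W_{g_0}=B$.
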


\begin{proof}
From Corollary \ref{alana} we have
\[
\mathrm{graph}\, W_{g_0}
=
\{ (x,t,q) \in \T^N \times [0,T] \times \R^N
\mid G(x,t,q) + L_0(x,t,q) \leq 0\}.
\]

Since $G$ and $L_0$ are Borel, the graph is Borel in
$\T^N \times (0,T) \times \R^N$.
Thus $\A_{g_0}$, its projection, is analytic.
The claim follows from Theorem \ref{grapho}.
\end{proof}

\smallskip

We directly derive from Lemma \ref{trieste} and Theorem \ref{extension}:

\begin{Proposition}\label{borel}
The map $W_{g_0} : \A_{g_0} \to \R^N$ can be extended to a Borel map defined on
all of $\T^N \times [0,T]$.
\end{Proposition}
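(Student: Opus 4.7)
The plan is to read Proposition \ref{borel} as an immediate corollary of the two results that immediately precede it. Lemma \ref{trieste} has already done all of the substantive work: it has packaged, via the Graph Theorem applied to the inequality $G(x,t,q)+L_0(x,t,q)\leq 0$, both that $\A_{g_0}$ is an analytic subset of $\T^N\times[0,T]$ and that $W_{g_0}:\A_{g_0}\to\R^N$ is a Borel map in the sense of Definition \ref{srimap}. So the only thing left to carry out is the extension from $\A_{g_0}$ to the ambient product space.

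Concretely, I would instantiate the Extension Theorem (Theorem \ref{extension}) with $X=\T^N\times[0,T]$, $Y=\R^N$, $C=\A_{g_0}$, and $f=W_{g_0}$. The hypotheses that need checking are minimal: $X$ must be Polish, which is immediate since $\T^N\times[0,T]$ is a compact metric space; $C$ must be an arbitrary subset of $X$, which is true; and $f$ must be Borel on $C$ in the relative sense of Definition \ref{srimap}, which is exactly the content of Lemma \ref{trieste}. Note in particular that one does \emph{not} need $\A_{g_0}$ itself to be Borel (which is fortunate, since we only know it is analytic). The conclusion of the Extension Theorem then directly yields a Borel map $\widetilde W_{g_0}:\T^N\times[0,T]\to\R^N$ with $\widetilde W_{g_0}\restriction_{\A_{g_0}}=W_{g_0}$, which is the extension asserted by the proposition.

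I do not anticipate any genuine obstacle here: the only care point is bookkeeping the distinction between \emph{Borel as a subset of $X$} and \emph{Borel in $C$}, which is precisely why Definition \ref{srimap} was set up earlier. The values of the extension outside $\A_{g_0}$ can of course be anything Borel (for instance constantly $0$), since the proposition claims nothing about them.
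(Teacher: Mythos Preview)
Your proposal is correct and matches the paper's approach exactly: the paper simply states that the proposition follows directly from Lemma \ref{trieste} and Theorem \ref{extension}, which is precisely the instantiation you carry out.
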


\smallskip

\begin{Notation}
We continue to denote the extended map by $W_{g_0}$.
\end{Notation}

\smallskip

If the LO solution $v$ is differentiable in space and time, we straightforwardly get:

\begin{Proposition}\label{sabbath}
If $v$ is differentiable in space and time at $(x_0,t_0)$ then
$(x_0,t_0) \in \A_{g_0}$ and
\[
H^0_p(x_0,t_0,-Dv(x_0,t_0)) = W_{g_0}(x_0,t_0).
\]
\end{Proposition}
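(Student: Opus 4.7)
The plan is to pick the natural candidate $q^* := H^0_p(x_0,t_0,-Dv(x_0,t_0))$ and verify that it satisfies the defining equation of $\mathcal W_{g_0}(x_0,t_0)$, namely $D_{(q^*,1)} v(x_0,t_0) = -L_0(x_0,t_0,q^*)$. Once this is checked, $(x_0,t_0) \in \A_{g_0}$ and the single-valuedness established in Proposition \ref{salgado} forces $W_{g_0}(x_0,t_0) = q^*$.

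The first step exploits the assumed differentiability: for every $q \in \R^N$
\[
D_{(q,1)} v(x_0,t_0) = Dv(x_0,t_0)\cdot q + v_t(x_0,t_0),
\]
so Lemma \ref{prealana} rewrites as $v_t(x_0,t_0) + Dv(x_0,t_0) \cdot q \geq -L_0(x_0,t_0,q)$ for every $q$. Taking the supremum over $q$ and recognizing the Fenchel conjugate of $L_0$, one obtains
\[
v_t(x_0,t_0) \geq H_0(x_0,t_0,-Dv(x_0,t_0)).
\]

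The reverse inequality is a classical consequence of the viscosity--solution property of $v$: at a differentiability point, the pair $(Dv(x_0,t_0), v_t(x_0,t_0))$ belongs to both the superdifferential and the subdifferential of $v$, so the two viscosity inequalities for \eqref{HJ-} collapse to the classical equality
\[
v_t(x_0,t_0) = H_0(x_0,t_0,-Dv(x_0,t_0)).
\]
Assumption {\bf (H4)} together with the strict convexity \eqref{H10bis} now guarantee that the Fenchel supremum above is attained at the unique point $q^* = H^0_p(x_0,t_0,-Dv(x_0,t_0))$, where
\[
H_0(x_0,t_0,-Dv(x_0,t_0)) = -Dv(x_0,t_0)\cdot q^* - L_0(x_0,t_0,q^*).
\]
Plugging this identity into the directional--derivative formula yields
\[
D_{(q^*,1)} v(x_0,t_0) = Dv(x_0,t_0)\cdot q^* + v_t(x_0,t_0) = -L_0(x_0,t_0,q^*),
\]
so $q^* \in \mathcal W_{g_0}(x_0,t_0)$, and Proposition \ref{salgado} identifies it with $W_{g_0}(x_0,t_0)$.

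The only step that is not sheer bookkeeping is the passage from viscosity to classical solution at $(x_0,t_0)$, but that is a well-known and elementary fact for any viscosity solution differentiable at a point and requires no additional hypotheses here; should one prefer to avoid invoking it, the equality can be re-derived directly in this setting by applying the one-sided comparisons of Lemma \ref{sub} along the curve $\tau \mapsto x_0 + (\tau-t_0) q$ in both time directions and optimizing over $q$. Either way, the rest of the argument is the essentially algebraic manipulation of the Fenchel equality sketched above.
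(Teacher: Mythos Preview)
Your argument is correct and is essentially the paper's proof spelled out in more detail: the paper writes the Fenchel identity $L_0(x_0,t_0,q^*) = -Dv(x_0,t_0)\cdot q^* - v_t(x_0,t_0)$ directly (implicitly using $v_t = H_0(x_0,t_0,-Dv)$ at a differentiability point) and reads off $D_{(q^*,1)}v = -L_0(q^*)$, while you make the classical--solution step explicit. One small caveat on your closing remark: Lemma \ref{sub} applied ``in both time directions'' still yields only the inequality $v_t + Dv\cdot q \ge -L_0(q)$, hence only $v_t \ge H_0(-Dv)$; the reverse inequality genuinely requires the viscosity (sub/super)solution property, so that alternative route does not avoid it.
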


\begin{proof}
It is immediate that $(x_0,t_0) \in \A_{g_0}$.

We have
\[
L_0(x_0,t_0,H^0_p(x_0,t_0,-Dv(x_0,t_0)))
=
- Dv(x_0,t_0)\cdot H^0_p(x_0,t_0,-Dv(x_0,t_0))
- v_t(x_0,t_0),
\]
and consequently
\begin{align*}
D_{(H^0_p(x_0,t_0,-Dv(x_0,t_0)),1)} v(x_0,t_0)
&= v_t(x_0,t_0)
  + Dv(x_0,t_0)\cdot H^0_p(x_0,t_0,-Dv(x_0,t_0)) \\
&= -L_0(x_0,t_0,H^0_p(x_0,t_0,-Dv(x_0,t_0))),
\end{align*}
which shows the assertion.
\end{proof}

\smallskip

If $v$ is differentiable only in space, the analysis requires more care and will be
carried out in Section \ref{comparozzi}.

\bigskip

\section{$g_0$--optimality and continuity equation}\label{continual}

We introduce a strengthened superlinearity assumption;

\begin{itemize}
  \item[{\bf (H5)}] There exists  $\eps_0 >0$ such that
  \[\lim_{|p| \to + \infty} \frac{H_0(x,t,p)}{|p|^{1+\eps_0}} = + \infty\]
  uniformly for $(x,t)$ varying in $\T^N \times [0,T]$.
\end{itemize}

This is equivalent to

\[  \lim_{|q| \to + \infty} \frac{L_0(x,t,q)}{|q|^{(1+\eps_0)^*}} = + \infty
  \txt{uniformly for $(x,t)$ varying in $\T^N \times [0,T]$,}
\]
where $(1+\eps_0)^*$ is the conjugate exponent of $1+\eps_0$. In particular,
this implies

\begin{equation}\label{H11bis}
L_0(x,t,q) \geq a + b|q|^{(1+\eps_0)^*}
\end{equation}
for suitable positive constants $a$,$b$, and all $(x,t,q) \in \T^N \times [0,T] \times \R^N$.

\smallskip

We recall two notions.

\begin{Definition}\label{precontinua}
 We say that a curve of measures $\mu(t)$, $\mu:[0,T] \to \bP(\T^N)$,  is {\em $(1+\eps_0)^*$--absolutely continuous} if
 \[d_W(\mu(t_1),\mu(t_2)) \leq \int_{t_1}^{t_2} \varphi(t) \, dt\]
 for a  function $\varphi \in L^{(1+\eps_0)^*}([0,T])$ and any $t_1 < t_2 \in [0,T]$.
\end{Definition}

\smallskip

\begin{Definition}\label{continua}
 A narrowly continuous  curve of measures $\mu(\cdot):[0,T] \to \bP(\T^N)$   is a solution  to the {\em continuity equation}  driven by $W_{g_0}$ in $[0,T]$
\begin{equation}\label{equ2}
 \frac \partial{\partial t} \mu(t) + \nabla \cdot (\mu(t) \, W_{g_0}(x,t) ) =0  \txt{in $\T^N \times [0,T]$}
\end{equation}
if  the following chain rule holds
\begin{equation}\label{equ1}
\int_0^T \langle \mu(t),  \varphi_t(\cdot, t) + D \varphi(\cdot,t) \cdot   W_{g_0}(\cdot,t) \rangle \, dt = \langle \mu(T),\varphi(\cdot ,T) \rangle - \langle \mu(0), \varphi(\cdot, 0) \rangle
\end{equation}
for every function $\varphi: \T^N \times [0,T] \to \R$ of class $C^1$.
\end{Definition}

\smallskip

 In this section we prove:

 \begin{Theorem}\label{assone}  Let $g_0$ be a continuous final datum, under the assumptions   {\bf(H1)}--{\bf(H4)}   any $g_0$--optimal measure $\xi \in \bP(\G)$ satisfies:
 \begin{itemize}
   \item[(i)] the evaluation curve $\xi(t)$ is a solution of the continuity equation driven by $W_{g_0}$.
   \end{itemize}
   If  in addition {\bf (H5)}  holds, then
   \begin{itemize}
   \item[(ii)] $\xi(t)$ is $(1+\eps_0)^*$--absolutely continuous.
 \end{itemize}
 \end{Theorem}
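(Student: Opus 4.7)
The strategy is to reduce both parts to a single key fact: every $g_0$--optimal curve $\zeta$ is an integral curve of $W_{g_0}$, i.e.\ $\dot\zeta(t) = W_{g_0}(\zeta(t), t)$ for a.e.\ $t \in (0,T)$. Part (i) then follows by a Fubini argument applied to the chain rule along each curve in $\spt \xi$, and part (ii) by bounding the Wasserstein distance using Hölder's inequality together with \eqref{H11bis}.

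For the preliminary integral--curve step, I fix a $g_0$--optimal $\zeta$. By Remark \ref{dinamo}, $v(\zeta(t_1), t_1) - v(\zeta(t_2), t_2) = \int_{t_1}^{t_2} L_0(\zeta(s), s, \dot\zeta(s))\, ds$ for any $t_1 < t_2$. By Proposition \ref{master}, at a.e.\ $t \in (0,T)$ the curve $\zeta$ is differentiable at $t$ and $t$ is a Lebesgue point of $s \mapsto L_0(\zeta(s), s, \dot\zeta(s))$, so
\[
\lim_{h \to 0} \frac{v(\zeta(t+h), t+h) - v(\zeta(t), t)}{h} = -L_0(\zeta(t), t, \dot\zeta(t)).
\]
To convert this along--curve derivative into the straight--line directional derivative entering the definition of $\mathcal W_{g_0}$, I replace $\zeta(t+h)$ by $\zeta(t) + h\dot\zeta(t)$: the spatial error is $o(h)$ by differentiability of $\zeta$ at $t$, and the resulting error in $v$ is $o(h)$ thanks to the local Lipschitz regularity in $x$ of $v(\cdot, s)$, uniform for $s$ in compact subintervals of $[0, T)$, an LO--theoretic property furnished by Appendix~\ref{LxO}. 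This yields $G(\zeta(t), t, \dot\zeta(t)) \leq -L_0(\zeta(t), t, \dot\zeta(t))$; combined with Lemma \ref{prealana} (the reverse inequality for the $\liminf$), Corollary \ref{alana} concludes $(\zeta(t), t) \in \A_{g_0}$ and $W_{g_0}(\zeta(t), t) = \dot\zeta(t)$.

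For part (i), I fix a $C^1$ test function $\varphi$ on $\T^N \times [0,T]$. Along each AC curve $\zeta$ the usual chain rule gives $\varphi(\zeta(T), T) - \varphi(\zeta(0), 0) = \int_0^T \bigl(\varphi_t(\zeta(t),t) + D\varphi(\zeta(t),t) \cdot \dot\zeta(t)\bigr)\, dt$. Integrating against $\xi$, applying Proposition \ref{change} to the boundary terms and Fubini to the bulk, and then substituting $\dot\zeta(t) = W_{g_0}(\zeta(t), t)$ for $\xi$-a.e.\ $\zeta$ and a.e.\ $t$, I obtain exactly \eqref{equ1}.

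For part (ii), the transport plan $(\mathrm{ev}_{t_1}, \mathrm{ev}_{t_2}) \# \xi$ between $\xi(t_1)$ and $\xi(t_2)$ yields, for $t_1 < t_2$,
\[
d_W(\xi(t_1), \xi(t_2)) \leq \int_\G |\zeta(t_2) - \zeta(t_1)|\, d\xi \leq \int_{t_1}^{t_2} \psi(s)\, ds,
\qquad \psi(s) := \int_\G |\dot\zeta(s)|\, d\xi(\zeta),
\]
the second inequality coming from Fubini. Hölder's inequality against the probability measure $\xi$ gives $\psi(s)^{(1+\eps_0)^*} \leq \int_\G |\dot\zeta(s)|^{(1+\eps_0)^*}\, d\xi$, whence by \eqref{H11bis} and Fubini,
\[
\int_0^T \psi(s)^{(1+\eps_0)^*}\, ds \leq \frac{1}{b}\left(\int_\G A_{L_0}(\zeta)\, d\xi - aT\right),
\]
which is finite because $\int_\G A_{L_0}\, d\xi = \langle \xi(0), v(\cdot, 0)\rangle - \langle \xi(T), g_0\rangle < \infty$ by Proposition \ref{optimum}. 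Hence $\psi \in L^{(1+\eps_0)^*}([0,T])$, proving $(1+\eps_0)^*$--absolute continuity. The main obstacle in the whole argument is the preliminary step: bridging along--curve and straight--line differentiation of the merely continuous function $v$, which forces the use of the LO--specific local Lipschitz regularity available only for $s < T$.
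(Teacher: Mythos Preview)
Your overall strategy---reduce everything to the fact that each $g_0$--optimal curve is an integral curve of $W_{g_0}$, then apply Fubini for (i) and a growth bound for (ii)---matches the paper exactly, and your treatments of (i) and (ii) are essentially the paper's arguments (the paper phrases (ii) via \cite[Theorem 8.3.1]{AmS08} by checking $\int_0^T \langle \xi(t), |W_{g_0}|^{(1+\eps_0)^*}\rangle\, dt < \infty$, but your direct transport--plan/H\"older route is equivalent).

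The genuine gap is in your preliminary step. You write that the discrepancy between $v(\zeta(t)+h\dot\zeta(t),t+h)$ and $v(\zeta(t+h),t+h)$ is $o(h)$ ``thanks to the local Lipschitz regularity in $x$ of $v(\cdot,s)$, uniform for $s$ in compact subintervals of $[0,T)$, an LO--theoretic property furnished by Appendix~\ref{LxO}.'' Appendix~\ref{LxO} furnishes no such property: under {\bf (H1)}--{\bf (H4)} alone it only shows that $v$ is continuous (Propositions~\ref{iran}, \ref{iranbis}). The paper repeatedly emphasizes that $v$ is in general merely continuous (see the Remark after \eqref{defcA} and Remark~\ref{barlone}); spatial Lipschitz regularity of the value function would require additional structure on $H_0$ in $x$ that is deliberately not assumed here. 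So you have asserted away precisely the difficulty the paper is designed to overcome.

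The paper's actual bridge from the along--curve derivative to the straight--line directional derivative is Lemma~\ref{terme}, proved via Lemma~\ref{mowgli}. The idea is: given an infinitesimal sequence $h_n$, one manufactures a secondary scale $\rho_n>0$ with $\rho_n/h_n\to 0$ such that the straight segment from $\zeta(t_0+h_n\mp\rho_n)$ to $\zeta(t_0)+h_n\dot\zeta(t_0)$ has speed at most $|\dot\zeta(t_0)|+1$. One then compares $v$ at the two nearby space--time points not by a Lipschitz estimate on $v$, but by the sub--optimality inequality of Lemma~\ref{sub} along this short segment (cost $\leq b\rho_n$) together with the optimality of $\zeta$ on the adjacent piece (cost $\geq m_{L_0}\rho_n$). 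A contradiction with $\rho_n/h_n\to 0$ then rules out any positive gap, yielding \eqref{terme1}. This argument uses only continuity of $v$ and the variational inequalities, which is why it works at the stated generality.
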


 \smallskip

 We start with some preliminaries:

 \begin{Lemma}\label{mowgli} Let $\zeta$, $h_n$ be a curve defined in $[0,T]$ and an infinitesimal negative sequence, respectively. Assume that $\zeta$ is differentiable at some $t_0 \in (0,T)$, then there exists $\rho_n >0$ with  $\lim_n \frac{\rho_n}{h_n} = 0$
 and
 \[ \frac{|\zeta(t_0) +( h_n - \rho_n)  \dot \zeta(t_0) - \zeta(t_0+h_n - \rho_n)|}{\rho_n} \leq 1\]
 for all $n$ sufficiently large .

 If instead $h_n$ is infinitesimal and positive, then there exists $\rho_n >0$ with  $\lim_n \frac{\rho_n}{h_n} = 0$
 and
 \[ \frac{|\zeta(t_0) +( h_n + \rho_n)  \dot \zeta(t_0) - \zeta(t_0+h_n + \rho_n)|}{\rho_n} \leq 1\]
 for all $n$ sufficiently large.
 \end{Lemma}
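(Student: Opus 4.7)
The plan is to reduce both cases to the same Taylor-type estimate and then calibrate $\rho_n$ via a standard square-root rescaling. Write
\[
\eps(s):=\zeta(t_0+s)-\zeta(t_0)-s\,\dot\zeta(t_0),
\]
so that differentiability of $\zeta$ at $t_0$ is equivalent to $|\eps(s)|/|s|\to 0$ as $s\to 0$. Define the nondecreasing modulus
\[
\omega(r):=\sup\!\left\{\frac{|\eps(u)|}{|u|}\;:\;0<|u|\le r\right\}
\]
for $r$ sufficiently small, so that $\omega(r)\to 0$ as $r\to 0^+$ and $|\eps(u)|\le|u|\,\omega(|u|)$ by construction.

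In both parts of the lemma the numerator on the left--hand side equals $|\eps(h_n\mp\rho_n)|$, and a direct sign check (opposite signs in part 1, same sign in part 2) gives $|h_n\mp\rho_n|=|h_n|+\rho_n$ in each case. Writing $s_n:=|h_n|$, the lemma therefore reduces to producing $\rho_n>0$ with $\rho_n/s_n\to 0$ and
\[
|\eps(h_n\mp\rho_n)|\le \rho_n
\quad\text{for all $n$ sufficiently large.}
\]

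I would then choose $\rho_n:=s_n\sqrt{\omega(2s_n)}$, with any positive infinitesimal substituted in the degenerate case $\omega(2s_n)=0$. By construction $\rho_n/s_n=\sqrt{\omega(2s_n)}\to 0$, so in particular $\rho_n\le s_n$ for $n$ large, whence $s_n+\rho_n\le 2s_n$ and $\omega(s_n+\rho_n)\le\omega(2s_n)$. Combining these estimates gives
\[
|\eps(h_n\mp\rho_n)|\le (s_n+\rho_n)\,\omega(s_n+\rho_n)\le 2s_n\,\omega(2s_n)=2\rho_n\sqrt{\omega(2s_n)}\le \rho_n
\]
as soon as $\omega(2s_n)\le 1/4$, which holds for all sufficiently large $n$. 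This proves both assertions of the lemma simultaneously.

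This is essentially routine Taylor bookkeeping; the only real subtlety is calibrating $\rho_n$ so that $\rho_n/h_n\to 0$ yet $\rho_n$ still dominates the Taylor remainder evaluated at the shifted argument $h_n\mp\rho_n$. The square-root rescaling of the modulus $\omega$ is exactly what makes both demands compatible.
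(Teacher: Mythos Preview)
Your argument is correct and takes a genuinely different route from the paper. The paper proceeds by a case split and an intermediate--value argument: it defines $f_n(\rho)$ as the quotient in the statement, observes that $f_n(|h_n|)\to 0$, and then either the desired inequality holds on all of $(0,|h_n|]$ (in which case one takes $\rho_n=h_n^2$), or it fails somewhere, and continuity provides a $\rho_n$ with $f_n(\rho_n)=1$; differentiability at $t_0$ then forces $\rho_n/h_n\to 0$ along any subsequence of the second kind. Your approach is instead fully constructive: you encode differentiability via the modulus $\omega$, reduce the inequality to $|\eps(h_n\mp\rho_n)|\le\rho_n$, and then set $\rho_n=|h_n|\sqrt{\omega(2|h_n|)}$, which makes both the smallness of $\rho_n/h_n$ and the required bound transparent. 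Your method avoids the dichotomy and the subsequence bookkeeping, and gives an explicit $\rho_n$ in terms of the modulus; the paper's argument, on the other hand, never needs to introduce $\omega$ or any auxiliary quantity beyond $f_n$ itself. Both are short and elementary; yours is arguably cleaner.
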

 \begin{proof} We only treat the case $h_n <0$, the other one being analogous.  For each  $n$, define
 \[f_n:\rho \mapsto \frac{|\zeta(t_0) +( h_n - \rho)  \dot \zeta(t_0) - \zeta(t_0+h_n - \rho)|}{\rho}\]
in the interval $(0,|h_n|]$. Since $\zeta$ is differentiable at $t_0$,
one has $\lim_n f_n(|h_n|)=0$,
hence
\begin{equation}\label{mowgli0}
  f_n(|h_n|) <1 \txt{for all large $n$ .}
\end{equation}
 For such an $n$, either
 \begin{equation}\label{mowgli1}
   \sup_{(0,|h_n|]} f_n >1
 \end{equation}
in which case, by continuity, there exists $\rho_n$ with $f_n(\rho_n)=1$, or else
\begin{equation}\label{mowgli2}
\sup_{(0,|h_n|]} f_n \leq 1,
\end{equation}
in  which case we  simply take $\rho_n = h_n^2$.  If for a  subsequence $n_k$ \eqref{mowgli2} holds then the stated inequality  is proved for $\rho_{n_k}$. If instead there exists a subsequence  $n_k$ satisfying \eqref{mowgli1}, then   by the differentiability of $\zeta$ at $t_0$ we  have
\[\lim_{n_k}  \frac{\rho_{n_k}}{h_{n_k} - \rho_{n_k}} = \lim_{n_k} \frac{|\zeta(t_0) +( h_{n_k} - \rho_{n_k})  \dot \zeta(t_0) - \zeta(t_0+h_{n_k} - \rho_{n_k})|}{h_{n_k} - \rho_{n_k}} =0 \]
and consequently\[ \lim_{n_k} \frac{h_{n_k}}{\rho_{n_k}}= \lim \frac{h_{n_k}-\rho_{n_k}}{\rho_{n_k}} + \lim \frac{\rho_{n_k}}{\rho_{n_k}} = -\infty,\]
which  completes the proof.
 \end{proof}

\smallskip

\begin{Lemma}\label{terme}
Let $\zeta:[0,T] \to \T^N$ be a $g_0$--optimal curve, and assume that
$t_0 \in (0,T)$ is a differentiability point of both $\zeta$ and of the
function $t \mapsto v(\zeta(t),t)$, with
\[
\frac d{dt} v(\zeta(t_0),t_0)
= - L_0(\zeta(t_0), t_0,\dot\zeta(t_0)).
\]
Then
\[
(\zeta(t_0),t_0) \in \A_{g_0}
\quad\text{and}\quad
\dot\zeta(t_0) = W_{g_0}(\zeta(t_0),t_0).
\]
\end{Lemma}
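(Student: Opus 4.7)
Write $x_0 = \zeta(t_0)$ and $q_0 = \dot\zeta(t_0)$. By \eqref{defcW}--\eqref{defcA} together with the uniqueness given in Proposition \ref{salgado}, the conclusion reduces to showing that the directional derivative $D_{(q_0,1)} v(x_0, t_0)$ exists and equals $-L_0(x_0,t_0,q_0)$. Lemma \ref{prealana} already provides the lower bound $\liminf_{h \to 0} \frac{v(x_0+h q_0,\, t_0+h) - v(x_0,t_0)}{h} \geq -L_0(x_0,t_0,q_0)$, so the entire task is to establish the matching bound $\limsup \leq -L_0(x_0,t_0,q_0)$.

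The natural comparison is between $v$ at the perturbed point $(x_0+h_n q_0,\, t_0+h_n)$ and $v$ along the optimal curve $\zeta$, where the hypothesized one--sided derivative lives. Since $v$ has no assumed spatial regularity, the two cannot be compared directly; I insert a secondary time shift $\rho_n$ produced by Lemma \ref{mowgli} --- in the positive case $\rho_n > 0$ with $\rho_n/h_n \to 0$ and $|x_0 + (h_n+\rho_n) q_0 - \zeta(t_0+h_n+\rho_n)| \leq \rho_n$, and the symmetric statement with $h_n-\rho_n$ in the negative case. The linear interpolant between $(x_0 + h_n q_0,\, t_0+h_n)$ and the corresponding point on $\zeta$ (whichever is at the later time) has velocity $q_0 + \varepsilon_n/\rho_n$ with $|\varepsilon_n|\leq \rho_n$, hence lying in the fixed compact ball $\{q \mid |q - q_0| \leq 1\}$, so its $L_0$--cost over a time interval of length $\rho_n$ is bounded by $K\rho_n$ for some constant $K$ independent of $n$. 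Lemma \ref{sub} applied to this interpolant yields $v(x_0+h_n q_0,\, t_0+h_n) \leq v(\zeta(t_0+h_n+\rho_n),\, t_0+h_n+\rho_n) + K\rho_n$ when $h_n > 0$, and $v(x_0+h_n q_0,\, t_0+h_n) \geq v(\zeta(t_0+h_n-\rho_n),\, t_0+h_n-\rho_n) - K\rho_n$ when $h_n < 0$. Expanding the $v(\zeta(\cdot),\cdot)$ term via the hypothesized derivative $-L_0(x_0,t_0,q_0)$, subtracting $v(x_0, t_0)$, dividing by $h_n$ with the appropriate sign, and using $\rho_n/h_n \to 0$ delivers $\limsup_n \frac{v(x_0+h_n q_0,\, t_0+h_n) - v(x_0,t_0)}{h_n} \leq -L_0(x_0,t_0,q_0)$ in both cases.

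The genuine difficulty is the absence of any spatial Lipschitz control on $v$: the raw difference $|v(x_0+h_n q_0,\, t_0+h_n) - v(\zeta(t_0+h_n),\, t_0+h_n)|$ cannot be bounded by a constant times the $o(h_n)$ spatial displacement $x_0+h_n q_0 - \zeta(t_0+h_n)$. Lemma \ref{mowgli} is exactly what removes this obstacle: it calibrates $\rho_n$ simultaneously large enough that the auxiliary interpolant's velocity stays bounded (so $L_0$ along it is uniformly controlled) and small enough to be negligible against $h_n$ (so its contribution vanishes after division). Once this calibration is in hand, every other ingredient --- Lemma \ref{sub}, the given differentiability of $t \mapsto v(\zeta(t),t)$, and Lemma \ref{prealana} --- is plugged in routinely.
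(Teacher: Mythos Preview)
Your proposal is correct and follows essentially the same route as the paper: reduce to the $\limsup$ inequality via Lemma \ref{prealana}, invoke Lemma \ref{mowgli} to produce the auxiliary scale $\rho_n$, build the linear interpolant with velocity in $\overline{B(q_0,1)}$, and apply Lemma \ref{sub}. The only difference is organizational: the paper argues by contradiction (assuming a gap $a>0$ and closing a loop through the $g_0$--optimality of $\zeta$ to reach an impossible inequality), whereas you estimate directly, comparing $v(x_0+h_nq_0,t_0+h_n)$ to $v(\zeta(t_0+h_n\pm\rho_n),t_0+h_n\pm\rho_n)$ via Lemma \ref{sub} and then expanding the latter using the assumed derivative of $t\mapsto v(\zeta(t),t)$. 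Your version is slightly more streamlined, but the key mechanism --- the calibration of $\rho_n$ by Lemma \ref{mowgli} to keep the interpolant velocity bounded while $\rho_n/h_n\to 0$ --- is identical.
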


\begin{proof}
According to Corollary \ref{alana}, it is enough to show that
\begin{eqnarray}
  \limsup_{h \to 0} \frac {v(\zeta(t_0) + h \dot\zeta(t_0),t_0+h) -v(\zeta(t_0),t_0)}h &\leq& \lim_{h \to 0} \frac{v(\zeta(t_0+ h), t_0+h) - v(\zeta(t_0), t_0)}h  \nonumber \\&=& - L_0(\zeta(t_0),t_0,\dot \zeta(t_0)). \label{terme1}
 \end{eqnarray}

Let $(h_n)$ be any infinitesimal sequence such that
\[
\lim_{n\to\infty}
\frac{v(\zeta(t_0)+h_n\dot\zeta(t_0),t_0+h_n)-v(\zeta(t_0),t_0)}{h_n}
=
\limsup_{h\to 0}
\frac{v(\zeta(t_0)+h\dot\zeta(t_0),t_0+h)-v(\zeta(t_0),t_0)}{h}.
\]
Up to subsequences, we may assume that either all $h_n<0$ or all $h_n>0$.

 Case 1: \(h_n<0\)

\smallskip
Assume first that all $h_n<0$.
If \eqref{terme1} fails, then there exists $a>0$ such that for $n$ large,
\begin{equation}\label{terme00}
v(\zeta(t_0)+h_n\dot\zeta(t_0),t_0+h_n)
\;\le\;
v(\zeta(t_0+h_n),t_0+h_n) + a h_n.
\end{equation}

Let $(\rho_n)$ be the positive sequence associated with $(h_n)$ by
Lemma~\ref{mowgli}.
Since $\zeta$ is $g_0$--optimal,
\[
v(\zeta(t_0+h_n-\rho_n),t_0+h_n-\rho_n)
-
v(\zeta(t_0+h_n),t_0+h_n)
=
\int_{t_0+h_n-\rho_n}^{t_0+h_n}
L_0(\zeta,t,\dot\zeta)\,dt
\ge m_{L_0}\rho_n.
\]
Thus by \eqref{terme00},
\begin{equation}\label{terme0}
v(\zeta(t_0)+h_n\dot\zeta(t_0),t_0+h_n)
\le
v(\zeta(t_0+h_n-\rho_n),t_0+h_n-\rho_n)
+ a h_n - m_{L_0}\rho_n.
\end{equation}

For $n$ large, let $I_n=[t_0+h_n-\rho_n,\; t_0+h_n]$, and define the linear
interpolation
\[
\zeta_n(t)
=
\left(1 - \frac{t-(t_0+h_n-\rho_n)}{\rho_n}\right)
\zeta(t_0+h_n-\rho_n)
+
\frac{t-(t_0+h_n-\rho_n)}{\rho_n}
\bigl(\zeta(t_0) + h_n\dot\zeta(t_0)\bigr),
\qquad t\in I_n.
\]

Lemma~\ref{mowgli} gives
\[
|\dot\zeta_n(t)|
\le |\dot\zeta(t_0)| + 1
\qquad\text{for all }t\in I_n,n\in\N.
\]
Hence
\begin{equation}\label{terme3}
v(\zeta(t_0+h_n-\rho_n),t_0+h_n-\rho_n)
-
v(\zeta(t_0)+h_n\dot\zeta(t_0),t_0+h_n)
\le
\int_{I_n} L_0(\zeta_n,t,\dot\zeta_n)\,dt
\le b\rho_n
\end{equation}
for some constant $b>0$.

Combining \eqref{terme0} and \eqref{terme3}, we obtain
\[
-a h_n + m_{L_0}\rho_n
\le
b\rho_n,
\]
which is impossible because $h_n<0$ and
\(\rho_n/h_n \to 0\).
Thus \eqref{terme1} holds when $h_n<0$.

\smallskip

 Case 2: \(h_n>0\)

The argument is analogous.
If \eqref{terme1} fails, then for some $a>0$ and $n$ large,
\[
v(\zeta(t_0)+h_n\dot\zeta(t_0),t_0+h_n)
\ge
v(\zeta(t_0+h_n),t_0+h_n) + a h_n.
\]

Let \(\rho_n>0\) be the sequence given by Lemma~\ref{mowgli} for $h_n>0$.
Since $\zeta$ is $g_0$--optimal,
\[
v(\zeta(t_0+h_n),t_0+h_n)
-
v(\zeta(t_0+h_n+\rho_n),t_0+h_n+\rho_n)
=
\int_{t_0+h_n}^{t_0+h_n+\rho_n}
L_0(\zeta,t,\dot\zeta)\,dt
\ge m_{L_0}\rho_n.
\]
Hence
\begin{equation}\label{terme21}
v(\zeta(t_0)+h_n\dot\zeta(t_0),t_0+h_n)
\ge
v(\zeta(t_0+h_n+\rho_n),t_0+h_n+\rho_n)
+ a h_n + m_{L_0}\rho_n.
\end{equation}

Let $I_n=[t_0+h_n,\; t_0+h_n+\rho_n]$, and define the linear interpolation
\[
\zeta_n(t)
=
\left(1-\frac{t-(t_0+h_n)}{\rho_n}\right)
\bigl(\zeta(t_0)+h_n\dot\zeta(t_0)\bigr)
+
\frac{t-(t_0+h_n)}{\rho_n}
\zeta(t_0+h_n+\rho_n),
\qquad t\in I_n.
\]

As before,
\[
|\dot\zeta_n(t)| \le |\dot\zeta(t_0)|+1,
\quad t\in I_n,
\]
and therefore,
\begin{equation}\label{terme23}
v(\zeta(t_0)+h_n\dot\zeta(t_0),t_0+h_n)
-
v(\zeta(t_0+h_n+\rho_n),t_0+h_n+\rho_n)
\le
\int_{I_n} L_0(\zeta_n,t,\dot\zeta_n)\,dt
\le b\rho_n.
\end{equation}

Combining \eqref{terme21} and \eqref{terme23} yields
\[
a h_n + m_{L_0}\rho_n \le b\rho_n,
\]
which is impossible since \(h_n>0\) and \(\rho_n/h_n\to 0\).

\smallskip

Thus in both cases \eqref{terme1} holds, and the lemma follows.
\end{proof}

\smallskip

The next propositions shows, among other things, that for any $x_0 \in \T^N$  the vector field $W_{g_0}$ has an integral curve $\zeta$ with $\zeta(0)=x_0$.

 \smallskip

\begin{Proposition}\label{simple}  Any $g_0$--optimal curve  $\zeta$   is  an integral curve of $W_{g_0}$ with
\begin{equation}\label{simple00}
 (\zeta(t),t) \in \A_{g_0} \txt{for a.e. $t \in [0,T]$.}
\end{equation}
\end{Proposition}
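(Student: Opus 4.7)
The goal is to verify, for almost every $t \in [0,T]$, the hypotheses of Lemma \ref{terme} along the curve $\zeta$; once this is done the conclusion $(\zeta(t),t) \in \A_{g_0}$ and $\dot\zeta(t) = W_{g_0}(\zeta(t),t)$ follows immediately.

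The plan is to start from the basic identity provided by Remark \ref{dinamo}: for every $t \in [0,T)$,
\[
v(\zeta(t),t) = g_0(\zeta(T)) + \int_t^T L_0(\zeta,s,\dot\zeta)\,ds.
\]
Since $\zeta$ has finite action, it is AC, hence differentiable at a.e.\ $t_0 \in (0,T)$; and since $s \mapsto L_0(\zeta(s),s,\dot\zeta(s))$ is integrable on $[0,T]$ (being bounded below by $m_{L_0}$ and such that its integral equals $A_{L_0}(\zeta) < +\infty$), almost every $t_0$ is a Lebesgue point of this function. Intersecting these two full-measure sets yields a set $\mathcal N \subset (0,T)$ of full Lebesgue measure on which $\zeta$ is differentiable and the above integral representation gives
\[
\frac{d}{dt}\, v(\zeta(t_0),t_0) = -L_0(\zeta(t_0), t_0, \dot\zeta(t_0)).
\]

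At any such $t_0 \in \mathcal N$, the hypotheses of Lemma \ref{terme} are met: $\zeta$ is differentiable at $t_0$, the scalar function $t \mapsto v(\zeta(t),t)$ is differentiable at $t_0$, and its derivative equals $-L_0(\zeta(t_0),t_0,\dot\zeta(t_0))$. Lemma \ref{terme} then gives $(\zeta(t_0),t_0) \in \A_{g_0}$ and $\dot\zeta(t_0) = W_{g_0}(\zeta(t_0),t_0)$, which is precisely the claim for $t = t_0$.

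There is no real obstacle here: the core work was already absorbed into Lemma \ref{terme}, and what remains is only the almost-everywhere verification of its hypotheses via Lebesgue differentiation applied to the integral representation of $v(\zeta(t),t)$. The only mild subtlety is that Remark \ref{dinamo} is stated on $[0,T)$, but the conclusion of the proposition is for a.e. $t \in [0,T]$, so there is no need to address the isolated endpoint $t = T$. The existence of an integral curve with $\zeta(0) = x_0$ announced before the statement is then an immediate corollary: for any $x_0 \in \T^N$ there exists, by the results of Section \ref{LLOO}, a $g_0$-optimal curve starting at $x_0$, and the proposition says it is integral for $W_{g_0}$.
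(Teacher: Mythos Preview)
The proposal is correct and follows essentially the same approach as the paper's proof: use the dynamic programming identity from Remark~\ref{dinamo} to express $t\mapsto v(\zeta(t),t)$ as a constant minus $\int_0^t L_0(\zeta,s,\dot\zeta)\,ds$, differentiate via the Lebesgue differentiation theorem on the full-measure set of differentiability/Lebesgue points, and then invoke Lemma~\ref{terme}. The only cosmetic difference is that the paper writes out the one-sided difference quotients explicitly, whereas you differentiate the integral formula directly; both arguments are equivalent.
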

\begin{proof}  By the dynamical programming principle, see Remark \ref{dinamo}, for any $t \in (0,T)$, $h$ small in modulus
\begin{eqnarray}
  v(\zeta(t ),t) &=& v(\zeta(t+h),t+h) + \int_{t}^{t+h} L_0(\zeta(\tau),\tau,\dot \zeta(\tau) \, d\tau  \txt{if $h >0$,}
   \label{simple001} \\
  v(\zeta(t+h ),t+h) &=& v(\zeta(t),t) + \int_{t+h}^{t} L_0(\zeta(\tau),\tau,\dot \zeta(\tau) \, d\tau  \txt{if $h <0$.} \label{simple002}
\end{eqnarray}
Let   $E_\zeta$ be the set   of the differentiability times  of $\zeta$,  which are also Lebesgue  points  of $t \mapsto  L_0(\zeta(t), t, \dot\zeta(t ))$.  Since $\zeta$  is AC and $t \mapsto  L_0(\zeta(t), t, \dot\zeta(t ))$ summable, $E_\zeta$ has  full  measure in $[0,T]$.

For $t \in E _\zeta$, we derive from \eqref{simple001}, \eqref{simple002}
\begin{eqnarray*}
 \lim_{h \to 0^+} \frac {v(\zeta(t+h),t+h)- v(\zeta(t),t)}h & = & \lim_{h \to 0^+}  - \frac  1h \, \int_{t}^{t+h} L_0(\zeta(\tau),\tau,\dot\zeta(\tau)) \, d \tau = - L_0(\zeta(t),t,\dot\zeta(t))\\
 \lim_{h \to 0^-} \frac {v(\zeta(t+h),t+h)- v(\zeta(t),t)}h  &=&  \lim_{h \to 0^-}   \frac  1h \, \int_{t+h}^{t} L_0(\zeta(\tau),\tau,\dot\zeta(\tau)) \, d \tau = - L_0(\zeta(t),t,\dot\zeta(t)).
\end{eqnarray*}
Hence $\zeta$ is differentiable at $t$ and
 \[ \frac d{dt} v(\zeta(t),t)= -L_0(\zeta(t),t,\dot\zeta(t)).\]
The claim   then follows from Lemma \ref{terme}.
\end{proof}

\smallskip

\begin{Corollary}\label{pregigionzi} Let $\zeta$ be a $g_0$--optimal curve and $t_0 \in (0,T)$ a differentiability point of $\zeta$ and a Lebesgue point of $t \mapsto L_0(\zeta(t),t,\dot\zeta(t))$. Then
\[(\zeta(t_0),t_0) \in \A_{g_0} \txt{and} \qquad \dot\zeta(t_0)= W_{g_0}(\zeta(t_0),t_0).\]
\end{Corollary}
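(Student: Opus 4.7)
The plan is to reuse, verbatim, the pointwise argument already carried out inside the proof of Proposition~\ref{simple}, since the corollary is essentially the pointwise version of that result: Proposition~\ref{simple} established that the conclusion holds for every $t$ in the full-measure set $E_\zeta$ of differentiability points of $\zeta$ that are also Lebesgue points of $t\mapsto L_0(\zeta(t),t,\dot\zeta(t))$, and the hypotheses of the corollary say exactly that $t_0 \in E_\zeta$.

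Concretely, I would first invoke the dynamic programming principle in the form of Remark~\ref{dinamo} to write, for $h$ of small modulus,
\[
v(\zeta(t_0+h),t_0+h) - v(\zeta(t_0),t_0)
= -\,\mathrm{sgn}(h)\int_{\min(t_0,t_0+h)}^{\max(t_0,t_0+h)} L_0(\zeta(\tau),\tau,\dot\zeta(\tau))\,d\tau.
\]
Dividing by $h$ and letting $h\to 0^\pm$, the Lebesgue-point hypothesis gives in both cases the limit $-L_0(\zeta(t_0),t_0,\dot\zeta(t_0))$, so $t\mapsto v(\zeta(t),t)$ is differentiable at $t_0$ with
\[
\frac{d}{dt}\,v(\zeta(t),t)\Big|_{t=t_0} = -L_0(\zeta(t_0),t_0,\dot\zeta(t_0)).
\]

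At this point the hypotheses of Lemma~\ref{terme} are fulfilled ($\zeta$ differentiable at $t_0$, $t\mapsto v(\zeta(t),t)$ differentiable at $t_0$ with derivative equal to $-L_0(\zeta(t_0),t_0,\dot\zeta(t_0))$), and its conclusion yields exactly $(\zeta(t_0),t_0)\in\A_{g_0}$ and $\dot\zeta(t_0)=W_{g_0}(\zeta(t_0),t_0)$.

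There is no real obstacle here: the content is packaged in Lemma~\ref{terme} and Proposition~\ref{simple}. The only point worth being explicit about is that the first half of the computation above (passing from the DPP identity to differentiability of $t\mapsto v(\zeta(t),t)$ with value $-L_0(\zeta(t_0),t_0,\dot\zeta(t_0))$) is exactly what Proposition~\ref{simple} verifies at every $t\in E_\zeta$; the corollary just isolates the statement at the single time $t_0$.
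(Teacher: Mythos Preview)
Your proposal is correct and matches the paper's approach: the paper states the corollary without proof, treating it as an immediate consequence of the proof of Proposition~\ref{simple}, which is precisely what you unpack---the dynamic programming identity plus the Lebesgue-point hypothesis yield differentiability of $t\mapsto v(\zeta(t),t)$ at $t_0$ with the right value, and Lemma~\ref{terme} concludes.
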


\smallskip

We derive from Proposition \ref{simple}  and the definition of $g_0$--optimal measure:

\begin{Corollary}\label{velleda} Any $g_0$--optimal measure in $\bP(\G)$ is supported by integral curves of $W_{g_0}$.
\end{Corollary}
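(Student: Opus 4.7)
The plan is essentially to unpack the definitions and immediately invoke Proposition~\ref{simple}. Let $\xi \in \bP(\G)$ be any $g_0$--optimal measure. By Definition~\ref{moptifinal}, this means precisely that $\spt\xi$ is contained in the set of $g_0$--optimal curves in $\G$. So the whole issue is to transfer a pointwise property from each $g_0$--optimal curve to the support of $\xi$.

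Next, I would apply Proposition~\ref{simple} curve by curve: every element of $\spt\xi$, being a $g_0$--optimal curve, is an integral curve of $W_{g_0}$ in the sense that $(\zeta(t),t) \in \A_{g_0}$ for a.e.\ $t$ and $\dot\zeta(t) = W_{g_0}(\zeta(t),t)$ for a.e.\ $t \in [0,T]$. Chaining the two inclusions
\[
\spt\xi \;\subset\; \{\zeta \in \G \mid \zeta \text{ is } g_0\text{--optimal}\}
\;\subset\; \{\zeta \in \G \mid \zeta \text{ is an integral curve of } W_{g_0}\}
\]
delivers the stated conclusion. No additional measurability argument is required because the conclusion is simply an inclusion of supports; we are not integrating the property over $\xi$, only observing it holds at every point of $\spt\xi$.

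There is essentially no obstacle here: the corollary is a genuine corollary, not a theorem in disguise. The only minor point to be careful about is that "integral curve of $W_{g_0}$" is to be understood in the almost-everywhere sense of Proposition~\ref{simple}, i.e.\ including the assertion \eqref{simple00} that $(\zeta(t),t)\in\A_{g_0}$ for almost every $t$, so that the identity $\dot\zeta(t) = W_{g_0}(\zeta(t),t)$ is meaningful. With this convention the proof is one line, which suggests presenting it as a brief observation following directly from the definition of $g_0$--optimal measure and Proposition~\ref{simple}.
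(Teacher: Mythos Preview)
Your proposal is correct and matches the paper's approach exactly: the paper presents this corollary without proof, simply stating that it is derived from Proposition~\ref{simple} and the definition of $g_0$--optimal measure, which is precisely the one-line argument you give.
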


\smallskip

\begin{proof}[{\it Proof of Theorem \ref{assone}}]
  Let  $\varphi$ be a $C^1$ function defined in  $\T^N \times [0,T]$.  By the change of variable formula in Proposition \ref{change},  for each  $t \in [0,T]$
\begin{eqnarray}
   &&\int  [\varphi_t(x,t) + D \varphi(x,t) \cdot W_{g_0}(x,t)] \, d \xi(t) = \label{ritalma1} \\
   &&  \int [\varphi_t(\zeta(t),t) + D \varphi(\zeta(t),t) \cdot W_{g_0}(\zeta(t),t)] \, d\xi. \nonumber
\end{eqnarray}
The functions
\[ (\zeta,t) \mapsto  \zeta(t), \quad (\zeta,t) \mapsto \varphi_t(\zeta(t),t), \quad (\zeta,t) \mapsto  D\varphi(\zeta(t),t)\]
are  continuous and
\[(x,t) \mapsto W_{g_0}(x,t)\]
is Borel, hence the integrand on the right--side of \eqref{ritalma1} is Borel.   By Fubini's Theorem and \eqref{ritalma1}
\begin{eqnarray}
   &&\int_0^T \langle \xi(t), \varphi_t(\cdot,t) + D \varphi(\cdot,t) \cdot W_{g_0}(\cdot,t) \rangle \, dt = \label{ritalma2} \\
   && \int \left [ \int_0^T  \big [ \varphi_t(\zeta(t),t) + D \varphi(\zeta(t),t) \cdot W_{g_0}(\zeta(t),t) \big ] \, dt \right ] \, d\xi.  \nonumber
\end{eqnarray}
Since $\xi$ is supported on integral curves of $W_{g_0}$ (Corollary \ref{velleda}), the rightmost term in the above formula is equal to
\begin{equation}\label{ritalma3}
  \int [ \varphi(\zeta(T),T) - \varphi(\zeta(0),0) ]\, d\xi,
\end{equation}
and  by the change of variable formula
\begin{eqnarray}
  \int  \varphi(\zeta(T),T) \, d\xi &=&  \langle \xi(T), \varphi(\cdot,T) \rangle   \label{ritalma4}\\
  \int  \varphi(\zeta(0),0) \, d\xi &=&  \langle \xi(0), \varphi(\cdot,0). \rangle \label{ritalma5}
\end{eqnarray}
By combining \eqref{ritalma2}, \eqref{ritalma3}, \eqref{ritalma4}, \eqref{ritalma5}, we finally obtain
\[\int_0^T \langle \xi(t), \varphi_t(\cdot,t) + D \varphi(\cdot,t) \cdot W_{g_0}(\cdot,t) \rangle \, dt = \langle \xi(T), \varphi(\cdot,T) \rangle - \langle \xi(0), \varphi(\cdot,0) \rangle\]
which yields (i). We pass to  item (ii). By \cite[Theorem 8.3.1]{AmS08}  it is enough to show  that
\[t \mapsto  \langle \xi(t), |W_{g_0}(t,\cdot)|^{(1+\eps_0)^*}\rangle\]
belongs to $L^1([0,T])$. Using Fubini's Theorem, the $g_0$--optimality of $\xi$ and the growth bound \eqref{H11bis}, we compute
\begin{eqnarray*}
  \int_0^T \langle \xi(t) , |W_{g_0}(t,\cdot)|^{(1+\eps_0)^*}\rangle \, dt  &\leq& \int_0^T \langle \xi(t), c + c_0 L_0(x,t,W_{g_0}(x,t)) \, dt \\
  &=& \int \, \int_0^T \big (c + c_0 L_0(\zeta(t),t,W_{g_0}(\zeta(t),t) \big ) dt \, d\xi\\ &\leq & c \, T + c_0 \, \mathrm{osc} (v),
\end{eqnarray*}
for suitable positive constants $c$, $c_0$, where $\mathrm{osc}$ stands for oscillation and $v$ is the LO solution with final datum $g_0$. This shows that the function above is integrable, and the claim follows.
\end{proof}

\bigskip

\section{Comparison between $W_{g_0}$ and $H^0_p(x,t,-Dv(x,t))$}\label{comparozzi}

Exploiting some results from the previous section, we now establish a connection between $W_{g_0}$ and $H^0_p(x,t,-Dv(x,t))$, just assuming space--differentiability of $v$ (see also Proposition \ref{sabbath}). The main result of the section is:

\smallskip

\begin{Proposition}\label{teogigionzi}
There exists $B \subset \T^N \times [0,T]$ which contains a subset of $\mu_\zeta$--full measure, for any $g_0$--optimal curve $\zeta$, such that
\[
\A_{g_0} \supset B
\quad\hbox{and}\quad
W_{g_0}(x,t) = H^0_p(x,t,-Dv(x,t))
\]
for all $(x,t) \in B$ where $v$ is differentiable in space.
\end{Proposition}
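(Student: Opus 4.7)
The plan is to define $B$ as the collection of points traversed by $g_0$--optimal curves at regular times, verify the measure condition and the inclusion $B \subset \A_{g_0}$ using results already at hand, and then reduce the identity at spatial differentiability points to a Fenchel equality obtained via a two--sided comparison.

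First I would set
\[
B := \{(\zeta(t),t) \in \T^N \times (0,T) : \zeta \text{ is $g_0$--optimal and } t \in E_\zeta\},
\]
where $E_\zeta$ denotes the set of differentiability points of $\zeta$ that are also Lebesgue points of $s \mapsto L_0(\zeta(s),s,\dot\zeta(s))$. Corollary \ref{pregigionzi} gives $B \subset \A_{g_0}$ and $W_{g_0}(\zeta(t),t) = \dot\zeta(t)$ at every $(\zeta(t),t) \in B$. For any $g_0$--optimal $\zeta$, Proposition \ref{master} ensures that $E_\zeta$ has full Lebesgue measure in $[0,T]$, so $\{(\zeta(t),t) : t \in E_\zeta\}$ is a subset of $B$ of $\mu_\zeta$--full measure, meeting the measure requirement.

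Now fix $(x_0,t_0) \in B$ at which $v$ is spatially differentiable, let $\zeta$ be an associated $g_0$--optimal curve, and set $\bar q := \dot\zeta(t_0) = W_{g_0}(x_0,t_0)$ and $p_0 := -Dv(x_0,t_0)$. The Fenchel--Young inequality gives
\[
-L_0(x_0,t_0,\bar q) + p_0 \cdot \bar q \leq H_0(x_0,t_0,p_0),
\]
with equality precisely when $\bar q = H^0_p(x_0,t_0,p_0)$ under \textbf{(H4)}. The task is to prove the reverse inequality, i.e.\ to show that $p_0$ lies in the $q$--subdifferential of $L_0$ at $\bar q$, namely
\[
L_0(x_0,t_0,q') - L_0(x_0,t_0,\bar q) \geq p_0 \cdot (q' - \bar q) \qquad \text{for every } q' \in \R^N.
\]

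For each $q' \in \R^N$ and $h > 0$ small, I would combine the LO inequality of Lemma \ref{prealana} (direction $q'$) with the calibration equality along $\zeta$ (direction $\bar q$), replacing $\zeta(t_0+h)$ by $x_0+h\bar q$ via spatial Lipschitzness of $v$ and differentiability of $\zeta$ at $t_0$; this delivers the lower bound
\[
\frac{v(x_0+hq',\,t_0+h) - v(x_0+h\bar q,\,t_0+h)}{h} \geq L_0(x_0,t_0,\bar q) - L_0(x_0,t_0,q') + o(1).
\]
The principal obstacle is the matching upper bound. Decomposing
\[
v(x_0+hq',t_0+h) - v(x_0+h\bar q,t_0+h) = [v(x_0+hq',t_0) - v(x_0+h\bar q,t_0)] + [\psi(x_0+hq') - \psi(x_0+h\bar q)],
\]
with $\psi(y) := v(y,t_0+h) - v(y,t_0)$, spatial differentiability at $(x_0,t_0)$ handles the first bracket with the correct leading term $h\,Dv(x_0,t_0) \cdot (q'-\bar q) + o(h)$, whereas the second is only $O(h)$ by spatial Lipschitzness of $v$---not the $o(h)$ cancellation that is needed. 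I expect to resolve this by a symmetric argument: build an auxiliary competitor $\eta$ on $[t_0+h,\,t_0+2h]$ that catches up with $\zeta$, apply the LO upper bound to $v(x_0+hq',t_0+h)$, and use the calibration equality at time $t_0+2h$. The resulting two--sided estimate should yield the Fenchel equality, and hence the identity $\bar q = H^0_p(x_0,t_0,-Dv(x_0,t_0))$.
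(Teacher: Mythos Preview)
Your construction of $B$ and the verification of $B\subset\A_{g_0}$ together with the $\mu_\zeta$--full measure property via Corollary~\ref{pregigionzi} and Proposition~\ref{master} are correct and coincide with the paper's choice. The gap is in the identity $W_{g_0}=H^0_p(\cdot,\cdot,-Dv)$ at spatial differentiability points.

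Your upper bound does not close. Carrying out the competitor you sketch on $[t_0+h,t_0+2h]$: the curve $\eta$ must go from $x_0+hq'$ to $\zeta(t_0+2h)\approx x_0+2h\bar q$, so $\dot\eta\approx 2\bar q-q'$ and the LO inequality gives
\[
\frac{v(x_0+hq',t_0+h)-v(x_0+h\bar q,t_0+h)}{h}\le L_0(x_0,t_0,2\bar q-q')-L_0(x_0,t_0,\bar q)+o(1).
\]
Combined with your lower bound this yields only $2L_0(x_0,t_0,\bar q)\le L_0(x_0,t_0,q')+L_0(x_0,t_0,2\bar q-q')$, i.e.\ midpoint convexity---automatically true and carrying no information about $p_0=-Dv(x_0,t_0)$. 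The spatial derivative never enters. Your alternative decomposition via $\psi(y)=v(y,t_0+h)-v(y,t_0)$ fails for the reason you yourself flag, and cannot be rescued by ``spatial Lipschitzness of $v$'': in this paper $v$ is only continuous (see the remark following \eqref{defcA} and Appendix~\ref{LxO}), so that appeal is unjustified.

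The paper's device, which is the missing idea, is Lemma~\ref{gigiosi}. One takes a $C^1$ subtangent $\varphi$ to $v(\cdot,t_0)$ at $x_0$---available precisely because $v$ is spatially differentiable there---and studies the maximizers $x_h$ of
\[
F^\varphi_h(x)=\varphi(x)-h\,L_0\Bigl(x_0,t_0,\tfrac{\zeta(t_0+h)-x}{h}\Bigr).
\]
The first-order condition at $x_h$ forces $\tfrac{\zeta(t_0+h)-x_h}{h}\to H^0_p(x_0,t_0,-Dv(x_0,t_0))$. The subtangent inequality $\varphi\le v(\cdot,t_0)$ then bounds $F^\varphi_h(x_h)$ above via a linear competitor from $x_h$ to $\zeta(t_0+h)$, while calibration along $\zeta$ bounds $F^\varphi_h(x_0)$ below; the squeeze yields $F^\varphi_h(x_h)-F^\varphi_h(x_0)=o(h)$, hence $(x_h-x_0)/h\to 0$ and $\dot\zeta(t_0)=H^0_p(x_0,t_0,-Dv(x_0,t_0))$. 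The smooth test function is exactly what makes the spatial gradient usable without any regularity of $v$ beyond continuity.
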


\smallskip

We first prove a lemma on the extrema of functions of the form
\begin{equation}\label{funzione}
F^\varphi_h(x)
= \varphi(x) - h \, L_0\left(\zeta(t_0),t_0, \frac{\zeta(t_0+h)-x}{h}\right),
\end{equation}
where $h \in \R$ has sufficiently small modulus, $\zeta$ is a $g_0$--optimal curve, $t_0$ a fixed time in $(0,T)$, and $\varphi$ is a $C^1$ function on $\T^N$. This lemma will be used in the proof of Proposition \ref{teogigionzi}, and in a corollary, taking as $\varphi$ a $C^1$ subtangent/supertangent of $v(\cdot,t_0)$ at a space--differentiability point $(x_0,t_0)$. Recall that a function differentiable at a point admits both a $C^1$ subtangent and a $C^1$ supertangent at that point (see \cite[Lemma 1.8]{BICD97}).

\smallskip

\begin{Lemma}\label{gigiosi}
Let $\zeta$, $t_0$, $\varphi$, $h$ be as above, and let $F^\varphi_h$ be defined by \eqref{funzione}.
If $x_h$ is an extremal point of $F_h^\varphi$ in $\T^N$ (i.e., a local/global maximizer/minimizer), for any $h$, then
\begin{itemize}
  \item[{\bf (i)}] $\displaystyle \lim_{h\to 0} \frac{\zeta(t_0+h)-x_h}{h}
  = H^0_p\big(\zeta(t_0),t_0,-D\varphi(\zeta(t_0))\big)$;
  \item[{\bf (ii)}] $-D \varphi(\zeta(t_0)) \in \dfrac{\partial}{\partial q}
  L_0\big(\zeta(t_0),t_0,H^0_p(\zeta(t_0),t_0,-D\varphi(\zeta(t_0)))\big)$,
  where $\dfrac{\partial}{\partial q} L_0$ denotes the subdifferential of $L_0$ with respect to the velocity variable.
\end{itemize}
\end{Lemma}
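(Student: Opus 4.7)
The plan is to apply Fermat's rule at the extremum $x_h$ to derive a subdifferential inclusion involving $q_h(x_h):=(\zeta(t_0+h)-x_h)/h$, invert it via Fenchel--Young duality and hypothesis {\bf (H4)}, and then pass to the limit $h\to 0$.

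Write $F^\varphi_h = \varphi - h\,u$ with $u(x) := L_0(\zeta(t_0), t_0, q_h(x))$, where $q_h(x) := (\zeta(t_0+h)-x)/h$. By {\bf (H2)} and the fact that $q_h$ is affine in $x$, $u$ is convex on a neighborhood of $\zeta(t_0)$ (working in local coordinates), with
\[
\partial u(x) \;=\; -\frac{1}{h}\,\partial_q L_0\bigl(\zeta(t_0), t_0, q_h(x)\bigr)
\]
by the affine chain rule for convex subdifferentials. Depending on the sign of $h$, the function $-hu$ is either convex or concave, hence Clarke--regular. Since $\varphi$ is $C^1$, the Clarke sum rule applies with equality, and Fermat's rule at the interior extremum $x_h$ yields, uniformly in the sign of $h$ and in whether $x_h$ is a maximum or a minimum,
\[
-D\varphi(x_h) \;\in\; \partial_q L_0\bigl(\zeta(t_0), t_0, q_h(x_h)\bigr).
\]
Via Fenchel--Young duality this is equivalent to $q_h(x_h) \in \partial_p H_0(\zeta(t_0), t_0, -D\varphi(x_h))$, and {\bf (H4)} collapses the right--hand side to the singleton $\{H^0_p(\zeta(t_0), t_0, -D\varphi(x_h))\}$, so
\[
q_h(x_h) \;=\; H^0_p\bigl(\zeta(t_0), t_0, -D\varphi(x_h)\bigr).
\]

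Since $D\varphi$ is bounded on the compact torus and $H^0_p$ is jointly continuous (a standard consequence of {\bf (H1)}, {\bf (H2)}, {\bf (H4)} via the local uniform convergence of derivatives of convex functions), the family $\{q_h(x_h)\}$ is bounded, whence
\[
|x_h - \zeta(t_0+h)| \;=\; |h|\,|q_h(x_h)| \;\longrightarrow\; 0,
\]
and the continuity of $\zeta$ gives $x_h \to \zeta(t_0)$. Letting $h\to 0$ in the formula for $q_h(x_h)$ then yields item {\bf (i)}. For {\bf (ii)}, pass to the limit in the subdifferential inclusion using the closed--graph property of $(x,t,q) \mapsto \partial_q L_0(x,t,q)$, which follows from the continuity of $L_0$ together with its convexity in $q$: combined with $x_h \to \zeta(t_0)$, $D\varphi(x_h) \to D\varphi(\zeta(t_0))$, and $q_h(x_h) \to H^0_p(\zeta(t_0), t_0, -D\varphi(\zeta(t_0)))$, the limit inclusion is exactly {\bf (ii)}.

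The main obstacle is the Fermat step: {\bf (H4)} ensures strict convexity of $L_0$ in $q$ but not its smoothness, so $u$ may be non--differentiable and classical calculus is unavailable. This is addressed by invoking Clarke's generalized gradient together with the regularity of smooth--plus--(convex/concave) functions, yielding equality in the sum rule and reducing Fermat's rule at any interior extremum to the displayed subdifferential inclusion. Once that inclusion is secured, items {\bf (i)} and {\bf (ii)} follow from continuity of $H^0_p$ and the closed--graph property of convex subdifferentials, respectively.
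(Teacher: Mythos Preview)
Your argument is correct and follows the same skeleton as the paper's proof: Fermat's rule at $x_h$ gives $-D\varphi(x_h)\in\partial_q L_0(\zeta(t_0),t_0,q_h(x_h))$, from which boundedness of $q_h(x_h)$ and $x_h\to\zeta(t_0)$ follow, and one then passes to the limit. The difference is that you invert the inclusion via Fenchel--Young and {\bf (H4)} \emph{at each} $h$, obtaining the explicit formula $q_h(x_h)=H^0_p(\zeta(t_0),t_0,-D\varphi(x_h))$; continuity of $H^0_p$ then yields {\bf (i)} directly. The paper instead bounds $q_h(x_h)$ via the superlinearity of $L_0$, extracts a subsequential limit $q_0$, passes to the limit in the inclusion using the closed graph of $\partial_q L_0$, and finally invokes strict convexity of $L_0$ (hence strict monotonicity of $\partial_q L_0$) to conclude that all limit points coincide. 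Your route is marginally cleaner since it avoids the subsequence/uniqueness step.

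One correction: concave functions are \emph{not} Clarke--regular in general (consider $-|x|$ at the origin), so your stated justification of the sum rule is inaccurate. The conclusion stands nonetheless, because the Clarke sum rule $\partial(\varphi+g)(x)=D\varphi(x)+\partial g(x)$ holds with equality whenever $\varphi$ is $C^1$, with no regularity hypothesis on the locally Lipschitz summand $g$; combined with $\partial(-g)=-\partial g$ and the fact that $0\in\partial F(x_h)$ at any local extremum (minimum or maximum), this delivers the displayed inclusion directly.
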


\begin{proof}
Since $F^\varphi_h$ is continuous, it admits extremal points over $\T^N$ and at any such point $x_h$ the first--order optimality condition yields
\begin{equation}\label{gigiosi1}
D \varphi(x_h) + \frac{\partial}{\partial q} L_0\left(\zeta(t_0),t_0,\frac{\zeta(t_0+h) - x_h}{h}\right) \ni 0.
\end{equation}
Because $D\varphi(x)$ is bounded and $L_0$ is superlinear in $q$, it follows that
\[
\frac{\zeta(t_0+h) - x_h}{h}
\]
is bounded, hence $x_h \to \zeta(t_0)$ as $h \to 0$. Passing to the limit in \eqref{gigiosi1} and using closedness of the graph of the subdifferential, we obtain
\begin{equation}\label{tenta1}
D \varphi(\zeta(t_0)) + \frac{\partial}{\partial q} L_0(\zeta(t_0),t_0,q_0)\ni 0,
\end{equation}
for any limit point $q_0$ of $\dfrac{\zeta(t_0+h) - x_h}{h}$.
Since $L_0(\zeta(t_0),t_0,\cdot)$ is strictly convex, its subdifferential is strictly monotone, hence all such limit points must coincide, and
\[
\lim_{h \to 0} \frac {\zeta(t_0+h)- x_h}{h}
\]
exists. By duality it equals
\[
H^0_p(\zeta(t_0),t_0,-D\varphi(\zeta(t_0))),
\]
which proves {\bf (i)}. Item {\bf (ii)} follows directly from \eqref{tenta1}.
\end{proof}

\smallskip

\begin{proof}[{\it Proof of Proposition \ref{teogigionzi}}]
Fix a $g_0$--optimal curve $\zeta$. Assume that $v$ is differentiable in space at a point $(x_0,t_0) \in \T^N \times (0,T)$ with $x_0=\zeta(t_0)$. Assume further that $\zeta$ is differentiable at $t_0$ and that $t_0$ is a Lebesgue point for
\[
t \mapsto L_0(\zeta(t),t,\dot\zeta(t)).
\]
Let $\varphi: \T^N \to \R$ be a $C^1$ subtangent to $v(\cdot,t_0)$ at $\zeta(t_0)$ and, for each $h >0$, let $x_h$ be a maximizer of the function $F^\varphi_h$ defined in \eqref{funzione}. The gist of the proof is to show that
\[
\lim_{h\to 0} \frac{x_h-x_0}{h}=0.
\]
From Lemma \ref{gigiosi} {\bf (i)} $\dfrac{\zeta(t_0+h)-x_h}{h}$ stays bounded for $h \to 0$ and $x_h \to x_0$. Moreover, $\zeta$ is differentiable at $t_0$, and $t_0$ is a Lebesgue point for $L_0(\zeta(t),t,\dot\zeta(t))$. Using these facts, we can find a modulus $\om$ with
\begin{align}
\left|L_0\left(x_0,t_0, \frac{\zeta(t_0+h)-x_h}{h} \right)
- L_0\left(x_h + (t-t_0)\frac{\zeta(t_0+h)-x_h}{h}, t, \frac{\zeta(t_0+h)-x_h}{h} \right)\right|
< \om(h) \label{teogigionzi1}
\end{align}
for $t \in [t_0,t_0+h]$, and moreover
\begin{align}
\left| L_0\left(x_0,t_0, \frac{\zeta(t_0+h)-x_0}{h} \right) -  L_0(x_0,t_0,\dot\zeta(t_0)) \right|
&< \om(h), \label{teogigionzi2}\\
\left| \frac{1}{h} \int_{t_0}^{t_0+h} L_0(\zeta,t,\dot\zeta) \, dt - L_0(x_0,t_0,\dot\zeta(t_0)) \right|
&< \om(h). \label{teogigionzi3}
\end{align}
Define
\[
\eta_h(t)=  x_h + (t-t_0) \frac{\zeta(t_0+h)-x_h}{h},
\]
which is a curve joining $x_h$ to $\zeta(t_0+h)$ for $t$ varying in $[t_0,t_0+h]$.
Using \eqref{teogigionzi1}, and the fact that $\varphi$ is subtangent to $v(\cdot,t_0)$, we infer
\begin{align*}
F^\varphi_h(x_h)
&\leq v(x_h,t_0) -  \int_{t_0}^{t_0+h} L_0(\eta_h, t, \dot \eta_h) \, dt + h \, \om(h)\\
&\leq  v(\zeta(t_0+h),t_0+h) + h \, \om(h).
\end{align*}
On the other hand, by \eqref{teogigionzi2} and \eqref{teogigionzi3},
\begin{align*}
F^\varphi_h(x_0)
&\geq v(x_0,t_0) - h L_0(x_0,t_0,\dot\zeta(t_0)) -h \, \om(h) \\
&\geq v(x_0,t_0)- h \left[ \frac{1}{h} \int_{t_0}^{t_0+h} L_0(\zeta,t,\dot\zeta) \, dt  + \om(h) \right]  -h \, \om(h)\\
&= v(\zeta(t_0+h),t_0+h) - 2 h \,\om(h).
\end{align*}
Combining the previous inequalities, we obtain
\begin{align*}
0
&\leq F^\varphi_h(x_h) -F^\varphi_h(x_0)\\
&\leq v(\zeta(t_0+h),t_0+h) + h \, \om(h) - v(\zeta(t_0+h),t_0+h) + 2 h \, \om(h) \\
&\leq 3 h \, \om(h).
\end{align*}
Dividing by $h$ and expanding $F^\varphi_h$, we find
\begin{align}
0
&\leq \frac{1}{h} \Bigl[\varphi(x_h) - \varphi(x_0) + h \Bigl( L_0\left(x_0,t_0, \frac{\zeta(t_0+h) - x_0}{h} \right) - L_0\left(x_0,t_0, \frac{\zeta(t_0+h) - x_h}{h} \right) \Bigr) \Bigr] \label{teogigiosi3}\\
&= Dv(x_0,t_0) \cdot \frac{x_h-x_0}{h} + \frac{\mathrm{o}(|x_h-x_0|)}{h} \nonumber\\
&\quad\ + \Bigl[ L_0\left(x_0,t_0, \frac{\zeta(t_0+h) - x_0}{h} \right) - L_0\left(x_0,t_0, \frac{\zeta(t_0+h) - x_h}{h} \right) \Bigr] \nonumber\\
&\leq 3 \,  \om(h). \nonumber
\end{align}
We have by Lemma \ref{gigiosi} {\bf (i)}
\[
\lim_{h} \frac{\zeta(t_0+h)-x_h}{h} = H^0_p(x_0,t_0,-Dv(x_0,t_0))
\]
and consequently
\begin{align*}
\lim_{h} \frac{x_h-x_0}{h}
&= - \lim_{h} \frac{\zeta(t_0+h)-x_h}{h} + \lim_{h} \frac{\zeta(t_0+h)-\zeta(t_0)}{h} \\
&= \dot \zeta(t_0) - H^0_p(x_0,t_0,-Dv(x_0,t_0)).
\end{align*}
Passing to the limit in \eqref{teogigiosi3}, we get
\begin{align*}
&L_0(x_0,t_0,\dot\zeta(t_0)) - L_0(x_0,t_0,H^0_p(x_0,t_0,-Dv(x_0,t_0))) \\
&\quad= (-Dv(x_0,t_0)) \cdot (\dot\zeta(t_0)- H^0_p(x_0,t_0,-Dv(x_0,t_0))),
\end{align*}
which implies
\begin{equation}\label{teogigionzi10}
\dot\zeta(t_0)= H^0_p(x_0,t_0,-Dv(x_0,t_0))
\end{equation}
because $-Dv(x_0,t_0) \in \dfrac{\partial}{\partial q} L_0\big(x_0,t_0,H^0_p(x_0,t_0,-Dv(x_0,t_0))\big)$ by Lemma \ref{gigiosi} {\bf (ii)} and $L_0$ is strictly convex in $q$. Since $t_0$ is a differentiability point of $\zeta$ and a Lebesgue point of $L_0$, we get from \eqref{teogigionzi10} and Corollary \ref{pregigionzi}
\[
(x_0,t_0) \in \A_{g_0}
\txt{and}\qquad
W_{g_0}(x_0,t_0)= H^0_p(x_0,t_0,-Dv(x_0,t_0)).
\]
The assertion is then a consequence of Proposition \ref{master}.
\end{proof}

\smallskip

The argument of Proposition \ref{teogigionzi} plus Lemma \ref{gigiosi} actually proves the following more general statement:

\smallskip

\begin{Corollary}
There exists $B \subset \T^N \times [0,T]$ containing a subset of $\mu_\zeta$--full measure, for any $g_0$--optimal curve $\zeta$,
such that $\A_{g_0} \supset B$, and
\[
W_{g_0}(x,t)= H^0_p(x,t,-D\varphi(x))
\]
for any $(x,t) \in B$ where $v(\cdot,t)$ admits a $C^1$ super- or subtangent $\varphi$.
\end{Corollary}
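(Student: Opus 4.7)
The plan is to revisit the proof of Proposition~\ref{teogigionzi} and notice that the space-differentiability of $v$ at $(x_0,t_0)$ enters only indirectly there: it supplies a $C^1$ subtangent $\varphi$ at $x_0$ and, in the Taylor step, identifies $D\varphi(x_0)$ with $Dv(x_0,t_0)$. The whole chain of estimates is already written in terms of $\varphi$ and its gradient, so removing the differentiability assumption reduces to (i) supplying $\varphi$ as an abstract $C^1$ sub- or supertangent, and (ii) writing $D\varphi(x_0)$ in place of $Dv(x_0,t_0)$ from the outset. I take $B$ to be the set already produced by Proposition~\ref{teogigionzi}: the points $(x,t)=(\zeta(t),t)$ where $\zeta$ is $g_0$-optimal, $t$ is a differentiability point of $\zeta$ and a Lebesgue point of $L_0(\zeta,\cdot,\dot\zeta)$. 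By Proposition~\ref{master} this set contains a $\mu_\zeta$-full subset for every $g_0$-optimal $\zeta$, and by Corollary~\ref{pregigionzi} it is contained in $\A_{g_0}$.

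Fix $(x_0,t_0)\in B$, a witnessing $g_0$-optimal curve $\zeta$, and a $C^1$ tangent $\varphi$. In the \emph{subtangent} case I follow the proof of Proposition~\ref{teogigionzi} verbatim, with $h>0$ and $x_h$ a global maximizer of $F_h^\varphi$ on $\T^N$: the inequality $\varphi\le v(\cdot,t_0)$ at $x_h$, the equality $\varphi(x_0)=v(x_0,t_0)$, together with the $g_0$-optimality of $\zeta$ and Lemma~\ref{sub}, still yield $0\le F_h^\varphi(x_h)-F_h^\varphi(x_0)\le 3h\,\omega(h)$. Dividing by $h$, Taylor-expanding $\varphi(x_h)-\varphi(x_0)=D\varphi(x_0)\cdot(x_h-x_0)+\mathrm{o}(|x_h-x_0|)$, and passing to the limit via Lemma~\ref{gigiosi}\,(i) (which gives $(\zeta(t_0+h)-x_h)/h\to H^0_p(x_0,t_0,-D\varphi(x_0))$ and hence the boundedness of $(x_h-x_0)/h$) produces $\dot\zeta(t_0)=H^0_p(x_0,t_0,-D\varphi(x_0))$ through the Fenchel--Young equality plus strict convexity argument used in Proposition~\ref{teogigionzi}.

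For the \emph{supertangent} case I will mirror the whole construction with $h<0$ and $x_h$ a global \emph{minimizer} of $F_h^\varphi$. Lemma~\ref{gigiosi} is stated for extrema of either type and any sign of $h$, hence applies without change. Running Lemma~\ref{sub} backwards on the interval $[t_0+h,t_0]$ together with $\varphi\ge v(\cdot,t_0)$ at $x_h$, and using the $g_0$-optimality of $\zeta$ at $x_0$, should yield the symmetric bound
\[
0\ \le\ F_h^\varphi(x_0)-F_h^\varphi(x_h)\ \le\ |h|\,\omega(|h|).
\]
Dividing by $|h|$ and passing to the limit reproduces the same Fenchel--Young equality between $\dot\zeta(t_0)$ and $-D\varphi(x_0)$, and strict convexity of $L_0$ in $q$ (from \textbf{(H4)}) again forces $\dot\zeta(t_0)=H^0_p(x_0,t_0,-D\varphi(x_0))$. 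In both cases Corollary~\ref{pregigionzi} closes the argument: $(x_0,t_0)\in\A_{g_0}$ and $W_{g_0}(x_0,t_0)=\dot\zeta(t_0)=H^0_p(x_0,t_0,-D\varphi(x_0))$.

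The main obstacle I foresee is the sign-and-interval bookkeeping in the supertangent case: the interpolation curves joining $x_h$ or $x_0$ to $\zeta(t_0+h)$ now live on $[t_0+h,t_0]$, the Lebesgue-point approximation of $L_0(\zeta,\cdot,\dot\zeta)$ has to be restated there, and the $|h|\,\omega(|h|)$ error terms must be shown to land on the correct side of each inequality. The algebra is a mirror image of the $h>0$ case, but it is not purely notational and deserves to be verified line by line.
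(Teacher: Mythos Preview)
Your proposal is correct and follows exactly the approach the paper indicates: the paper gives no separate proof for this corollary, merely stating that ``the argument of Proposition~\ref{teogigionzi} plus Lemma~\ref{gigiosi} actually proves'' it. You supply the details the paper omits, in particular the supertangent case via the mirror argument with $h<0$ and $x_h$ a minimizer of $F^\varphi_h$; this is the natural adaptation, and Lemma~\ref{gigiosi} is indeed stated for arbitrary extrema and both signs of $h$ precisely to accommodate it.
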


\bigskip

\section{A fixed point result} \label{fissa}

Here we consider a Hamiltonian
\[
H: \T^N \times \bP(\T^N) \times \R^N \to \R
\]
satisfying the structural assumptions {\bf (H1)}--{\bf (H3)}, up to straightforward adaptations.

We are specifically interested in the case where a measure $\xi \in \bP(\G)$ is fixed and $H$ depends on time only through its evaluation curve $\xi(t)$. In other words, the Hamiltonians we focus on in this section have the form
\[
H(x,\xi(t),p) \txt{for $\xi \in \bP(\G)$.}
\]
By Lemma \ref{preservone}, the above Hamiltonians inherit the structural conditions {\bf (H1)}--{\bf (H3)}. Therefore all results established in the first part of the paper for the {\em abstract} Hamiltonian $H_0$ apply to them for any given $\xi$.

We also consider a continuous function
\[
g: \T^N \times \bP(\T^N) \to \R
\]
and the HJ equation
\begin{equation}\label{HJ} \tag{HJ$^*$}
  - u_t + H(x,\xi(t),- Du) = 0 \quad\hbox{in $ \T^N \times (0,T)$.}
\end{equation}
coupled with the final datum $g(\cdot,\xi(T))$ at $t=T$.

Since both $\T^N$ and $\bP(\T^N)$ are compact, the continuity of $g$ implies the existence of a constant $M_g$ and a modulus $\om_g$ with
\begin{equation}\label{FD0}
 \max_{\T^N} |g(x,\mu)| \leq M_g,
 \qquad
 | g(x_1,\mu) -  g(x_2,\mu)| \leq \om_{ g}(|x_2-x_1|)
\end{equation}
for any $x_1, x_2 \in \T^N$, $\mu \in \bP(\T^N)$.

\medskip

Given an {\em initial measure} $\mu_0$, we define the multivalued map $\cT: \bP(\G) \to \bP(\G)$ by
\begin{equation}\label{defTau}
 \cT(\xi) = \{ \hbox{$g( \cdot,\xi(T))$--optimal measures $ \xi^*$ with $\xi^*(0)= \mu_0$}\}.
\end{equation}

\smallskip

The main result of the section is:

\begin{Theorem}\label{mfg}
Under the assumptions {\bf (H1)}--{\bf (H3)} the multivalued map $\cT$ possesses a fixed point, that is, there exists $\xi^* \in \bP(\G)$ satisfying
\[
\xi^* \in \cT(\xi^*).
\]
\end{Theorem}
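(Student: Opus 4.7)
The plan is to apply the locally convex Kakutani--Fan--Glicksberg fixed point theorem to $\cT$, restricted to a suitable compact convex subset of $\bP(\G)$, the latter being viewed as a subset of the locally convex space of Borel signed measures on $\G$ equipped with the narrow topology.

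First I would build a compact convex trap. Using \eqref{FD0}, the family $\{g(\cdot,\mu) \mid \mu \in \bP(\T^N)\}$ is equibounded by $M_g$ and equicontinuous with modulus $\om_g$. Proposition \ref{compattone} therefore produces a compact set $K \subset \G$ containing all $g(\cdot,\mu)$--optimal curves, for every $\mu \in \bP(\T^N)$. Setting
\[
X := \bigl\{ \xi \in \bP(\G) \,\big|\, \xi(0) = \mu_0,\ \spt \xi \subset K \bigr\},
\]
I check that $X$ is convex, non-empty (Proposition \ref{esisto}), and narrowly compact (tightness plus closedness under narrow limits, via Prohorov and continuity of $\xi \mapsto \xi(0)$). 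Any element of $\cT(\xi)$ is supported on $g(\cdot,\xi(T))$--optimal curves, hence on $K$, so $\cT : X \rightrightarrows X$.

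Next I verify the remaining hypotheses of Kakutani--Fan--Glicksberg. Non-emptiness of $\cT(\xi)$ follows from Proposition \ref{esisto} applied to the time-dependent Hamiltonian $H(x,\xi(t),p)$, whose inheritance of \textbf{(H1)}--\textbf{(H3)} is guaranteed by Lemma \ref{preservone}. Convexity of $\cT(\xi)$ is immediate. Closedness of each $\cT(\xi)$ follows from Lemma \ref{dead}. The crucial step is upper semicontinuity of the graph: given $\xi_n \to \xi$ in $X$ and $\eta_n \in \cT(\xi_n)$ with $\eta_n \to \eta$, I must show $\eta \in \cT(\xi)$. By Lemma \ref{lemfranciosa1}, $\xi_n(t) \to \xi(t)$ uniformly in $t$ with respect to $d_W$, and continuity of $g$ on the compact $\T^N \times \bP(\T^N)$ yields $g(\cdot,\xi_n(T)) \to g(\cdot,\xi(T))$ uniformly; by stability of LO solutions (Proposition \ref{stabia}), the corresponding value functions $v_n$ converge uniformly to $v$. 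Combining with the variational identity of Proposition \ref{optimum},
\[
\int A_{L_{\xi_n}}(\zeta)\, d\eta_n = \langle \mu_0, v_n(\cdot,0) \rangle - \langle \eta_n(T), g(\cdot,\xi_n(T)) \rangle,
\]
whose right-hand side converges by Lemma \ref{lemdemure3} to $\langle \mu_0, v(\cdot,0) \rangle - \langle \eta(T), g(\cdot,\xi(T)) \rangle$, with the universal lower bound $\int A_{L_\xi}\, d\eta \geq \langle \mu_0, v(\cdot,0)\rangle - \langle \eta(T), g(\cdot,\xi(T))\rangle$ coming from Propositions \ref{pizza} and \ref{narduz}, an equality in the limit would force $\eta$ to be $g(\cdot,\xi(T))$--optimal by Proposition \ref{optimum}.

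The main obstacle is the joint lower semicontinuity needed on the left-hand side: both the measure $\eta_n$ and the Lagrangian $L_{\xi_n}(x,t,q) = L(x,\xi_n(t),q)$ vary simultaneously, while Proposition \ref{presciare} only covers a fixed Lagrangian. Here I would invoke the perturbed--Lagrangian construction from the fourth appendix, replacing $L_\xi$ by continuous lower approximants $L^\eps_\xi \nearrow L_\xi$ depending jointly continuously on $(x,\xi,q)$ (for instance via an inf-convolution in $q$ controlled by the uniform superlinearity bound $\theta$ in \eqref{pettis}). Then $\int A_{L_{\xi_n}}\, d\eta_n \geq \int A_{L^\eps_{\xi_n}}\, d\eta_n$, and passing first to $n \to \infty$ (using Fubini together with the joint continuity of the perturbed integrand and the narrow convergence $\eta_n \to \eta$) and then $\eps \to 0$ (monotone convergence) gives $\liminf_n \int A_{L_{\xi_n}}\, d\eta_n \geq \int A_{L_\xi}\, d\eta$. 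This closes the proof of upper semicontinuity, and Kakutani--Fan--Glicksberg then produces the desired $\xi^* \in \cT(\xi^*)$.
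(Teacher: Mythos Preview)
Your overall strategy coincides with the paper's: restrict $\cT$ to a compact convex subset of $\bP(\G)$, embedded in the locally convex space $\mathbb M(\G)$ with the narrow topology, and apply Kakutani--Fan--Glicksberg, verifying the closed-graph property via the variational identity of Proposition \ref{optimum} together with a perturbed-Lagrangian lower bound.

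Your choice of trap $X=\{\xi\in\bP(\G)\mid \xi(0)=\mu_0,\ \spt\xi\subset K\}$ is actually cleaner than the paper's. The paper takes $\bP_0(\G)$ to be the \emph{closed convex hull} of the optimal measures and then spends nontrivial effort (Lemma \ref{lemuecompa}, via the strict-topology machinery of Appendix \ref{topino} and a Kantorovich--Rubinstein argument) proving that this closed convex hull is compact. Your $X$ is convex by inspection and compact directly by Prohorov, bypassing that lemma entirely. This is a genuine simplification.

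One point needs sharpening. In the closed-graph step you write that the limit $n\to\infty$ in $\int A_{L^\eps_{\xi_n}}\,d\eta_n$ follows from ``Fubini together with the joint continuity of the perturbed integrand''. Joint continuity of $L^\eps$ in $(x,\mu,q)$ is not enough: the action $\zeta\mapsto\int_0^T L^\eps(\zeta,\xi(t),\dot\zeta)\,dt$ involves $\dot\zeta$ and is not continuous on $\G$ with the uniform topology, only lower semicontinuous, and that lsc requires convexity of $L^\eps$ in $q$. The paper's construction in the appendix you cite is precisely designed for this: it truncates $L$ to $L_\be=L\wedge(\be|q|+\be_0)$ and then passes to the biconjugate $L_\be^{**}$, which is convex in $q$ and continuous in all variables (Proposition \ref{graziolla}); one then invokes Proposition \ref{paglia} for the $\liminf$, and Lemma \ref{lemdemure1} to swap $\xi_n$ for $\xi$ inside the Lagrangian. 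An inf-convolution in $q$ as you suggest would typically \emph{not} preserve convexity, so would not deliver the lsc you need. With this correction your argument goes through and matches the paper's.
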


\smallskip

For the proof we will rely on the following version of the Kakutani–Glicksberg–Fan fixed point Theorem (see e.g. \cite{AB06}):

\begin{Theorem}\label{KGF}
Let $S$ be a nonempty, compact and convex subset of a locally convex Hausdorff space. Let $\Phi$ be a multivalued map on $S$ with nonempty convex values and closed graph. Then the set of fixed points of $\Phi$ is nonempty and compact.
\end{Theorem}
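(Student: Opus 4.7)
The plan is to apply Theorem \ref{KGF} to the multivalued map $\cT$. Three ingredients are required: an invariant compact convex set $S$, nonempty convex values of $\cT$ on $S$, and closed graph of $\cT$.

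For the invariant set, I would use the uniform bounds \eqref{FD0} on $g$: for every $\xi \in \bP(\G)$ the final datum $g(\cdot,\xi(T))$ satisfies condition \eqref{cicciuzzo} with $M = M_g$, $\om = \om_g$, so Proposition \ref{compattonebis} delivers a common compact set $K \subset \bP(\G)$ containing every $g(\cdot,\nu)$--optimal measure as $\nu$ ranges over $\bP(\T^N)$. I would then take
\[
S = \overline{\co}(K) \cap \{ \xi \in \bP(\G) \mid \xi(0) = \mu_0 \}.
\]
This set is compact (closed inside a compact set), convex, nonempty by Proposition \ref{esisto}, and satisfies $\cT(S) \subset K \cap \{\xi(0)=\mu_0\} \subset S$ by the very definition of $\cT$. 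Nonemptiness of each $\cT(\xi)$ is then exactly Proposition \ref{esisto} applied to the continuous final datum $g_0 = g(\cdot,\xi(T))$ with prescribed initial distribution $\mu_0$; convex--valuedness is immediate, since a convex combination of two measures with initial distribution $\mu_0$ supported on $g(\cdot,\xi(T))$--optimal curves still shares both features.

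The core of the proof is the closed graph of $\cT$. Let $\xi_n \to \xi$ and $\xi_n^* \to \xi^*$ in $\bP(\G)$ with $\xi_n^* \in \cT(\xi_n)$. Lemma \ref{lemfranciosa1} gives narrow convergence of the evaluation curves $\xi_n(t) \to \xi(t)$ and $\xi_n^*(t) \to \xi^*(t)$ uniformly in $t$, so in particular $\xi^*(0) = \mu_0$. Writing $L^n(x,t,q) = L(x,\xi_n(t),q)$ and $L^\infty(x,t,q) = L(x,\xi(t),q)$, continuity of the Fenchel transform in the measure argument yields locally uniform convergence $L^n \to L^\infty$, while the stability result of Appendix \ref{LxO} (Proposition \ref{stabia}) gives $v_n \to v$ uniformly for the corresponding LO solutions with final data $g(\cdot,\xi_n(T)) \to g(\cdot,\xi(T))$. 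Proposition \ref{optimum}(ii) applied to $\xi_n^*$ reads
\[
\int A_{L^n}(\zeta)\, d\xi_n^* = \langle \mu_0, v_n(\cdot,0) \rangle - \langle \xi_n^*(T), g(\cdot,\xi_n(T)) \rangle,
\]
and the right--hand side converges, by Lemma \ref{lemdemure3}, to $\langle \mu_0, v(\cdot,0) \rangle - \langle \xi^*(T), g(\cdot,\xi(T)) \rangle$. Combining this with the narrow lower semicontinuity bound
\[
\liminf_n \int A_{L^n}(\zeta)\, d\xi_n^* \geq \int A_{L^\infty}(\zeta)\, d\xi^*
\]
and with the reverse chain produced by Propositions \ref{pizza} and \ref{narduz} forces the Proposition \ref{optimum}(ii) identity for $\xi^*$, hence $\xi^* \in \cT(\xi)$. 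Theorem \ref{KGF} then concludes.

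The delicate step is the liminf bound above: the functional $A_{L^n}$ varies with $n$ through its Lagrangian, so the standard narrow lower semicontinuity of $A_{L^\infty}$ (Propositions \ref{paglia}, \ref{presciare}) must be coupled with locally uniform convergence $L^n \to L^\infty$ on the compact set of curves actually supporting the $\xi_n^*$'s, supplied by the uniform action bounds on $K$ via Proposition \ref{sciare}. This is exactly the role of the perturbed Lagrangian announced in the introduction: by passing through a perturbation enjoying cleaner continuity in the measure argument, the liminf inequality becomes a direct application of Lemma \ref{lemdemure3} on compacts, and a final $\eps$--argument returns the statement for the original Lagrangian.
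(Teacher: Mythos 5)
You have not proved Theorem~\ref{KGF}; you have proved (a sketch of) Theorem~\ref{mfg}. The statement in question is the abstract Kakutani--Glicksberg--Fan fixed point theorem itself --- a classical result in functional analysis about multivalued maps on compact convex subsets of locally convex Hausdorff spaces --- and the paper simply cites it from \cite{AB06} without proof. Your proposal opens with ``The plan is to apply Theorem~\ref{KGF} to the multivalued map $\cT$,'' so from the very first sentence it presupposes the theorem it is supposed to be establishing. Everything that follows (the construction of the invariant compact convex set, nonemptiness and convexity of $\cT(\xi)$, closedness of the graph via Proposition~\ref{optimum}(ii), Proposition~\ref{stabia}, Lemma~\ref{lemdemure3}, and the perturbed--Lagrangian liminf argument) is an outline of the paper's proof of Theorem~\ref{mfg}, not of Theorem~\ref{KGF}.

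To prove Theorem~\ref{KGF} one would need an argument entirely different in nature: a reduction to Brouwer's fixed point theorem on finite--dimensional sections, or an approximate--selection lemma (Cellina / Michael type) producing nearby continuous single--valued maps to which Tychonoff's theorem applies, together with a compactness argument passing to the limit via the closed--graph hypothesis. None of this is touched by your proposal. If the intent is just to record Theorem~\ref{KGF} for later use, the correct move is what the paper does --- cite the reference --- and reserve your argument for Theorem~\ref{mfg}, where it belongs. As a sketch of that later result your proposal is essentially on the paper's track, with two small points worth noting: your compact convex set $S = \overline{\co}(K)\cap\{\xi(0)=\mu_0\}$ matches the paper's $\bP_0(\G)$, but its compactness requires Lemma~\ref{lemuecompa} (closed convex hulls of compacts in $\bP(\G)$ are compact), which is not automatic in a general Polish space and which you do not invoke; and your ``locally uniform convergence $L^n\to L^\infty$'' shortcut for the liminf bound cannot be made to work without the $L_\be^{**}$ truncation precisely because the curves supporting $\xi_n^*$ may have unbounded derivatives even while their actions are uniformly bounded --- which you do acknowledge at the end, but the first formulation as stated would not go through.
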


\smallskip

We now introduce some preliminary material. Set
\[
\bP_0(\G)
= \overline{\co}\bigl\{
\xi \in \bP(\G) \;\hbox{with $\xi(0)=\mu_0$, optimal for $g(\cdot,\xi_0(T))$, for some $\xi_0 \in \bP(\G)$}
\bigr\},
\]
where $\overline{\co}$ stands for the closed convex hull.
Note that
\[
\cT(\bP(\G)) \subset  \bP_0(\G).
\]

\smallskip

We will prove the existence of a fixed point for the restriction of $\cT$ to $\bP_0(\G)$. This relies on a crucial compactness property. For its proof we need the following preliminary lemma, where we exploit the topological properties discussed in Section \ref{topino}.

\begin{Lemma}\label{lemuecompa}
The closed convex hull of a compact subset of $\bP(\G)$ is compact.
\end{Lemma}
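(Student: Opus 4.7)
The plan is to rely on Prokhorov's characterization of (relative) compactness in $\bP(\G)$ and show that \emph{tightness}, rather than any linear-topological property, is the right notion preserved under taking closed convex hulls. Since $\T^N$ is a compact Polish space, $\G=C([0,T],\T^N)$ is Polish, and by Remark~\ref{cedric} so is $\bP(\G)$ endowed with the narrow topology. Prokhorov's theorem therefore applies: a subset $A\subset\bP(\G)$ is relatively compact if and only if $A$ is tight, i.e.\ for every $\eps>0$ there exists a compact $C_\eps\subset\G$ with $\mu(\G\setminus C_\eps)<\eps$ for every $\mu\in A$.

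Let $K\subset\bP(\G)$ be compact and fix $\eps>0$; choose a corresponding compact $C_\eps\subset\G$ witnessing the tightness of $K$. The first step is to check that tightness survives the passage to the convex hull: any finite convex combination $\nu=\sum_i\lambda_i\mu_i$ with $\mu_i\in K$ satisfies
\[
\nu(\G\setminus C_\eps)=\sum_i\lambda_i\,\mu_i(\G\setminus C_\eps)<\eps,
\]
so $\co(K)$ is tight with the same family $\{C_\eps\}$.

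The second step is to show that tightness is inherited by the narrow closure. Since $\bP(\G)$ is metrizable (Wasserstein, by Remark~\ref{cedric}), it is enough to reason sequentially: if $\nu_n\in\co(K)$ converges narrowly to $\nu$, the complement $\G\setminus C_\eps$ is open, and the Portmanteau inequality for open sets yields
\[
\nu(\G\setminus C_\eps)\le\liminf_n\nu_n(\G\setminus C_\eps)\le\eps.
\]
Hence $\overline{\co}(K)$ is tight.

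Applying Prokhorov once more, $\overline{\co}(K)$ is relatively compact in $\bP(\G)$; being closed by construction, it is compact. The approach avoids any appeal to quasi-completeness of an ambient locally convex space, which is the delicate point in the non--locally--compact setting of $\G$; the main conceptual step is simply identifying tightness as the stable invariant, after which the Portmanteau lower semicontinuity on open sets closes the argument.
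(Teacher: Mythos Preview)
Your proof is correct and considerably more direct than the paper's. The paper embeds $\bP(\G)$ into the locally convex space $\mathbb M(\G)$ equipped with the strict-topology seminorms, invokes the general fact that the closed convex hull of a compact set in a locally convex space is precompact, and then shows completeness by proving that any Cauchy net for the dual seminorms is also Cauchy for $d_W$ via the Kantorovich--Rubinstein duality. Your argument bypasses all of this machinery by working entirely inside $\bP(\G)$ and using Prokhorov's theorem: tightness is trivially stable under convex combinations and, via the Portmanteau inequality on open sets, under narrow closure. This is the standard and more elementary route; the paper's detour through $\mathbb M(\G)$ and the strict topology is not needed for this lemma per se, though that locally convex structure is later required to apply the Kakutani--Glicksberg--Fan theorem (Theorem~\ref{KGF}), which may explain why the author chose to develop it here. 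One cosmetic point: your Portmanteau step yields $\nu(\G\setminus C_\eps)\le\eps$ rather than strict inequality, but of course this is equivalent to tightness after replacing $\eps$ by $\eps/2$.
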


The property above does not hold in a general Polish space. To prove it in our setting, we must use the specific structure of $\bP(\G)$. A general result states that the closed convex hull of a compact set in a locally convex space is precompact, in the sense that its completion, with respect to the canonical uniformity induced by the seminorms, is compact, see \cite[Chapter II, Section 4.3]{SW99}.

To prove Lemma \ref{lemuecompa}, we accordingly introduce some local convexity in the picture. We exploit the fact that the space $\mathbb M(\G)$ of signed Borel measures with bounded variation on $\G$ is locally convex with the family of seminorms induced by duality from the space $C_b(\G)$ of bounded continuous functions endowed with the strict topology. The corresponding topology on $\mathbb M(\G)$ is the narrow one. See Section \ref{topino} for details, including all relevant definitions and terminology.

\smallskip

\begin{proof}[{\bf Proof of Lemma \ref{lemuecompa}}]
Let $\mathbb A$ be a compact subset of $\bP(\G)$, and let $\mathbb B$ denote its closed convex hull. Since $\mathbb B \subset \mathbb M(\G)$ it is enough to show, in view of \cite[Chapter II, Section 4.3]{SW99}, that $\mathbb B$ is complete with respect to the uniformity induced by the dual seminorms.

In this context the terms {\em compact} and {\em closed} are unambiguous since both refer to the narrow topology, which is related to the family of dual seminorms in $\mathbb M(\G)$, see \eqref{topino1}, and is also equivalent to the topology associated with Wasserstein distance in $\bP(\G)$. Regarding completeness, however, we must specify that it is meant with respect to the uniformity generated by the dual seminorms: completeness depends on the uniformity, not only on the topology.

There are, in fact, two different uniformities at play: that related to the seminorms in $\mathbb M(\G)$, and that induced by $d_W$.

If we can show that any Cauchy net in $\mathbb B$ with respect to the dual seminorms is also Cauchy for $d_W$ then the claim follows immediately from the fact that $\mathbb B \subset \bP(\G)$ is closed, and $\bP(\G)$ is complete with respect to $d_W$ on $\bP(\G)$.

The proof therefore boils down to showing this implication, which is a consequence of  the Kantorovich--Rubinstein Theorem.

Let in fact $K$ be a compact subset of $\G$ for the uniform topology. Define
\[
\Theta_0=\{f \in C_b(\G) \mid f \ \hbox{is $1$--Lipschitz  and $\min_K f=0$}\},
\]
where Lipschitz continuity is understood with respect to the uniform norm on $\G$. We can choose $a >0$ such that
\[
\|f\|_{\psi_0} \leq 1 \txt{for any $f \in \Theta_0$,}
\]
where $\psi_0= a \, \chi_K$, and $\|\cdot\|_{\psi_0}$ is an admissible seminorm for the strict topology (see Section \ref{topino}). By \eqref{topino1}, we therefore get
\begin{equation}\label{lemuecompa1}
 \|\mu\|^*_{\psi_0} \geq  \sup\{ (\mu,f) \mid f \in \Theta_0\} \txt{for any
 $\mu \in \mathbb M(\G)$,}
\end{equation}
where $(\cdot,\cdot)$ denotes the duality pairing between $\mathbb M(\G)$ and $C_b(\G)$. Let $\mu_\al$ be a Cauchy net, with respect to the dual seminorms, contained in $\mathbb B$. According to Definition \ref{topino2}, there exists, for any $\eps >0$, an index $\al_0$ with
\[
\|\mu_\al - \mu_\be \|_{\psi_0} < \eps \txt{for any $\al, \be  \succ \al_0$.}
\]
By \eqref{lemuecompa1} this implies
\[
|\langle \mu_\al, f \rangle - \langle \mu_\be , f \rangle| < \eps \txt {for any $f \in \Theta_0$.}
\]
The inequality remains valid if we add to $f$ a constant, since such a constant cancels out under subtraction of the two integrals. Hence
\[
|\langle \mu_\al, f \rangle - \langle \mu_\be , f \rangle| < \eps \txt {for any $f \in C_b(\G)$ $1$--Lipschitz.}
\]
Applying the Kantorovich--Rubinstein Theorem to the Polish space $\G$ (\cite[Theorem 1.14]{V03}) we deduce
\[
d_W(\mu_\al,\mu_\be) < \eps,
\]
so that $\mu_\al$ is Cauchy with respect to the first Wasserstein distance. This concludes the argument.
\end{proof}

\smallskip

\begin{Proposition}\label{uecompa}
The set $\bP_0(\G)$ is compact in $\bP(\G)$.
\end{Proposition}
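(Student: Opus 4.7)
The plan is to observe that $\bP_0(\G)$ is, by construction, the closed convex hull of the set
\[
\mathbb A := \bigl\{ \xi \in \bP(\G) \mid \xi(0)=\mu_0,\ \xi \text{ is } g(\cdot,\xi_0(T))\text{--optimal for some } \xi_0 \in \bP(\G) \bigr\},
\]
so, by Lemma \ref{lemuecompa}, it suffices to prove that $\mathbb A$ is relatively compact in $\bP(\G)$.

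The key point is that the bounds \eqref{FD0} on $g$ are uniform in the measure variable: for every $\xi_0 \in \bP(\G)$, Lemma \ref{preservone} ensures $\xi_0(T) \in \bP(\T^N)$, and then \eqref{FD0} gives that the continuous function $g(\cdot,\xi_0(T))$ on $\T^N$ is bounded in absolute value by $M_g$ and admits $\om_g$ as a modulus of continuity, independently of $\xi_0$. Hence every $\xi \in \mathbb A$ is a $g_0$--optimal measure for some continuous final datum $g_0$ satisfying the conditions \eqref{cicciuzzo} with $M = M_g$ and $\om = \om_g$. Proposition \ref{compattonebis} therefore confines $\mathbb A$ inside a compact subset of $\bP(\G)$, so its closure $\overline{\mathbb A}$ is compact.

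To conclude, I would apply Lemma \ref{lemuecompa} to $\overline{\mathbb A}$, obtaining that $\overline{\co}(\overline{\mathbb A})$ is compact, and then use the elementary identity $\overline{\co}(\mathbb A)=\overline{\co}(\overline{\mathbb A})$ valid in any Hausdorff topological vector space (since $\overline{\co}(\mathbb A)$ is closed and contains $\mathbb A$, it contains $\overline{\mathbb A}$, and hence $\overline{\co}(\overline{\mathbb A})$). This gives the compactness of $\bP_0(\G)=\overline{\co}(\mathbb A)$.

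I do not anticipate any genuine obstacle here: the heavy lifting is done upstream, by Proposition \ref{compattonebis} (which produces the uniform compact envelope of the generating set, and which is itself essentially Prohorov combined with the compactness of optimal curves) and by Lemma \ref{lemuecompa} (where the true analytic subtlety of the section, the use of the strict topology and the Kantorovich--Rubinstein formula, is concentrated). The content of the present proposition is merely to verify that the definition of $\bP_0(\G)$ was arranged so as to fit exactly into this pre--built framework, the crucial structural input being the continuity of $g$ on the compact space $\T^N \times \bP(\T^N)$, which is what makes \eqref{FD0} uniform over $\bP(\G)$.
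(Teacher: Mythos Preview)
Your proposal is correct and follows essentially the same route as the paper: show that the generating set $\mathbb A$ lies inside the compact set furnished by Proposition~\ref{compattonebis} (with $M=M_g$, $\om=\om_g$ coming from \eqref{FD0}), then invoke Lemma~\ref{lemuecompa}. The paper asserts that $\mathbb A$ itself is compact and applies Lemma~\ref{lemuecompa} directly, whereas you more cautiously pass to $\overline{\mathbb A}$ and use $\overline{\co}(\mathbb A)=\overline{\co}(\overline{\mathbb A})$; this is a harmless variant of the same argument.
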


\begin{proof}
We first prove that
\begin{equation}\label{uecompa1}
\Bigl\{
\xi \in \bP(\G) \;\hbox{with $\xi(0)=\mu_0$, optimal for $g(\cdot,\xi_0(T))$, for some $\xi_0 \in \bP(\G)$}
\Bigr\}
\end{equation}
is compact. Indeed, the measures in \eqref{uecompa1} are supported on the same set of curves as in Proposition \ref{compattone}, with $M_g$, $\om_g$ from \eqref{FD0} in place of $M$, $\om$, respectively, which is compact in $\G$ by virtue of Proposition \ref{compattonebis}. This shows the claim.

Since $\bP_0(\G)$ is the closed convex hull of the compact set in \eqref{uecompa1}, the assertion follows from Lemma \ref{lemuecompa}.
\end{proof}

\smallskip

We next introduce some perturbations of the Lagrangian $L$ which will be used in the proof of Theorem \ref{mfg} to compensate for the lack of uniform continuity of $L$ on $\T^N \times \bP(\T^N) \times \R^N$.
Choose $\be_0$ such that
\begin{equation}\label{beta0}
 \beta_0 > \max \{ L(x,\mu,0) \mid (x,\mu) \in \T^N \times \bP(\T^N)\},
\end{equation}
and for each $\be >0$ define
\begin{equation}\label{betaccio0}
 L_\be(x,\mu,q) =  L(x,\mu,q) \wedge (\be \, |q| +\be_0) \txt{for any $(x,\mu,q) \in \T^N \times \bP(\T^N) \times \R^N$,}
\end{equation}
where for any $a, b \in \R$, $a \wedge b = \min \{a,b\}$.
Let $L_\be^*: \T^N \times \bP(\T^N) \times \R^N \to \R \cup \{ +\infty\}$ be its conjugate function
\[
L_\be^*(x,\mu,p) = \sup_q \bigl(p \cdot q-  L_\be(x,\mu,q)\bigr),
\]
and let $L^{**}_\be:  \T^N \times \bP(\T^N) \times \R^N \to \R$ be the biconjugate
\[
L_\be^{**}(x,\mu,q) = \sup_p \bigl(p \cdot q -  L^*_\be(x,\mu,p)\bigr).
\]
See Section \ref{bi} for basic facts on the above conjugate and biconjugate functions. We recall that
\begin{equation}\label{hullo}
\hbox{$q \mapsto  L_\be^{**}(x,\mu,q)$ is the convex hull of $q \mapsto L_\be(x,\mu,q)$ for any $(x,\mu)$,}
\end{equation}
see \cite[Chapter E, Theorem 1.3.5]{UrruL01}, i.e., the largest convex function in $q$ dominated by $L_\be$.

\smallskip

\begin{Lemma}\label{lemdemure1}
Given $\eps >0$, $\be >0$, $\be_0$ as in \eqref{beta0}, and a sequence $\xi_n$ in $\bP(\G)$ narrowly converging to a measure $\xi$, there exists $n_\eps$ such that
\begin{equation}\label{lemedure101}
L(x,\xi_n(t),q)  \geq  L_\be(x,\xi(t),q) -\eps
\end{equation}
for all $(x,t,q) \in \T^N \times [0,T] \times \R^N$ and all $n > n_\eps$.
\end{Lemma}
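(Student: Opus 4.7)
The plan is to split the velocity variable $q$ into two regimes. For $|q|$ large, the uniform superlinearity of $L$ (inherited from {\bf (H3)} for $H$) dominates the linear envelope $\be|q|+\be_0$ that appears in the definition of $L_\be$, so the inequality will be automatic regardless of which measure we evaluate $L$ at. For $|q|$ bounded, we are on a compact set on which $L$ is uniformly continuous in $\mu$, so the conclusion will follow from the uniform convergence $\xi_n(t)\to\xi(t)$ in $d_W$, supplied by Lemma~\ref{lemfranciosa1}.

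More precisely, by the uniform superlinearity of $L$ in $q$, uniform in $(x,\mu)\in\T^N\times\bP(\T^N)$, one can pick $R=R(\be,\be_0)>0$ so large that $L(x,\mu,q)\geq \be|q|+\be_0$ for every $(x,\mu)$ and every $q$ with $|q|\geq R$. In this regime, for any $t$ and any $n$,
\[
L(x,\xi_n(t),q)\;\geq\;\be|q|+\be_0\;\geq\;L_\be(x,\xi(t),q)\;\geq\;L_\be(x,\xi(t),q)-\eps,
\]
where the middle inequality is the pointwise bound $L_\be(x,\mu,q)\leq \be|q|+\be_0$ built into definition \eqref{betaccio0}.

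For $|q|\leq R$ we exploit continuity of $L$ on the compact set $\T^N\times\bP(\T^N)\times\ov{B}(0,R)$: by uniform continuity, there exists $\delta>0$ such that $|L(x,\mu_1,q)-L(x,\mu_2,q)|<\eps$ whenever $d_W(\mu_1,\mu_2)<\delta$, uniformly in $(x,q)$ with $|q|\leq R$. By Lemma~\ref{lemfranciosa1}, narrow convergence $\xi_n\to\xi$ in $\bP(\G)$ implies $\xi_n(t)\to\xi(t)$ in $d_W$ uniformly in $t\in[0,T]$, so there is an index $n_\eps$ after which $d_W(\xi_n(t),\xi(t))<\delta$ for every $t$. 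Combining this with the trivial bound $L\geq L_\be$,
\[
L(x,\xi_n(t),q)\;\geq\;L(x,\xi(t),q)-\eps\;\geq\;L_\be(x,\xi(t),q)-\eps
\]
for such $(x,t,q)$ and all $n>n_\eps$. Merging the two regimes gives \eqref{lemedure101} on the whole of $\T^N\times[0,T]\times\R^N$.

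The only subtlety is that $L$ is not uniformly continuous on $\T^N\times\bP(\T^N)\times\R^N$, precisely because of its superlinear growth in $q$; this is the very reason for introducing the truncation $L_\be$, which provides the linear envelope handling the large--$q$ regime for free and leaves only a uniform--continuity argument on a compact cylinder in $q$.
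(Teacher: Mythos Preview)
Your proof is correct and follows essentially the same approach as the paper's: split into large $|q|$ (handled by uniform superlinearity and the linear cap in $L_\be$) and bounded $|q|$ (handled by uniform continuity of $L$ on a compact cylinder together with Lemma~\ref{lemfranciosa1}). Your treatment of the large--$q$ regime is in fact slightly more explicit than the paper's, since you spell out the intermediate inequality $\be|q|+\be_0\geq L_\be(x,\xi(t),q)$ that bridges the two different measure arguments.
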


\begin{proof}
By uniform superlinearity of the Lagrangian $L$, we can choose a closed ball $B$ in $\R^N$ with
\begin{equation}\label{lemdure11}
 L(x,\mu,q) > \be \, |q| + \be_0 = L_\be(x,\mu,q) \txt{for $(x,\mu,q) \in \T^N \times \bP(\T^N) \times (\R^N \setminus B)$.}
\end{equation}
By Lemma \ref{lemfranciosa1} and the uniform continuity of $L$ in $\T^N \times \bP(\T^N) \times B$, we can ensure that, for $n$ sufficiently large,
\begin{equation}\label{lemdure12}
 L(x,\xi_n(t),q) \geq L(x,\xi(t),q) - \eps \geq L_\be(x,\xi(t),q) -\eps
\end{equation}
for $(x,t,q) \in \T^N \times [0,T] \times B$. Inequalities \eqref{lemdure11}, \eqref{lemdure12} yield the assertion.
\end{proof}

\smallskip

Next statement is pictorially evident; we provide nevertheless a formal proof.

\smallskip

\begin{Proposition}\label{betino0}
We have
\[
\lim_{\be \to + \infty}  L^{**}_\be(x,\mu,q) =  L(x,\mu,q)
\txt{pointwise in $\T^N \times \bP(\T^N) \times \R^N$.}
\]
\end{Proposition}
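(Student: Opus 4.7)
The plan is to prove the stronger assertion that for each fixed $(x,\mu,q_0)$, one actually has $L^{**}_\beta(x,\mu,q_0)=L(x,\mu,q_0)$ for every $\beta$ sufficiently large, which immediately yields the claimed pointwise convergence. The upper inequality $L^{**}_\beta \le L$ is free: since $L_\beta\le L$ by construction and $L(x,\mu,\cdot)$ is itself convex (so it coincides with its own biconjugate in $q$), the biconjugate of the smaller function satisfies $L^{**}_\beta \le L$.

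For the matching lower bound, I would construct an affine function $h:\R^N\to\R$ which lies below $L_\beta$ everywhere and attains the value $L(x,\mu,q_0)$ at $q_0$. Since $L(x,\mu,\cdot)$ is convex and finite on $\R^N$, its subdifferential in $q$ at $q_0$ is nonempty; pick a subgradient $\xi_0$ and set
\[
h(q) = L(x,\mu,q_0) + \xi_0\cdot(q-q_0).
\]
The subgradient inequality immediately gives $h(q)\le L(x,\mu,q)$ for all $q$. The crux of the argument is to verify, for $\beta$ large enough, the companion bound $h(q)\le \beta|q|+\beta_0$ on the whole $\R^N$. By Cauchy--Schwarz this reduces to the two conditions $|\xi_0|\le\beta$ and $L(x,\mu,q_0)-\xi_0\cdot q_0\le\beta_0$. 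The first holds for any $\beta\ge|\xi_0|$; the second is obtained by applying the subgradient inequality at $q=0$, which yields $L(x,\mu,q_0)-\xi_0\cdot q_0\le L(x,\mu,0)<\beta_0$, where the strict inequality is precisely the content of \eqref{beta0}.

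Once $h\le L_\beta$ is established, the fact that $h$ is affine (hence convex) implies $h\le L^{**}_\beta$ by the characterization of $L^{**}_\beta$ as the largest convex minorant of $L_\beta$. Evaluating at $q_0$ gives $L^{**}_\beta(x,\mu,q_0)\ge h(q_0)=L(x,\mu,q_0)$, which combined with the upper bound produces equality for every $\beta\ge|\xi_0|$, and in particular the desired convergence as $\beta\to+\infty$. The only step that requires real attention is the affine majorization $h\le\beta|\cdot|+\beta_0$, but the calibration of $\beta_0$ in \eqref{beta0} is tailored precisely to make this work, so no serious obstacle is anticipated.
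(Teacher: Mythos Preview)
Your proof is correct and follows essentially the same approach as the paper's: both pick an affine subtangent $h(q)=L(x,\mu,q_0)+\xi_0\cdot(q-q_0)$ to $L(x,\mu,\cdot)$ at $q_0$, use the subgradient inequality at $q=0$ together with \eqref{beta0} to get $L(x,\mu,q_0)-\xi_0\cdot q_0<\beta_0$, and then observe that for $\beta\ge|\xi_0|$ the affine function $h$ lies below $L_\beta$, hence below its convex hull $L^{**}_\beta$. Your presentation is in fact somewhat more direct than the paper's, which phrases the key step as a contradiction and then routes the conclusion through an increasing family of compact sets $K_\beta$; your version goes straight to the pointwise equality for $\beta\ge|\xi_0|$.
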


\begin{proof}
Fix $\be >0$, $(x_0,\mu_0,q_0) \in \T^N \times \bP(\T^N) \times \R^N$, and
consider an affine subtangent
\[
\phi(q) =   L(x_0,\mu_0,q_0) + p_0 \cdot(q-q_0)
\txt{for some $p_0 \in \partial_q  L(x_0,\mu_0,q_0)$}
\]
to $q \mapsto  L(x_0,\mu_0,q)$ at $q_0$, where $\partial_q  L(x_0,\mu_0,q_0)$ is the subdifferential of $q \mapsto  L(x_0,\mu_0,q)$ at $q_0$. By \eqref{beta0} we have
\begin{equation}\label{betino01}
 \be_0 >  L(x_0,\mu_0,0) \geq L(x_0,\mu_0,q_0) - p_0 \cdot q_0.
\end{equation}
If $L_\be(x_0,\mu_0,\cdot)$ does not dominate $\phi$ on all of $\R^N$, there exists $q_1$ in the closure of
\[
\{q \mid L_\be(x_0,\mu_0,q) < \varphi(q)\}
\]
with
\[
L(x_0,\mu_0,q_0) + p_0 \cdot(q_1-q_0) = \be_0 + \be \, |q_1| = L_\be(x_0,\mu_0,q_1),
\]
and from \eqref{betino01} we deduce
\[
\be \, |q_1| < p_0 \cdot q_1 \leq |p_0| \, |q_1|,
\]
so that
\begin{equation}\label{betino02}
  |p_0| > \be.
\end{equation}
On the other hand, $q \mapsto  L(x,\mu, q)$ is locally Lipschitz continuous, uniformly with respect to $(x,\mu)$, and superlinear, hence there exists a compact set $K_\be \subset \R^N$ with
\[
\partial_q  L(x_0,\mu_0, q) \subset B(0,\be)
\txt{and} \qquad
L(x_0,\mu_0, q) = L_\be(x_0,\mu_0, q) \txt{for any $q \in K_\be$.}
\]
Combining \eqref{betino02}, \eqref{hullo} and the characterization of the convex hull as the pointwise supremum of all the affine functions dominated by the given function, we get
\[
L(x_0,\mu_0,q) =  L_\be(x_0,\mu_0,q) =  L^{**}_\be(x_0,\mu_0,q) \txt{for $q \in K_\be$.}
\]
The family $K_\be$, for $\be$ varying in $\R^+$, is increasing with respect to inclusion and satisfies
\[
\R^N = \bigcup_{\be >0} K_\be.
\]
This implies the claim.
\end{proof}

\medskip

\begin{proof}[{\bf Proof of Theorem \ref{mfg}}]
Taking into account Proposition \ref{uecompa}, the fact that $\bP_0(\G)$ is a subset of $\mathbb M(\G)$ where the narrow topology is induced by a family of seminorms, and the Kakutani–Glicksberg–Fan Theorem \ref{KGF}, the existence of a fixed point in $\bP_0(\G)$ is a consequence of the following statement:
\begin{itemize}
  \item[--] The multivalued map $\cT:  \bP_0(\G) \to \bP_0(\G)$ has nonempty convex values and closed graph.
\end{itemize}
We now prove it. Let $\xi \in \bP_0(\G)$. By Proposition \ref{esisto}, there exist $g(\cdot,\xi(T))$--optimal measures associated to \eqref{HJ-} with $H(x,\xi(t),q)$ in place of $H_0(x,t,q)$, and with initial distribution $\mu_0$. Such measures belong to $\cT(\xi)$, which is then nonempty.
An element $\xi^*$ in $\cT(\xi)$ is characterized, by Proposition \ref{optimum} (ii) and Theorem \ref{kanton}, by the identity
\begin{equation}\label{demure01}
   \int  \int_0^T L(\zeta,\xi(t),\dot \zeta) \, dt \, d \xi^*(\zeta) = \langle \mu_0, v(\cdot,0) \rangle - \langle \xi^*(T), g(\cdot, \xi(T)) \rangle,
\end{equation}
where $v$ is the LO solution to \eqref{HJ} with final datum $g(x,\xi(T))$. A direct computation shows that if \eqref{demure01} holds for measures $\xi^*_1$, $\xi^*_2$, then it also holds for any convex combination of them. Thus $\cT(\xi)$ is convex.

It remains to show that the graph of $\cT$ is closed. Consider a sequence $(\xi_n, \xi_n^*)$ in the graph of $\cT$ with
\[
 (\xi_n,\xi_n^*)  \to (\xi,\xi^*).
\]
We must prove that $\xi^*  \in \cT(\xi)$.

Let $v_n$, $v$ be the LO solutions to \eqref{HJ} with Hamiltonians $H(x,\xi_n(t),p)$, $H(x,\xi(t),p)$ and final data $g(x,\xi_n(T))$, $g(x,\xi(T))$, respectively. By Lemma \ref{lemfranciosa1} we have $\xi_n(t) \to \xi(t)$ and $g(x,\xi_n(T)) \to g(x,\xi(T))$ uniformly in $[0,T]$ and $\T^N$, respectively. In addition, by Proposition \ref{stabia} $v_n(\cdot,0) \to v(\cdot,0)$ uniformly in $\T^N$. Using optimality of $\xi^*_n$ and the fact that $\xi^*_n(0)= \mu_0$ for any $n$, we deduce from Proposition \ref{optimum} (ii) that
\begin{equation}\label{demure2}
   \int \, \int_0^T L(\zeta, \xi_n(t), \dot \zeta) \, dt \, d \xi^*_n(\zeta) = \langle \mu_0,v_n(\cdot,0)\rangle - \langle \xi^*_n(T),  g(\cdot,\xi_n(T))\rangle,
\end{equation}
for any $n$. By Lemma \ref{lemdemure3} and the convergences above,
\[
\langle \mu_0,v_n(\cdot,0)\rangle \to \langle \mu_0,v(\cdot,0)\rangle,
\qquad
\langle \xi^*_n(T),  g(\cdot,\xi_n(T))\rangle \to \langle \xi^*(T),  g(\cdot,\xi(T))\rangle.
\]
Passing to the limit in \eqref{demure2}, we obtain
\begin{equation}\label{demure11}
  \lim_n  \int  \int_0^T  L(\zeta, \xi_n(t), \dot \zeta) \, dt \, d\xi^*_n(\zeta) =  \langle \mu_0,v(\cdot,0)\rangle  - \langle \xi^*(T), g(\cdot,\xi(T))\rangle.
\end{equation}
Fix $\be >0$, $\eps >0$. Since $L_\be^{**}$ is convex in $q$, and continuous in all arguments (Proposition \ref{graziolla}), the functional
\[
\zeta \mapsto \int_0^T  L_\be^{**}(\zeta,\xi(t),\dot \zeta) \, dt
\]
is lsc in $\G$ by \cite[Theorem 3.6]{BuGH98} for any measure $\xi$.
Using Proposition \ref{paglia} and Lemma \ref{lemdemure1}, we get
\begin{align*}
\lim_n \int \int_0^T  L(\zeta,\xi_n(t),\dot\zeta) \, dt  \, d\xi^*_n
&\geq \liminf_n   \int  \int_0^T \bigl[   L_\be(\zeta,\xi(t),\dot\zeta) - \eps \bigr] \, dt  \, d\xi^*_n \\
&\geq \liminf_n \int \int_0^T \bigl[  L^{**}_\be (\zeta,\xi(t),\dot\zeta) - \eps \bigr] \, dt \,d\xi^*_n \\
&\geq \int \int_0^T  \bigl[ L^{**}_\be(\zeta,\xi(t),\dot\zeta) - \eps \bigr]\, dt  \, d\xi^*.
\end{align*}
Letting first $\eps \to 0$ and then $\be \to +\infty$, and using Proposition \ref{betino0} together with the monotone convergence theorem, we get
\[
\int_0^T  L^{**}_\be(\zeta,\xi(t),\dot\zeta)\, dt \to  \int_0^T L(\zeta,\xi(t),\dot\zeta) \, dt  \txt {for any $\zeta \in \G$,}
\]
which implies, again using the monotone convergence theorem,
\[
\int \int_0^T  L^{**}_\be(\zeta,\xi(t),\dot\zeta)\, dt \, d \xi^* \to \int \int_0^T L(\zeta,\xi(t),\dot\zeta) \, dt \, d\xi^*.
\]
We finally derive
\begin{equation}\label{demure1}
   \lim_n \int \int_0^T L(\zeta,\xi_n(t),\dot\zeta) \, dt  \, d\xi^*_n  \geq \int \int_0^T    L(\zeta,\xi(t),\dot\zeta) \, dt  \, d\xi^*.
\end{equation}
By combining \eqref{demure11}, \eqref{demure1} and exploiting Proposition \ref{narduz}, we get
\begin{align*}
 \int\int_0^T L(\zeta,\xi(t),\dot\zeta) \, dt  \, d\xi^*
&\leq  \langle \mu_0,v(\cdot,0)\rangle - \langle \xi^*(T), g(\cdot,\xi(T))\rangle \\
&\leq S_T(\mu_0,\xi^*(T)).
\end{align*}
By Proposition \ref{pizza} equality must prevail in the above formula. This yields that $\xi^* \in \cT(\xi)$ by Proposition \ref{optimum} (ii), and concludes the argument.
\end{proof}

\bigskip

\section{MFG systems in a generalized form}\label{generale}

In this section we assume, in addition to {\bf (H1)}--{\bf (H3)}, the additional condition {\bf (H4)} on $H(x,\mu,p)$. By the results of Section \ref{field}, these hypotheses guarantee that for any $\xi \in \bP(\G)$ and any continuous function
\[
g: \T^N \times  \bP(\T^N) \to \R
\]
there exists a Borel vector field
\[
W_{g(\cdot,\xi(T))}: \T^N  \times [0,T] \to \R^N
\]
enjoying all the properties established in Sections \ref{field} and \ref{continual}.

\smallskip

Given $\mu_0 \in \bP(\T^N)$, we consider the system
\begin{equation}\label{MFG0}  \tag{MFG}
  \left \{ \begin{array}{ll}
  -u_t +  H(x, \xi(t),- Du) =0   &  \txt{in $\T^N \times [0,T)$},\\[2pt]
  u(x,T)=  g(x, \xi(T))   & \txt{for $x \in \T^N$}, \\[2pt]
  \xi_t(t) + \nabla \cdot \bigl(\xi(t) W_{g(\cdot,\xi(T))}(x,t) \bigr) =0 & \txt{in $\T^N \times [0,T]$},\\[2pt]
  \xi(0)= \mu_0  &
           \end{array} \right .
\end{equation}

This is not the classical MFG system, since the vector field driving the continuity equation is
\[
W_{g(\cdot,\xi(T))}(x,t)
\]
instead of the usual $H_p(x,t,-Dv(x,t))$.

\smallskip

\begin{Definition}
A pair $(v,\xi^*)  \in C(\T^N \times [0,T]) \times \bP(\G)$
is said to be a {\em solution} of \eqref{MFG0} if
\begin{itemize}
  \item[--]  $v$ is the LO solution of the HJ equation with final datum $g(\cdot,\xi^*(T))$;
  \item[--] the evaluation curve of $\xi^*$ is a solution to the continuity equation with initial datum $\mu_0$.
\end{itemize}
\end{Definition}

\smallskip

Recall the multivalued map $\cT: \bP(\G) \to \bP(\G)$ defined by
\[
\cT(\xi) = \{ \hbox{$g( \cdot,\xi(T))$--optimal measures $\xi_0$ with $\xi_0(0)= \mu_0$}\}.
\]

\smallskip

\begin{Theorem}\label{teoremone}
Assume {\bf (H1)}--{\bf (H4)}. For every $\mu_0 \in \bP(\T^N)$ and $g \in C(\T^N \times \bP(\T^N))$, any fixed point $\xi^*$ of $\cT$, together with the LO solution $v$ of the corresponding HJ equation with final datum $g(\cdot,\xi^*(T))$, provides a solution of \eqref{MFG0}. If, in addition, {\bf (H5)} holds then the curve $\xi^*(t)$ is $(1+\eps_0)^*$--absolutely continuous.
\end{Theorem}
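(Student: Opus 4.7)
The proof is essentially an assembly of results already established in the paper, applied to the abstract Hamiltonian obtained by freezing the measure argument along the fixed point's evaluation curve.

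The plan is the following. Start from a fixed point $\xi^*$ of $\cT$, whose existence is guaranteed by Theorem \ref{mfg}. By the very definition \eqref{defTau} of $\cT$, the membership $\xi^* \in \cT(\xi^*)$ means that $\xi^*(0)=\mu_0$ and that $\xi^*$ is $g(\cdot,\xi^*(T))$--optimal with respect to the Hamiltonian $H(x,\xi^*(t),p)$. Define the abstract time--dependent Hamiltonian
\[
H_0(x,t,p) := H(x,\xi^*(t),p), \qquad g_0(x):=g(x,\xi^*(T)).
\]
By Lemma \ref{preservone} the curve $t\mapsto \xi^*(t)$ is narrowly continuous, hence $H_0$ inherits conditions {\bf (H1)}--{\bf (H4)} from $H$ (continuity in all variables follows from the continuity of $H$ jointly in $(x,\mu,p)$ composed with the continuous curve $\xi^*(t)$; convexity, superlinearity and $p$--differentiability are pointwise in $t$). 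Consequently all results of Sections \ref{LLOO}--\ref{continual} apply to $H_0$ and $g_0$, and in particular the Borel vector field $W_{g_0}=W_{g(\cdot,\xi^*(T))}$ is well defined.

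Next, let $v$ be the Lax--Oleinik solution of \eqref{HJ} with Hamiltonian $H_0$ and final condition $g_0$. By construction $v$ solves the first and second equations of \eqref{MFG0} in the viscosity sense, so it remains to verify the continuity equation and the initial condition. The initial condition $\xi^*(0)=\mu_0$ holds by definition of $\cT$. For the continuity equation, we invoke Theorem \ref{assone} (i): since $\xi^*$ is $g_0$--optimal in the abstract framework just set up, its evaluation curve solves
\[
\partial_t \xi^*(t) + \nabla\cdot\bigl(\xi^*(t)\,W_{g(\cdot,\xi^*(T))}(x,t)\bigr)=0 \qquad \text{in } \T^N\times[0,T],
\]
in the distributional sense specified by Definition \ref{continua}. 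Together with the items above, this shows that $(v,\xi^*)$ is a solution of \eqref{MFG0}.

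For the final assertion, assuming additionally {\bf (H5)}, the same transfer of assumptions from $H$ to $H_0$ gives that $H_0$ satisfies {\bf (H5)}, and hence the abstract Lagrangian $L_0$ satisfies the growth bound \eqref{H11bis}. Theorem \ref{assone} (ii) then applies and yields that $\xi^*(t)$ is $(1+\eps_0)^*$--absolutely continuous, completing the proof. No genuine obstacle is expected; the only care needed is in verifying that {\bf (H1)}--{\bf (H4)} (and, when relevant, {\bf (H5)}) transfer without loss to the frozen Hamiltonian $H_0$, which is immediate thanks to the continuity of $t\mapsto \xi^*(t)$ granted by Lemma \ref{preservone}.
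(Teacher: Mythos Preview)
Your proposal is correct and follows the same approach as the paper: reduce to the abstract Hamiltonian $H_0(x,t,p)=H(x,\xi^*(t),p)$ (which the paper sets up at the beginning of Section \ref{generale}), observe that the fixed point is $g(\cdot,\xi^*(T))$--optimal by definition of $\cT$, and then invoke Theorem \ref{assone} for both the continuity equation and, under {\bf (H5)}, the $(1+\eps_0)^*$--absolute continuity. Your write-up is in fact more explicit than the paper's, which compresses the argument to a few lines.
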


\begin{proof}
By Theorem \ref{mfg}, there exists a fixed point $\xi^* \in \bP(\G)$ of $\cT$. By Theorem \ref{assone} $\xi^*(t)$ then solves the continuity equation
\[
\xi^*_t(t) + \nabla \cdot \bigl(\xi^*(t) \, W_{g(\cdot,\xi^*(T))}(x,t) \bigr) =0.
\]
This yields the first part of the assertion.
If {\bf (H5)} holds the absolute continuity of $\xi^*(t)$ with exponent
$(1+\eps_0)^*$ follows from Theorem \ref{assone}.
\end{proof}

\bigskip

\appendix

\section{Disintegration theory}\label{disintegra0}

We briefly recall some standard facts on the disintegration of measures (see, e.g., \cite{DM78}).

\smallskip

\begin{Definition}\label{misuramisura}
Let $X$, $Y$ be Polish spaces. A family $\eta_y$, with $y$ varying in $Y$, of probability measures on $X$ is said {\em measurable} if
\begin{equation}\label{misuramisura1}
 y \mapsto \eta_y(B)  \quad \hbox{is Borel for any Borel subset $B$ of $X$}.
\end{equation}
\end{Definition}

Given such a family, and a Borel probability measure $\nu$ on $Y$, we define a probability measure $\mu$ on $X$ by the {\em pull--back} formula
\begin{equation}\label{pullback}
  \mu(B)= \int \eta_y(B) \, d \nu.
\end{equation}

\smallskip

We next consider a Borel map $\Phi: X \to Y$ satisfying
\begin{equation}\label{A}
  \mathrm{spt}(\eta_y) \subset \Phi^{-1}(y) \qquad\hbox{for $\nu$--almost all  $y \in Y$}.
\end{equation}

\begin{Proposition}\label{dis}
Assume \eqref{A} holds and define $\mu$ by \eqref{pullback}. Then $\mu$ is a probability measure on $X$ and
\[
  \nu = \Phi \# \mu.
\]
\end{Proposition}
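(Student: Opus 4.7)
The plan is to verify the two assertions separately: first that the pull--back formula defines a genuine Borel probability measure on $X$, and then that $\Phi$ pushes this measure forward to $\nu$.

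For the first point, the measurability hypothesis \eqref{misuramisura1} ensures that $y \mapsto \eta_y(B)$ is a bounded Borel function for every Borel $B \subset X$, so the integral in \eqref{pullback} is well defined. Set--function properties of $\mu$ follow from the corresponding properties of each $\eta_y$: non--negativity and the value $\mu(X) = \int \eta_y(X)\, d\nu = \int 1 \, d\nu = 1$ are immediate, while $\sigma$--additivity follows from the $\sigma$--additivity of each $\eta_y$ together with the monotone convergence theorem applied on $Y$ to the increasing sequence of Borel functions $y \mapsto \sum_{k \leq n} \eta_y(B_k)$, where $\{B_k\}$ is any countable disjoint family of Borel subsets of $X$.

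For the second point, the task is to show $\mu(\Phi^{-1}(C)) = \nu(C)$ for every Borel $C \subset Y$. By definition,
\[
\mu(\Phi^{-1}(C)) = \int \eta_y(\Phi^{-1}(C))\, d\nu(y),
\]
and the key step is to compute the integrand using \eqref{A}. For $\nu$--almost every $y$, the support of $\eta_y$ lies inside $\Phi^{-1}(y)$, so if $y \in C$ then $\Phi^{-1}(C) \supset \Phi^{-1}(y) \supset \mathrm{spt}(\eta_y)$ gives $\eta_y(\Phi^{-1}(C)) = 1$, whereas if $y \notin C$ then $\Phi^{-1}(C)$ and $\Phi^{-1}(y)$ are disjoint, forcing $\eta_y(\Phi^{-1}(C)) = 0$. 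Hence $\eta_y(\Phi^{-1}(C)) = \chi_C(y)$ for $\nu$--a.e. $y$, and integrating yields $\mu(\Phi^{-1}(C)) = \nu(C)$.

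No serious obstacle is expected; the argument is essentially bookkeeping. The only subtlety worth flagging is that \eqref{A} is assumed only $\nu$--almost everywhere, which is exactly what allows one to discard the measure--zero set of bad $y$'s when passing from the pointwise identity $\eta_y(\Phi^{-1}(C)) = \chi_C(y)$ to the integrated form. The Borel measurability of $\Phi$ is used to ensure $\Phi^{-1}(C)$ is Borel in $X$, so that $\eta_y(\Phi^{-1}(C))$ makes sense and is Borel in $y$ by \eqref{misuramisura1}.
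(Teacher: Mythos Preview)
Your proof is correct and follows essentially the same approach as the paper: both reduce to evaluating $\eta_y(\Phi^{-1}(C))$ via the support condition \eqref{A}, with the paper writing this as a chain of equalities through $\eta_y(\Phi^{-1}(C)\cap\Phi^{-1}(y))$ and you instead splitting into the cases $y\in C$ and $y\notin C$. Your treatment of the first assertion (that $\mu$ is a probability measure) is more explicit than the paper's, which simply takes it for granted.
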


\begin{proof}
Since \eqref{pullback} defines a probability measure, it remains to check that
\[
  \mu(\Phi^{-1}(E)) = \nu(E)
\]
for any Borel set $E$ in $Y$. Using \eqref{A}, we compute:
\begin{eqnarray*}
   \mu(\Phi^{-1}(E)) &=& \int \eta_y(\Phi^{-1}(E)) \, d \nu
    = \int \eta_y(\Phi^{-1}(E) \cap \Phi^{-1}(y)) \, d \nu \\
  &=&  \int_E \eta_y(\Phi^{-1}(E) \cap \Phi^{-1}(y)) \, d \nu
   = \int_E \eta_y( \Phi^{-1}(y)) \, d \nu \\
  &=& \int_E \eta_y(X) \, d\nu = \nu(E).
\end{eqnarray*}
\end{proof}

\smallskip

The disintegration theorem allows a more precise construction. Given a probability measure $\mu$ on $X$ and defining $\nu = \Phi \# \mu$, a disintegration of $\mu$ with respect to $\Phi$ consists in a measurable family of probability measures $\eta_y$ with $y\in Y$ supported on $\Phi^{-1}(y)$ for $\nu$--almost all $y$, such that
\begin{equation}\label{disintegra}
   \int   g  \, d\mu= \int \left [ \int g \, d \eta_y \right ] \, d \nu
\end{equation}
for any $\mu$--integrable function $g$.

\smallskip
The disintegration theorem reads:

\smallskip

\begin{Theorem}\label{disbis}
Let $X$ and $Y$ be Polish spaces and $\Phi: X \to Y$ a Borel map. Then any Borel probability measure $\mu$ on $X$ admits a disintegration with respect to $\Phi$.
\end{Theorem}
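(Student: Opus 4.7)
\emph{Step 1 (compactification).} The plan is to realize each $\eta_y$ as a positive linear functional on $C(\bar X)$ supplied by the Riesz representation theorem, after embedding $X$ into a compact metric space. Since $X$ is Polish, I embed it homeomorphically onto a Borel subset of a compact metrizable space $\bar X$ (for instance the Hilbert cube) and extend $\mu$ to $\bar X$ by declaring $\mu(\bar X\setminus X)=0$. Set $\nu=\Phi\#\mu$; any disintegration of the extended data with $\eta_y$ concentrated on $X\cap\Phi^{-1}(y)$ for $\nu$-a.e.\ $y$ restricts to a disintegration in the original setting.

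\emph{Step 2 (Riesz functionals).} $C(\bar X)$ is separable; fix a countable $\Q$-vector subspace $\mathcal F\subset C(\bar X)$ containing $1$ and dense in the sup norm. For each $f\in\mathcal F$ the map $E\mapsto\int_{\Phi^{-1}(E)} f\,d\mu$ defines a signed measure on $Y$ absolutely continuous with respect to $\nu$ and with total variation at most $\|f\|_\infty$; the Radon--Nikodym theorem yields a Borel density $T_f\colon Y\to\R$ with
\[
\int_{\Phi^{-1}(E)} f\,d\mu=\int_E T_f\,d\nu
\qquad\hbox{for every Borel }E\subset Y.
\]
Countability of $\mathcal F$ allows me to remove a single $\nu$-null set $N$ outside of which, simultaneously for all $f,g\in\mathcal F$ and $q\in\Q$: $T_1(y)=1$, $T_f(y)\ge 0$ whenever $f\ge 0$, $|T_f(y)|\le\|f\|_\infty$, and $T_{qf+g}(y)=qT_f(y)+T_g(y)$. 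For $y\notin N$ the assignment $f\mapsto T_f(y)$ is then a bounded positive $\Q$-linear functional on $\mathcal F$ of norm one, and extends by uniform continuity to a positive linear functional of norm one on $C(\bar X)$. The Riesz representation theorem produces a Borel probability measure $\eta_y$ on $\bar X$ with $T_f(y)=\int f\,d\eta_y$ for every $f\in C(\bar X)$; define $\eta_y$ arbitrarily on $N$.

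\emph{Step 3 (verification).} A Dynkin monotone class argument extends the identity from $\mathcal F$ to
\[
\mu(B\cap\Phi^{-1}(E))=\int_E\eta_y(B)\,d\nu(y)
\]
for all Borel $B\subset\bar X$ and $E\subset Y$, which in particular contains the measurability of $y\mapsto\eta_y(B)$; the case $E=Y$ together with linearity and monotone convergence then yields the disintegration formula for every $\mu$-integrable $g$. For the support property I use that $Y$ is Polish and admits a countable basis $\{U_n\}$: choosing $B=\Phi^{-1}(U_n^c)$ and $E=U_n$ gives $\int_{U_n}\eta_y(\Phi^{-1}(U_n^c))\,d\nu=0$, hence $\eta_y(\Phi^{-1}(U_n^c))=0$ for $\nu$-a.e.\ $y\in U_n$. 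Intersecting the corresponding full-measure sets over $n$ and using that $\{U_n\}$ separates points of $Y$, one concludes $\eta_y(\bar X\setminus\Phi^{-1}(y))=0$ for $\nu$-a.e.\ $y$; the choice $B=\bar X\setminus X$, $E=Y$ finally gives $\eta_y(\bar X\setminus X)=0$ a.e., so each $\eta_y$ descends to a Borel probability on $X$ supported on $\Phi^{-1}(y)$.

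\emph{Main obstacle.} The fiber support condition is the delicate point: the family of fibers $\Phi^{-1}(y)$ is uncountable, whereas only countably many a.e.\ identities may be enforced simultaneously. Second countability of $Y$ resolves this by replacing the uncountable family of point constraints with countably many open-set constraints, one per basic open $U_n$, each settled by a single Radon--Nikodym identity. A secondary subtlety is arranging the countable a.e.\ relations used to build the Riesz functional (positivity, operator norm, $\Q$-linearity) so that they all hold outside a single $\nu$-null set, which is ensured by the countability of $\mathcal F\times\mathcal F\times\Q$.
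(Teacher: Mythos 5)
The paper does not prove Theorem~\ref{disbis}: it cites the disintegration theorem from the literature (Dellacherie--Meyer) and only proves the easy converse direction, Proposition~\ref{dis}. Your proof is correct and follows the classical route found in those references: embed $X$ as a Borel subset of a compact metric $\bar X$ so that $C(\bar X)$ is separable; obtain Radon--Nikodym densities $T_f$ of $E\mapsto\int_{\Phi^{-1}(E)}f\,d\mu$ against $\nu$ for a countable dense $\Q$-vector subspace $\mathcal F$; discard one $\nu$-null set so that $y\mapsto T_f(y)$ is, for every remaining $y$, a $\Q$-linear functional of norm one with $T_1(y)=1$; extend and apply Riesz to get $\eta_y$; and localize each $\eta_y$ to its fibre via a countable base and to $X$ via $\mu(\bar X\setminus X)=0$. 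The genuinely delicate steps are handled correctly, in particular the replacement of the uncountable family of point constraints $\{\Phi^{-1}(y)\}$ by countably many basic-open constraints, and the positivity of the extended functional (which, as you could make explicit, follows solely from $\|L\|\le 1$ and $L(1)=1$, independently of whether nonnegative members of $\mathcal F$ approximate a given nonnegative $f$). One minor terminological remark: the paper's formula $\mathrm{spt}(\eta_y)\subset\Phi^{-1}(y)$ should be understood measure-theoretically as $\eta_y\bigl(X\setminus\Phi^{-1}(y)\bigr)=0$ — the fibre of a merely Borel map need not be closed — and that is exactly what your argument yields and what the paper's proof of Proposition~\ref{dis} actually uses.
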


\bigskip

\section{The LO solution} \label{LxO}

The aim of this appendix is to prove the following.

\begin{Theorem}\label{LxxO}
Let $g_0$ be a continuous function  on $\T^N$. Under assumptions {\bf (H1)}--{\bf (H3)} the function $v$ given by the Lax--Oleinik formula \eqref{LO} is a continuous solution of \eqref{HJ-} agreeing with $g_0$ at $t=T$.
\end{Theorem}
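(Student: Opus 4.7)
The proof decomposes into four blocks: well-posedness of \eqref{LO}, continuity of $v$, the dynamic programming principle, and the two viscosity half-inequalities. Existence of the minimum in \eqref{LO} for $(x,t) \in \T^N \times [0,T)$ is immediate from Propositions \ref{sciare} and \ref{presciare} together with the continuity of $g_0$. Testing \eqref{LO} against the constant competitor $\zeta(s)\equiv x$ and using the universal lower bound $L_0 \ge m_{L_0}$ yields
\[
m_{L_0}(T-t) - \|g_0\|_\infty \;\le\; v(x,t) \;\le\; \|g_0\|_\infty + M_0(T-t),
\]
so $v$ is bounded. Setting $v(\cdot,T) := g_0$, the final condition is satisfied by construction; the required uniform convergence $v(x,t)\to g_0(x)$ as $t \to T^-$ is obtained by splitting competitors $\zeta$ on $[t,T]$ according to whether $\zeta(T) \in B(x,\eta)$. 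In the first case continuity of $g_0$ yields direct control, while in the second the superlinear bound $L_0(\cdot,\cdot,q) \ge R|q| - C_R$ forces the action to exceed $R\eta - C_R(T-t)$, which is arbitrarily large once $R$ is large and $T-t$ small.

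Continuity of $v$ on $\T^N \times [0,T)$ amounts to a pair of semicontinuity arguments. Lower semicontinuity at $(x_0,t_0)$ follows by taking minimizers $\zeta_n$ associated with $(x_n,t_n)\to(x_0,t_0)$, extracting a uniformly converging subsequence through Proposition \ref{sciare}, and passing to the $\liminf$ via Proposition \ref{presciare} and continuity of $g_0$. For upper semicontinuity I would modify a fixed minimizer $\zeta^*$ for $(x_0,t_0)$ on a short initial window, replacing it with a competitor joining $(x_n,t_n)$ to $(\zeta^*(t_0+\delta),t_0+\delta)$ and letting it coincide with $\zeta^*$ afterwards, controlling the resulting cost change by the local Lipschitz dependence of $L_0$ on $(x,q)$ on the bounded-velocity set where the modification takes place.

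From concatenation of minimizers one derives the dynamic programming principle
\[
v(x,t) \;=\; \min_{\zeta(t)=x}\Bigl\{ v(\zeta(s),s) + \int_t^s L_0(\zeta,\tau,\dot\zeta)\,d\tau\Bigr\},
\qquad t < s \le T,
\]
which is the analytic engine for the two half-inequalities. For the subsolution inequality at a $C^1$ supertangent $\varphi$ of $v$ at $(x_0,t_0) \in \T^N \times (0,T)$, I would plug the linear competitor $\zeta(s)=x_0+(s-t_0)q$ into the DPP on $[t_0,t_0+h]$, use $\varphi \ge v$ with equality at the test point, divide by $h>0$ and send $h \to 0^+$; the resulting inequality $-\varphi_t(x_0,t_0) - D\varphi(x_0,t_0)\cdot q \le L_0(x_0,t_0,q)$, valid for every $q \in \R^N$, converts to $-\varphi_t + H_0(x_0,t_0,-D\varphi) \le 0$ by taking the supremum over $q$ and invoking the Fenchel conjugacy between $L_0$ and $H_0$.

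For the supersolution inequality at a $C^1$ subtangent $\varphi$ I would use an optimal $\zeta^*$ realizing the DPP on $[t_0,t_0+h]$ together with Fenchel's inequality with $p_0=-D\varphi(x_0,t_0)$,
\[
\int_{t_0}^{t_0+h} L_0(\zeta^*,\tau,\dot\zeta^*)\,d\tau \;\ge\; p_0\cdot(\zeta^*(t_0+h)-x_0) - \int_{t_0}^{t_0+h} H_0(\zeta^*,\tau,p_0)\,d\tau,
\]
combined with a first-order expansion of $\varphi$ along the segment joining $(x_0,t_0)$ to $(\zeta^*(t_0+h),t_0+h)$, in which the displacement-linear contributions cancel. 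Dividing by $h$ and letting $h \to 0^+$ yields $-\varphi_t + H_0(x_0,t_0,-D\varphi) \ge 0$, provided the $o$-errors can be driven to zero. This last point is the main obstacle: from $\int_{t_0}^{t_0+h} L_0(\zeta^*,\tau,\dot\zeta^*)\,d\tau = v(x_0,t_0) - v(\zeta^*(t_0+h),t_0+h) \to 0$ and superlinearity one gets $\zeta^*(t_0+h) \to x_0$, but without the reinforced assumption \textbf{(H5)} the average velocity $(\zeta^*(t_0+h)-x_0)/h$ need not stay bounded, so a careful subsequence extraction, or a Jensen-type bound exploiting convexity of $L_0$ in $q$, will be needed to conclude.
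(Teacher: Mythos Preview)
Your continuity argument and your subsolution argument are essentially the paper's (Propositions \ref{iran}, \ref{iranbis}, and the second half of the proof of Theorem \ref{LxxO}); only the bookkeeping with the varying time intervals $[t_n,T]$ needs a little more care than you indicate, and the paper handles this by constant extension.

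The supersolution argument, however, has a genuine gap, and neither of your suggested fixes closes it. The Taylor expansion of $\varphi$ at $(x_0,t_0)$ produces an error $o(|h|+|\Delta|)$ with $\Delta = \zeta^*(t_0+h)-x_0$, and after dividing by $h$ you need $|\Delta|/h$ bounded. A Jensen bound with the superlinear minorant $\theta$ only gives $h\,\theta(|\Delta|/h) \le \int_{t_0}^{t_0+h} L_0 \le C$, i.e.\ $\theta(|\Delta|/h) \le C/h$, which places no restriction as $h\to 0$. Subsequence extraction fails for the same reason: nothing forces the optimal curve to be differentiable at the specific base time $t_0$.

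The remedy --- used in the paper --- is to abandon the pointwise Taylor expansion and instead apply the chain rule \emph{along the optimal curve}: since $\varphi$ is $C^1$ and $\zeta^*$ is AC, the identity
\[
\varphi(\zeta^*(t_0+h),t_0+h)-\varphi(x_0,t_0)=\int_{t_0}^{t_0+h}\bigl[\varphi_t(\zeta^*,\tau)+D\varphi(\zeta^*,\tau)\cdot\dot\zeta^*(\tau)\bigr]\,d\tau
\]
is exact, with no error term. Combining this with $\varphi \le v$, the DPP, and a mean--value selection of times $s_n\in(t_0,t_0+h_n)$ yields the pointwise inequality
\[
-\varphi_t(\zeta^*(s_n),s_n)-D\varphi(\zeta^*(s_n),s_n)\cdot\dot\zeta^*(s_n)\ge L_0(\zeta^*(s_n),s_n,\dot\zeta^*(s_n)),
\]
from which superlinearity of $L_0$ in $q$ immediately bounds $|\dot\zeta^*(s_n)|$ (since the left side is at most $C(1+|\dot\zeta^*(s_n)|)$); one then passes to the limit along a convergent subsequence of velocities. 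Alternatively, once you have the chain--rule identity, your own Fenchel route goes through without difficulty: bounding $D\varphi(\zeta^*,\tau)\cdot\dot\zeta^*+L_0$ from below by $-H_0(\zeta^*,\tau,-D\varphi(\zeta^*,\tau))$ and averaging over $[t_0,t_0+h]$ gives the supersolution inequality directly, with no $o$--terms to control.
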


We first collect some auxiliary results.

\smallskip

Set
\begin{eqnarray*}
  M_{g_0} &=& \max_{\T^N} g_0,\\
  m_{g_0} &=& \min_{\T^N} g_0.
\end{eqnarray*}

\begin{Lemma}\label{lemiran}
The function $v$ given by the Lax--Oleinik formula is bounded on $\T^N \times [0,T]$.
\end{Lemma}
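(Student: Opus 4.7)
The plan is to obtain the two‐sided bound by exploiting the explicit minimum in the Lax--Oleinik formula together with the elementary estimates on $g_0$ and $L_0$ already available in the paper. The upper bound should follow from evaluating the infimum on a single, explicit competitor, while the lower bound should follow by minorizing the integrand and the final datum by constants.

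For the upper bound I would test the formula against the constant curve $\zeta \equiv x$ defined on $[t,T]$, which is admissible since it is trivially absolutely continuous. This yields
\[
v(x,t) \leq g_0(x) + \int_t^T L_0(x,s,0)\,ds \leq M_{g_0} + M_0\,(T-t) \leq M_{g_0} + M_0\,T,
\]
where $M_0$ is the constant introduced in Notation \ref{M0mL} and $M_{g_0}=\max g_0$.

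For the lower bound I would simply use $g_0 \geq m_{g_0}$ on $\T^N$ and the pointwise inequality $L_0(x,t,q) \geq m_{L_0}$ from Notation \ref{M0mL} inside the integral, uniformly over the competitor $\zeta$. This gives, for every admissible $\zeta$,
\[
g_0(\zeta(T)) + \int_t^T L_0(\zeta,s,\dot\zeta)\,ds \geq m_{g_0} + m_{L_0}(T-t),
\]
and hence $v(x,t) \geq m_{g_0} + \min\{0, m_{L_0}\,T\}$.

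There is no real obstacle here; the only subtlety is to remember that $m_{L_0}$ may be negative (so one cannot drop the $\min\{0,\cdot\}$), and that $M_0 = \max|L_0(\cdot,\cdot,0)|$ already handles the possible sign of $L_0(x,s,0)$ in the upper estimate. Combining the two inequalities yields a uniform bound on $|v|$ in $\T^N \times [0,T]$, which is precisely the claim.
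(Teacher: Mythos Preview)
Your proof is correct and follows essentially the same approach as the paper: upper bound via the constant competitor $\zeta\equiv x$, lower bound via the pointwise minorants $g_0\ge m_{g_0}$ and $L_0\ge m_{L_0}$. Your extra care with $\min\{0,m_{L_0}T\}$ is harmless (the paper simply writes $m_{g_0}+m_{L_0}T$, tacitly treating the negative case), but the argument is the same.
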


\begin{proof}
We have
\begin{eqnarray*}
  v(x_0,t_0) &\leq& g_0(x_0) + \int_{t_0}^T L_0(x_0,t,0) \, dt \leq M_0 \, T + M_{g_0},\\
  v(x_0,t_0) &\geq& m_{g_0} + m_{L_0} \, T
\end{eqnarray*}
for any $(x_0,t_0) \in \T^N \times [0,T]$, where $M_0$, $m_{L_0}$ are the constants appearing in Notation \ref{M0mL}.
\end{proof}

\smallskip

\begin{Lemma}\label{preiran}
Let $x_0 \in \T^N$ and $t_1 < t_2$. Then
\[
  v(x_0,t_1) \leq v(x_0,t_2) + M_0 \, (t_2 - t_1).
\]
\end{Lemma}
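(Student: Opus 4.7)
The strategy is to build an admissible competitor for the infimum defining $v(x_0,t_1)$ by taking an optimal (or near--optimal) curve for $v(x_0,t_2)$ and prepending to it the stationary segment $\zeta \equiv x_0$ on $[t_1,t_2]$. Since the additional piece has zero velocity, its contribution to the action is controlled by $L_0(x_0,t,0)$, which in turn is bounded in absolute value by the constant $M_0$ of Notation \ref{M0mL}.

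More precisely, I would proceed as follows. By Propositions \ref{sciare} and \ref{presciare} together with the continuity of $g_0$, the minimum in the Lax--Oleinik formula defining $v(x_0,t_2)$ is attained by some curve $\eta:[t_2,T]\to\T^N$ with $\eta(t_2)=x_0$, so that
\[
v(x_0,t_2)=g_0(\eta(T))+\int_{t_2}^T L_0(\eta,s,\dot\eta)\,ds.
\]
Define $\zeta:[t_1,T]\to\T^N$ by $\zeta(t)=x_0$ for $t\in[t_1,t_2]$ and $\zeta(t)=\eta(t)$ for $t\in[t_2,T]$. Then $\zeta$ is absolutely continuous, $\zeta(t_1)=x_0$, and $\dot\zeta\equiv 0$ on $[t_1,t_2]$, so $\zeta$ is an admissible competitor in the Lax--Oleinik formula for $v(x_0,t_1)$.

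Applying the formula and splitting the integral,
\[
v(x_0,t_1)\leq g_0(\zeta(T))+\int_{t_1}^{t_2}L_0(x_0,t,0)\,dt+\int_{t_2}^T L_0(\eta,t,\dot\eta)\,dt.
\]
The second and third terms on the right sum to $v(x_0,t_2)$, while by Notation \ref{M0mL}
\[
\int_{t_1}^{t_2}L_0(x_0,t,0)\,dt\leq M_0\,(t_2-t_1),
\]
which yields the claimed inequality.

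There is no real obstacle; the only thing worth checking is that if the minimum in the Lax--Oleinik formula for $v(x_0,t_2)$ were not attained one could still run the argument with an $\varepsilon$--minimizer and let $\varepsilon\to 0$, but attainment has already been established, so this contingency does not arise.
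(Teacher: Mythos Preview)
Your proof is correct and essentially identical to the paper's: both take an optimal curve for $v(x_0,t_2)$, prepend the constant segment at $x_0$ on $[t_1,t_2]$, and bound the extra action by $M_0(t_2-t_1)$. One trivial slip: it is the \emph{first} and third terms on the right (namely $g_0(\zeta(T))$ and $\int_{t_2}^T L_0(\eta,t,\dot\eta)\,dt$) that combine to give $v(x_0,t_2)$, not the second and third.
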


\begin{proof}
Let $\zeta:[t_2,T] \to \T^N$ be a curve with
\[
  v(x_0,t_2) = g_0(\zeta(T)) + \int_{t_2}^T L_0(\zeta,t,\dot\zeta) \, dt.
\]
Define $\overline \zeta$ by
\[
 \overline \zeta(t)=
 \begin{cases}
   x_0 & \text{in $[t_1,t_2]$},\\
   \zeta(t) & \text{in $[t_2,T]$}.
 \end{cases}
\]
Using the Lax--Oleinik formula with $\overline \zeta$ as competitor, we get
\[
  v(x_0,t_1) \leq  g_0(\overline \zeta(T)) + \int_{t_1}^T  L_0(\overline \zeta,t,\dot{\overline \zeta}) \, dt
  \leq v(x_0,t_2) + M_0 \, (t_2 - t_1).
\]
\end{proof}

\smallskip

\begin{Lemma}\label{prepostiran}
For every $t_0 \in [0,T]$, the function $x \mapsto v(x,t_0)$ is lsc on $\T^N$.
\end{Lemma}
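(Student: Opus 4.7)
The plan is to prove lower semicontinuity by the direct method, using the compactness and lower semicontinuity of the action functional already established. Fix $t_0 \in [0,T]$ and a point $x_0 \in \T^N$, and let $x_n \to x_0$. The goal is to show
\[
\liminf_n v(x_n, t_0) \geq v(x_0, t_0).
\]

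First I would select, for each $n$, an action-minimizing curve $\zeta_n : [t_0, T] \to \T^N$ with $\zeta_n(t_0) = x_n$ realizing the Lax--Oleinik minimum, so that
\[
v(x_n, t_0) = g_0(\zeta_n(T)) + \int_{t_0}^T L_0(\zeta_n, s, \dot\zeta_n)\, ds.
\]
Using boundedness of $v$ from Lemma \ref{lemiran} together with the $\T^N$-boundedness of $g_0$, the actions $\int_{t_0}^T L_0(\zeta_n, s, \dot\zeta_n)\, ds$ are uniformly bounded in $n$.

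Next, I would invoke Proposition \ref{sciare} (in its obvious restriction to the subinterval $[t_0, T]$, which follows by the same Ascoli--Arzel\`a argument since the lower bound \eqref{pettis} on $L_0$ is uniform in time) to extract a subsequence converging uniformly to a limit curve $\zeta_0 : [t_0, T] \to \T^N$ with $\zeta_0(t_0) = x_0$. Continuity of $g_0$ gives $g_0(\zeta_n(T)) \to g_0(\zeta_0(T))$, and Proposition \ref{presciare} yields
\[
\liminf_n \int_{t_0}^T L_0(\zeta_n, s, \dot\zeta_n)\, ds \geq \int_{t_0}^T L_0(\zeta_0, s, \dot\zeta_0)\, ds.
\]
Combining these, and using $\zeta_0$ as a competitor in the Lax--Oleinik formula at $(x_0, t_0)$, gives
\[
\liminf_n v(x_n, t_0) \geq g_0(\zeta_0(T)) + \int_{t_0}^T L_0(\zeta_0, s, \dot\zeta_0)\, ds \geq v(x_0, t_0),
\]
which is the desired inequality.

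The only mild obstacle is that Propositions \ref{sciare} and \ref{presciare} are stated for curves defined on the whole $[0,T]$, so one must observe that both results (equiabsolute continuity and lower semicontinuity of the action) are purely local in the time variable and therefore immediately carry over to $[t_0, T]$; alternatively, one can extend each $\zeta_n$ to $[0, t_0]$ by setting $\zeta_n(t) = x_n$ for $t \in [0, t_0]$, which adds a uniformly bounded contribution $M_0 \, t_0$ to the action and reduces the situation to the statements as given. Apart from this bookkeeping, the argument is a routine application of the direct method of the calculus of variations.
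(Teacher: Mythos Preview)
Your proof is correct and follows essentially the same approach as the paper: select minimizing curves for the Lax--Oleinik formula at $(x_n,t_0)$, use Lemma \ref{lemiran} to bound their actions, extract a uniformly convergent subsequence via Proposition \ref{sciare}, and conclude with the lower semicontinuity of the action from Proposition \ref{presciare}. The paper separates out the trivial case $t_0=T$ (where $v(\cdot,T)=g_0$ is continuous) and applies Propositions \ref{sciare} and \ref{presciare} on $[t_0,T]$ without comment; your remark about extending the curves by constants or localizing these propositions is a welcome bit of extra care.
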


\begin{proof}
For $t_0=T$ the claim is trivial since $v(\cdot,T)=g_0$ is continuous. Assume $t_0 < T$ and fix $x_0 \in \T^N$. Let $x_n$ be a sequence converging to $x_0$. For each $n$, let $\zeta_n$ be a curve with
\[
  v(x_n,t_0) =  g_0(\zeta_n(T)) +  \int_{t_0}^T L_0(\zeta_n, t, \dot \zeta_n) \, dt.
\]
By Lemma \ref{lemiran}, the integrals in the right–hand side of the above formula are equibounded, hence by Proposition \ref{sciare} the curves $\zeta_n$ uniformly converge in $[t_0,T]$, up to a subsequence, to a curve $\zeta$ with $\zeta(t_0)= x_0$. By Proposition \ref{presciare}
\[
  \liminf_n \bigg( g_0(\zeta_n(T)) + \int_{t_0}^T L_0(\zeta_n, t, \dot  \zeta_n) \, dt \bigg)
  \geq  g_0(\zeta(T)) + \int_{t_0}^T L_0(\zeta, t, \dot  \zeta) \, dt
  \geq v(x_0,t_0),
\]
which eventually yields
\[
  \liminf_n v(x_n,t_0) \geq v(x_0,t_0).
\]
\end{proof}

\smallskip

\begin{Proposition}\label{iran}
The function $v$ given by the Lax--Oleinik formula is lsc in $\T^N \times [0,T]$.
\end{Proposition}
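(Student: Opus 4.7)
The strategy is to pass to the limit in the Lax--Oleinik formula by constructing a limiting curve that witnesses the desired lower bound. Let $(x_n,t_n) \to (x_0,t_0)$ in $\T^N \times [0,T]$; we may assume $t_0 < T$, since otherwise the claim reduces to the continuity of $g_0$. For each $n$ with $t_n < T$ the plan is to pick a minimizer $\zeta_n : [t_n,T] \to \T^N$ with $\zeta_n(t_n)=x_n$ realizing $v(x_n,t_n)$; such curves exist by the same combination of Propositions \ref{sciare} and \ref{presciare} used to justify the minimum in \eqref{LO}.

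Next I would extend each $\zeta_n$ to a curve $\tilde\zeta_n \in \G$ by setting $\tilde\zeta_n \equiv x_n$ on $[0,t_n]$. Lemma \ref{lemiran} together with the boundedness of $g_0$ gives a uniform bound on $\int_{t_n}^T L_0(\zeta_n,t,\dot\zeta_n)\,dt$, and the extra contribution on $[0,t_n]$ is controlled by $M_0\,T$; hence the total actions $A_{L_0}(\tilde\zeta_n)$ are equibounded. Proposition \ref{sciare} then yields a subsequence $\tilde\zeta_n \to \tilde\zeta_\infty$ uniformly on $[0,T]$. Uniform convergence together with $t_n \to t_0$ forces $\tilde\zeta_\infty(t_0) = \lim_n \tilde\zeta_n(t_n) = \lim_n x_n = x_0$. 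Moreover, for any $t < t_0$ one has $t < t_n$ eventually (in both cases $t_n \geq t_0$ and $t_n < t_0$), so $\tilde\zeta_n(t) = x_n$ eventually, giving $\tilde\zeta_\infty(t)=x_0$; by continuity $\tilde\zeta_\infty \equiv x_0$ on all of $[0,t_0]$.

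The final step is a single lower-semicontinuity computation. Decomposing
\[
v(x_n,t_n) = g_0(\tilde\zeta_n(T)) + A_{L_0}(\tilde\zeta_n) - \int_0^{t_n} L_0(x_n,t,0)\,dt,
\]
the last term converges by continuity of $L_0$ to $\int_0^{t_0} L_0(x_0,t,0)\,dt$. Proposition \ref{presciare} applied to $A_{L_0}$, combined with the continuity of $g_0$, then yields
\[
\liminf_n v(x_n,t_n) \geq g_0(\tilde\zeta_\infty(T)) + A_{L_0}(\tilde\zeta_\infty) - \int_0^{t_0} L_0(x_0,t,0)\,dt.
\]
Since $\tilde\zeta_\infty \equiv x_0$ on $[0,t_0]$, this subtracted integral exactly cancels the contribution of $A_{L_0}(\tilde\zeta_\infty)$ on $[0,t_0]$, leaving $g_0(\tilde\zeta_\infty(T)) + \int_{t_0}^T L_0(\tilde\zeta_\infty,t,\dot{\tilde\zeta}_\infty)\,dt$, which is admissible in the LO formula for $v(x_0,t_0)$ because $\tilde\zeta_\infty(t_0)=x_0$; this gives the desired inequality.

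The main obstacle is organizing the case $t_n < t_0$: Lemma \ref{preiran} alone handles $t_n \geq t_0$ cleanly (by combining its one-sided Lipschitz bound in time with the spatial lsc of Lemma \ref{prepostiran}), but produces the wrong direction when $t_n \to t_0^-$. The extension trick bypasses this asymmetry by transferring the whole problem onto the fixed domain $[0,T]$, and the observation that $\tilde\zeta_\infty \equiv x_0$ on $[0,t_0]$ in either case --- because the constant extension fills in $[0,t_n]$ with the value $x_n$ --- is precisely what makes the action bookkeeping cancel in the last display.
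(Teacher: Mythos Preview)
Your argument is correct and is in fact cleaner than the paper's. One small caveat: the sentence ``we may assume $t_0 < T$, since otherwise the claim reduces to the continuity of $g_0$'' is not quite right --- when $t_n<T$ approaches $T$, continuity of $g_0$ alone says nothing about $v(x_n,t_n)$. However, your extension argument covers $t_0=T$ without change: the limit curve $\tilde\zeta_\infty$ is then constant $\equiv x_0$ on all of $[0,T]$, and the cancellation leaves exactly $g_0(x_0)=v(x_0,T)$. So the gap is purely presentational.

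The paper takes a different route. Instead of extending to $[0,T]$ and invoking lower semicontinuity of the global action $A_{L_0}$, it fixes an auxiliary $\eps>0$ with $t_0+\eps<T$, evaluates the optimal curves at the intermediate time $t_0+\eps$, and combines the crude lower bound $\int_{t_n}^{t_0+\eps} L_0(\zeta_n,t,\dot\zeta_n)\,dt \geq m_{L_0}(t_0+\eps-t_n)$ with the spatial lower semicontinuity of $v(\cdot,t_0+\eps)$ (Lemma~\ref{prepostiran}) and the one--sided time estimate of Lemma~\ref{preiran}, then sends $\eps\to 0$; the boundary case $t_0=T$ is treated separately via an equicontinuity argument. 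Your approach bypasses both auxiliary lemmas and the $\eps$--layer entirely: by moving everything onto the fixed domain $[0,T]$ you can appeal to Proposition~\ref{presciare} once, and the observation that the limit curve is constant on $[0,t_0]$ makes the bookkeeping trivial. The paper's route has the advantage of isolating the spatial and temporal behaviours (Lemmas~\ref{preiran} and~\ref{prepostiran} are of independent use), but as a proof of the proposition itself your argument is the more direct one.
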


\begin{proof}
Fix $(x_0,t_0) \in \T^N \times [0,T)$. Let $(x_n,t_n)$ be a sequence converging to $(x_0,t_0)$. Fix $\eps$ with $t_0+ \eps < T$, and take $n$ large enough so that $t_n < t_0 +\eps$. Denote by $\zeta_n: [t_n,T] \to \T^N$ a sequence of curves with
\begin{equation}\label{iran01}
 v(x_n,t_n) =  g_0(\zeta_n(T)) +  \int_{t_n}^T L_0(\zeta_n, t, \dot \zeta_n) \, dt.
\end{equation}
Set
\[
  y_n = \zeta_n(t_0 + \eps)
\]
so that
\[
  v(y_n,t_0 + \eps) =  g_0(\zeta_n(T)) + \int_{t_0 + \eps}^T L_0(\zeta_n,t,\dot\zeta_n) \, dt.
\]
Thus
\begin{equation}\label{iran3}
  v(x_n,t_n) =  v(y_n,t_0+\eps) + \int_{t_n}^{t_0+\eps} L_0(\zeta_n,t,\dot\zeta_n) \, dt
  \geq v(y_n,t_0+\eps) +  m_{L_0} \, (t_0 + \eps -t_n).
\end{equation}
Extend each $\zeta_n$ to $[0,T]$ by setting
\[
  \zeta_n(t) = \zeta_n(t_n) =x_n \quad \text{in $[0,t_n)$}.
\]
By Lemma \ref{lemiran} and Proposition \ref{sciare}, the extended curves converge uniformly in $[0,T]$, up to subsequences, to a curve $\zeta$. In particular
\begin{equation}\label{iran4}
  x_0 = \lim_n x_n =  \lim_n \zeta_n(t_n)= \zeta(t_0)
  \quad\hbox{and}\quad
  \lim_n y_n = \lim_n \zeta_n(t_0+\eps)=  \zeta(t_0 + \eps).
\end{equation}
From \eqref{iran3}, \eqref{iran4} and Lemmata \ref{preiran}, \ref{prepostiran} we get
\begin{eqnarray}
 \liminf_n v(x_n,t_n) &\geq& \liminf_n v(y_n,t_0 +\eps) + m_{L_0} (t_0+\eps -t_n) \nonumber \\
   &\geq& v(\zeta(t_0+\eps),t_0+\eps) + m_{L_0} \, \eps
   \geq  v(\zeta(t_0+\eps),t_0) + (m_{L_0}- M_0) \eps.
    \label{iran02}
\end{eqnarray}
Letting $\eps \to 0$ and using again Lemma \ref{prepostiran} and \eqref{iran4} we deduce
\[
  \liminf_n v(x_n,t_n)
  \geq \lim_{\eps \to 0} \big( v(\zeta(t_0+\eps),t_0) + (m_{L_0}-M_0) \eps \big)
  \geq   v(x_0,t_0),
\]
which shows the claimed lower semicontinuity in $\T^N \times [0,T)$.

We now consider the case of a point $(x_0,T)$. Let $(x_n,t_n)$ be a sequence in $\T^N \times [0,T)$ converging to $(x_0,T)$, and let $\zeta_n$ be a sequence of curves satisfying \eqref{iran01}. Then
\begin{equation}\label{iran02T}
  v(x_n,t_n) \geq g_0(\zeta_n(T)) +  m_{L_0} \, (T-t_n).
\end{equation}
Extending the curves $\zeta_n$ as before, they are equicontinuous and uniformly convergent, up to a subsequence, to a curve $\zeta$. Let $\omega$ be a common modulus of continuity, then
\[
  |x_0 - \zeta_n(T)| \leq |x_0 - x_n| +  |\zeta_n(t_n) - \zeta_n(T)| \leq |x_0 - x_n| + \omega(T-t_n),
\]
hence
\[
  \lim_n \zeta_n(T) = x_0.
\]
From \eqref{iran02T}, we obtain
\[
  \liminf_n v(x_n,t_n) \geq \lim_n \big( g_0(\zeta_n(T)) +  m_{L_0} \, (T-t_n) \big) = g_0(x_0).
\]
This concludes the argument.
\end{proof}

\smallskip

\begin{Proposition}\label{iranbis}
The function $v$ given by the Lax--Oleinik formula is usc on $\T^N \times [0,T]$.
\end{Proposition}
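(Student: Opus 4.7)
The plan is to split on whether $t_0 = T$ or $t_0 < T$. For the boundary case $t_0 = T$, given any sequence $(x_n, t_n) \to (x_0, T)$ with $t_n \leq T$, I would use the constant curve $\zeta \equiv x_n$ on $[t_n, T]$ as a competitor in the LO formula to obtain $v(x_n, t_n) \leq g_0(x_n) + M_0 (T - t_n)$, and continuity of $g_0$ would then yield $\limsup_n v(x_n, t_n) \leq g_0(x_0) = v(x_0, T)$.

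For the interior case $t_0 < T$ I would fix a minimizer $\zeta_0 : [t_0, T] \to \T^N$ for $v(x_0, t_0)$ (existing by Propositions \ref{sciare} and \ref{presciare}) and, after extracting a subsequence so that either $t_n \leq t_0$ or $t_n > t_0$ for all $n$, construct competitors $\overline\zeta_n : [t_n, T] \to \T^N$ that perturb $\zeta_0$ to start at $x_n$. In the first subcase I would set $\overline\zeta_n \equiv x_n$ on $[t_n, t_0]$ and $\overline\zeta_n(s) = \zeta_0(s) + (x_n - x_0)(T - s)/(T - t_0)$ on $[t_0, T]$; in the second, $\overline\zeta_n(s) = \zeta_0(s) + (x_n - \zeta_0(t_n))(T - s)/(T - t_n)$ on $[t_n, T]$. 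Both constructions yield $\overline\zeta_n(t_n) = x_n$ and $\overline\zeta_n(T) = \zeta_0(T)$, so by Lemma \ref{sub},
\[
v(x_n, t_n) \leq g_0(\zeta_0(T)) + A_{L_0}(\overline\zeta_n),
\]
and the proof reduces to showing $A_{L_0}(\overline\zeta_n) \to A_{L_0}(\zeta_0)$.

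Pointwise, $L_0(\overline\zeta_n(s), s, \dot{\overline\zeta}_n(s)) \to L_0(\zeta_0(s), s, \dot\zeta_0(s))$ for almost every $s$: the space perturbation is of order $|x_n - x_0|$ uniformly in $s$, while the added velocity is a constant vector of order $|x_n - x_0|/(T - t_0)$ (respectively $|x_n - \zeta_0(t_n)|/(T - t_n)$), both tending to zero, so continuity of $L_0$ applies. On the waiting interval $[t_n, t_0]$ in the first subcase, the integrand is dominated by $M_0$ and its contribution is $O(t_0 - t_n) \to 0$. The main technical obstacle is the passage to the limit on the remaining piece: since $\dot\zeta_0$ is only in $L^1$, a pointwise dominating function for $L_0(\overline\zeta_n,\cdot,\dot\zeta_0 + \mathrm{const})$ is not automatic from (H1)--(H3).

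To overcome this, I would approximate $\zeta_0$ in action by Lipschitz curves $\zeta_0^{(k)} : [t_0, T] \to \T^N$ sharing the same endpoints and with $A_{L_0}(\zeta_0^{(k)}) \to A_{L_0}(\zeta_0)$, a density property obtained via piecewise linear interpolation combined with Jensen's inequality, exploiting convexity in $q$ and uniform superlinearity (which make the action on sub-intervals absolutely continuous with respect to Lebesgue measure). For each fixed $k$, the analogous perturbations $\overline\zeta_n^{(k)}$ associated with $\zeta_0^{(k)}$ have velocities bounded uniformly in $n$, so the integrand is uniformly bounded on the (compact) image and dominated convergence gives $A_{L_0}(\overline\zeta_n^{(k)}) \to A_{L_0}(\zeta_0^{(k)})$ as $n \to \infty$. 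Letting $k \to \infty$ afterwards (or by diagonal extraction), one would conclude $\limsup_n v(x_n, t_n) \leq g_0(\zeta_0(T)) + A_{L_0}(\zeta_0) = v(x_0, t_0)$, as required.
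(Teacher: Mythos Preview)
Your boundary case $t_0 = T$ is fine. For $t_0 < T$, however, the Lipschitz approximation step is not justified under {\bf (H1)}--{\bf (H3)} alone, and this is a genuine gap. These hypotheses do not exclude the Lavrentiev phenomenon: there exist Lagrangians that are continuous, convex in $q$ and uniformly superlinear---Ball--Mizel type integrands such as $(x^3-t)^2|q|^{14} + \eps|q|^2$---for which the action infimum over absolutely continuous curves is strictly smaller than over Lipschitz curves with the same endpoints. In that situation no Lipschitz $\zeta_0^{(k)}$ can have action approaching $A_{L_0}(\zeta_0)$, and your scheme collapses. The Jensen step you invoke controls only the $q$-variable; the error incurred when replacing $(\zeta_0(\tau),\tau)$ by the interpolated $(\zeta^{(k)}(s),s)$ inside $L_0$ is not dominated when $|\dot\zeta_0(\tau)|$ is large, because no uniform-in-$q$ modulus of continuity in $(x,t)$ is assumed. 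This is precisely the mechanism behind the Lavrentiev gap, and uniform superlinearity does not neutralise it.

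The paper sidesteps the issue by never perturbing the optimal curve. Given a minimizer $\zeta$ for $v(x_0,t_0)$, for each $n$ and each small $\eps>0$ it builds a competitor for $v(x_n, t_n-\eps)$: a geodesic from $x_n$ to $x_0$ on $[t_n-\eps,\, t_0]$ (with speed $|x_0-x_n|/(t_0-t_n+\eps)<1$, hence cost at most $M_1(t_0-t_n+\eps)$ where $M_1=\max\{|L_0(x,t,q)|:|q|\leq 1\}$), followed by $\zeta$ \emph{unchanged} on $[t_0,T]$. This yields $v(x_n,t_n-\eps) \leq v(x_0,t_0) + M_1(t_0-t_n+\eps)$. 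The shift from $t_n-\eps$ back to $t_n$ is then absorbed via the already-established lower semicontinuity (Proposition~\ref{iran}), which gives $v(x_n,t_n) \leq \liminf_{\eps\to 0} v(x_n,t_n-\eps)$. Sending $n\to\infty$ concludes. No approximation of $\zeta$, no dominated convergence on perturbed integrands, and the argument is immune to Lavrentiev-type obstructions.
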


\begin{proof}
Fix $(x_0,t_0) \in \T^N \times [0,T]$ and let  $(x_n,t_n)$ be a  sequence  converging to $(x_0,t_0)$ . Let $\eps >0$ be fixed. Let  $\zeta:[t_0,T] \to \T^N$ be a curve with
\[v(x_0,t_0) = g_0(\zeta(T)) + \int_{t_0}^T L_0(\zeta,t,\dot\zeta) \, dt.\]
For $n$ with $t_0 > t_n-\eps$  and $ \frac{|x_0 -x_n|}{t_0-t_n+\eps} < 1$, we define
\[  \zeta_n(t)= \left \{ \begin{array}{cc}
                        \eta_n(t) & \quad\hbox{in $[t_n-\eps ,t_0]$} \\
                         \zeta(t) &  \hbox{in $[t_0,T]$ }
                     \end{array} \right . ,\]
 where $\eta_n :[t_n-\eps , t_0] \to \T^N$ is the geodesic joining  $x_n$ to $x_0$. In particular
 \[|\dot \eta_n (t)|= \frac{|x_0 -x_n|}{t_0-t_n+\eps} < 1.\]
By definition of $v$
\begin{eqnarray*}
  v(x_n,t_n-\eps) &\leq&  g_0(\zeta_n(T)) + \int_{t_n-\eps}^T L_0(\zeta_n,t,\dot\zeta_n) \, dt \leq
\int_{t_n-\eps}^{t_0} L_0(\eta_n,t, \dot\eta_n  ) \, dt + v(x_0,t_0) \\
   &\leq&  M_1 \, (t_0-t_n + \eps) + v(x_0,t_0),
\end{eqnarray*}
where
\[M_1 = \max \{|L_0(x,t,q) \mid |q| \leq  1\}.\]
Fix $n$ and let $\eps \to 0$. Using the lower semicontinuity of $v$ proved in Proposition \ref{iran}, we get from the above inequality
\[ v(x_n,t_n) \leq \liminf_{\eps \to 0} v(x_n,t_n-\eps) \leq v(x_0,t_0) + M_1 \, (t_0-t_n) \txt{for $n$ large enough.}\]
Letting $n \to +\infty$, this yields
\[\limsup_n v(x_n,t_n) \leq v(x_0,t_0),\]
hence the claimed upper semicontinuity.
\end{proof}

\smallskip

\begin{proof}[{\it Proof of Theorem \ref{LxxO}}]
By Propositions \ref{iran} and \ref{iranbis}, $v$ is continuous in $\T^N \times [0,T]$, and, by its very definition, it coincides with $g_0$ at $t=T$.

We now prove that it is a viscosity supersolution of \eqref{HJ-}. Let $\psi$ be a $C^1$ strict subtangent to $v$ at some $(x_0,t_0) \in \T^N \times (0,T)$. Let $\zeta: [t_0,T] \to \T^N$ be an optimal curve for $v(x_0,t_0)$, and choose a sequence $t_n \in [t_0,T]$ converging to $t_0$. By the subtangency of $\psi$,
\begin{eqnarray*}
   && \int_{t_0}^{t_n} \big( \psi_t(\zeta(t),t) + D\psi(\zeta(t)) \cdot \dot \zeta(t) \big) \, dt  \\
 &=& \psi(\zeta(t_n),t_n) - \psi(x_0,t_0) \leq v(\zeta(t_n),t_n) - v(x_0,t_0)\\
 &=&  \int_{t_0}^{t_n} -L_0(\zeta(t),t,\dot\zeta(t)) \, dt.
\end{eqnarray*}
Therefore, there exists a sequence of differentiability times $s_n \in [t_0,t_n]$ for $\zeta$ with
\begin{equation}\label{linda1}
  - \psi_t(\zeta(s_n),s_n) - D\psi(\zeta(s_n)) \cdot\dot \zeta(s_n) \geq
L_0(\zeta(s_n),s_n,\dot\zeta(s_n))
\end{equation}
for any $n$. Let $U$ be a closed neighborhood of $(x_0,t_0)$ containing the points $(\zeta(s_n),s_n)$ for $n$ large. Set
\[
  M = \max_U |D\psi(x,t)|.
\]
From \eqref{linda1},
\[
  M |\dot\zeta(s_n)| \geq - D\psi(\zeta(s_n)) \cdot  \dot \zeta(s_n)
  \geq  L_0(\zeta(s_n),s_n,\dot\zeta(s_n)) + \psi_t(\zeta(s_n),s_n).
\]
Since $L_0$ is superlinear in $q$, the sequence $\dot\zeta(s_n)$ must be bounded, hence converges, up to a subsequence, to some $q \in \R^N$. Letting $n \to + \infty$ in \eqref{linda1},
\[
  - \psi_t(x_0,t_0) - D\psi(x_0,t_0) \cdot q \geq L_0(x_0,t_0,q),
\]
and consequently
\begin{equation}\label{linda01}
  - \psi_t(x_0,t_0) + H(x_0,t_0, -D\psi(x_0,t_0)) \geq 0,
\end{equation}
which shows that $v$ is a supersolution in $\T^N \times (0,T)$. The supersolution property at $t=0$ follows by standard extension arguments for time–dependent Hamilton–Jacobi equations.

Next we prove that $v$ is a subsolution to \eqref{HJ-}. Let $\varphi$ be a $C^1$ strict supertangent to $v$ at a point $(x_0,t_0) \in \T^N \times [0,T)$. Then, for all $q \in \R^N$ and all $h >0$ sufficiently small,
\[
\frac{\varphi(x_0,t_0) - \varphi(x_0 + h q, t_0+h)}{h}
 \leq \frac{ v(x_0,t_0)- v(x_0 + h q, t_0+h)}{h}.
\]
Since the argument is local, we may regard $\T^N$ as $\R^N$ in a chart around $x_0$. We get
\[
  \frac{\varphi(x_0,t_0) - \varphi(x_0 + h q, t_0+h)}{h}
  \leq \frac 1{h} \int_{t_0}^{t_0+h} L_0( x_0 + (t-t_0) q, t, q) \, dt.
\]
Letting $h \to 0$, we get
\[
  -\varphi_t(x_0,t_0) - D\varphi(x_0,t_0) \cdot q \leq L_0(x_0, t_0,q).
\]
Since $q \in \R^N$ is arbitrary, this inequality implies
\[
  -\varphi_t(x_0,t_0)+ H(x_0,t_0, -D \varphi(x_0,t_0)) \leq 0.
\]
This ends the proof.
\end{proof}

\smallskip

We close the section with a stability result.

\smallskip

\begin{Proposition}\label{stabia}
Let $g_n$ be a sequence of continuous functions on $\T^N$ converging uniformly to a function $g_0$. The LO solutions $v_n$ of \eqref{HJ-} with final datum $g_n$ converge uniformly in $\T^N \times [0,T]$ to the LO solution $v$ of \eqref{HJ-} with final datum $g_0$.
\end{Proposition}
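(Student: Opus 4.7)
The plan is to exploit the variational representation provided by the Lax--Oleinik formula together with the fact that the perturbation $g_n - g_0$ enters only through the final datum and is bounded uniformly in $\T^N$. Since the set of admissible competitor curves in the LO formula depends solely on the prescribed value at time $t$ and not on the final datum, the minimization is performed over the same class for every $n$, and the only difference between the functionals defining $v_n$ and $v$ is the additive term $g_n(\zeta(T)) - g_0(\zeta(T))$, which is controlled uniformly in $\zeta$ by $\|g_n - g_0\|_\infty$.

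Concretely, I would fix $\eps > 0$ and choose $n_0$ such that $\|g_n - g_0\|_\infty < \eps$ for every $n \geq n_0$. For any $(x,t) \in \T^N \times [0,T]$ and any absolutely continuous curve $\zeta$ with $\zeta(t) = x$, I would write
\[
g_n(\zeta(T)) + \int_t^T L_0(\zeta,s,\dot\zeta) \, ds
\leq
g_0(\zeta(T)) + \int_t^T L_0(\zeta,s,\dot\zeta) \, ds + \eps,
\]
and, taking the minimum over all such $\zeta$ on both sides, deduce $v_n(x,t) \leq v(x,t) + \eps$. The reverse inequality $v(x,t) \leq v_n(x,t) + \eps$ follows by exchanging the roles of $g_n$ and $g_0$. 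This gives $\|v_n - v\|_\infty \leq \eps$ for $n \geq n_0$, which is uniform convergence on $\T^N \times [0,T]$.

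There is no substantial obstacle here: the existence of minimizers (Propositions \ref{sciare} and \ref{presciare} together with the continuity of each $g_n$) guarantees that the comparison between the two infima is legitimate, but in fact one does not even need to invoke attainment, since the inequality passes through the infimum directly. The argument is essentially the observation that the map $g_0 \mapsto v(\cdot,\cdot)$ given by the LO formula is $1$--Lipschitz from $(C(\T^N),\|\cdot\|_\infty)$ to $(C(\T^N \times [0,T]),\|\cdot\|_\infty)$.
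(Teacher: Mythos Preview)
Your argument is correct and considerably more direct than the paper's. You observe that the Lax--Oleinik formula is an infimum over a class of curves that does not depend on the final datum, so the map $g_0 \mapsto v$ is $1$--Lipschitz from $C(\T^N)$ to $C(\T^N \times [0,T])$ in the uniform norm; this immediately yields $\|v_n - v\|_\infty \leq \|g_n - g_0\|_\infty \to 0$.

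The paper instead argues via the sequential characterization of uniform convergence on the compact $\T^N \times [0,T]$: it fixes $(x_n,t_n) \to (x_0,t_0)$ and shows $v_n(x_n,t_n) \to v(x_0,t_0)$. The $\limsup$ direction there is essentially your comparison argument. For the $\liminf$ direction, however, the paper picks minimizers $\zeta_n$ realizing $v_n(x_n,t_n)$, extends them to a common interval, extracts a uniformly convergent subsequence via Proposition~\ref{sciare}, and then invokes the lower semicontinuity of the action (Proposition~\ref{presciare}). None of this machinery is needed: your symmetric use of the $1$--Lipschitz bound handles both directions at once, and yields the sharper quantitative estimate. The paper's route would, on the other hand, adapt more readily if the Lagrangian itself were also varying along the sequence (as it does elsewhere in the paper when the measure argument changes), which may explain the choice of method.
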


\begin{proof}
Fix $(x_0,t_0)$ in $\T^N \times [0,T)$, an approximating sequence $(x_n,t_n)$ and $\eps >0$. Assume
\[
  v_n(x_n,t_n) = g_n(\zeta_n(T)) + \int_{t_n}^T L_0(\zeta_n,t,\dot\zeta_n) \, dt
\]
for some curves $\zeta_n$ with $\zeta_n(t_n)= x_n$. For $n$ large, we have $t_n \geq t_0 - \eps$. Extend each $\zeta_n$ to $[t_0-\eps,T]$ by setting
\begin{equation}\label{stabia01}
  \zeta_n(t) = x_n  \quad \text{for $t \in [t_0 -\eps,t_n]$.}
\end{equation}
Then
\[
  v_n(x_n,t_n)
  = g_n(\zeta_n(T)) + \int_{t_0-\eps}^T L_0(\zeta_n,t,\dot\zeta_n) \, dt
  - \int_{t_0 - \eps}^{t_n} L_0(x_n,t,0) \, dt.
\]
By the definition of LO solution and \eqref{stabia01}, the actions of the curves $\zeta_n$ are bounded in $[t_0- \eps,T]$, so by Proposition \ref{sciare} they converge uniformly, up to subsequences, to a curve $\zeta$ on $[t_0-\eps,T]$. Moreover, since the action is lsc with respect to the uniform convergence by Proposition \ref{presciare} and
\[
  \int_{t_0-\eps}^{t_n} L_0(x_n,t,0) \, dt \leq  M_0 \, (t_n-t_0+ \eps),
\]
we get
\[
  \liminf_n v_n(x_n,t_n)
  \geq g_0(\zeta(T)) + \int_{t_0 - \eps}^T L_0(\zeta,t,\dot\zeta) \, dt -  M_0 \, \eps
  \geq v(x_0,t_0 -\eps) - M_0 \, (t_n-t_0+ \eps).
\]
Since $\eps$ is arbitrary and $v$ is continuous,
\begin{equation}\label{stabia1}
 \liminf_n v_n(x_n,t_n) \geq v(x_0,t_0).
\end{equation}

Now let $n$ be so large that
\[
  \|g_n-g_0\|_\infty < \eps.
\]
For such $n$ and a suitable choice of curves $\eta_n:[t_n,T] \to \T^N$ with $\eta_n(t_n)=x_n$ we have
\begin{eqnarray*}
  v(x_n,t_n) &=& g_0(\eta_n(T)) + \int_{t_n}^T L_0(\eta_n,t,\dot\eta_n) \, dt \\
   &\geq& g_n(\eta_n(T)) - \eps + \int_{t_n}^T L_0(\eta_n,t,\dot\eta_n) \, dt \\
   &\geq& v_n(x_n,t_n)- \eps.
\end{eqnarray*}
Passing to the limit as $n$ goes to infinity, we get
\[
  v(x_0,t_0) = \lim_n v(x_n,t_n) \geq \limsup_n v_n(x_n,t_n)- \eps.
\]
The constant $\eps$ being arbitrary, we derive
\begin{equation}\label{stabia2}
  \limsup_n v_n(x_n,t_n) \leq v(x_0,t_0).
\end{equation}
Inequalities \eqref{stabia1}, \eqref{stabia2} imply the assertion.
\end{proof}

\bigskip

\section{Strict topology}\label{topino}

We recall the notion of {\em strict topology} on $C_b(\G)$ (see \cite{HJ72}), the space of bounded continuous functions on $\G$. It is defined through the family of seminorms
\[
  \{ \|\cdot \|_\psi  \mid \psi \in B_0(\G)\},
\]
where $B_0(\G)$ is the space of bounded Borel functions on $\G$ {\em vanishing at infinity} and
\[
  \|f\|_\psi= \|\psi \, f\|_\infty   \quad \text{for any $f \in C_b(\G)$.}
\]
A function $\psi$ is said to vanish at infinity if for every $\eps >0$ there exists a compact subset $K$ of $\G$ with
\[
  |\psi(x)| < \eps \quad \text{in $\G \setminus K$.}
\]
A basis of neighborhoods of a fixed $f_0$ is then given by the sets
\[
  \{ f \in C_b(\G) \mid \|f - f_0\|_{\psi_i} < \eps\} \qquad
   \eps > 0, \, M \in \N, \, \psi_i \in B_0(\G),\,  i=1,\dots,M.
\]
The space $C_b(\G)$ endowed with the strict topology is therefore locally convex, Hausdorff and, moreover, complete with respect to the uniformity induced by these seminorms, see \cite[Theorem 1]{HJ72}.

\smallskip

\begin{Remark}
The classical definition of strict topology (see \cite{Bu58}, \cite{Co67}) is formulated for bounded functions defined on locally compact spaces and involves seminorms $\|\cdot\|_\psi$ with $\psi$ continuous and vanishing at infinity. In our setting $\G$ is an infinite dimensional Banach space, and the only continuous function vanishing at infinity is the zero function. Therefore, to extend the theory to non–locally compact spaces, the continuity assumption on $\psi$ must be removed.
\end{Remark}

The key feature of the strict topology on $C_b(\G)$ is that its topological dual is precisely the space of signed Borel measures with bounded variation on $\G$, denoted by $\mathbb M(\G)$; see \cite[Theorem 2]{HJ72}. In particular, the normalized positive elements of $\mathbb M(\G)$ are exactly the probability measures $\bP(\G)$. We denote by $( \cdot, \cdot )$ the corresponding duality pairing. Note that
\[
  (\mu,f) = \langle \mu, f\rangle \quad \text{for any $f \in C_b(\G)$, $\mu \in \bP(\G)$,}
\]
see Notation \ref{postcedric}.

The weak--star topology on the dual space $\mathbb M(\G)$, i.e., pointwise convergence on $C_b(\G)$, coincides with the narrow topology, and is induced by the family of dual seminorms
\begin{equation}\label{topino1}
  \|\mu\|_\psi ^* = \sup \{ |\langle \mu,f \rangle| \mid f \in C_b(\G), \, \|f\|_\psi \leq 1 \}
\end{equation}
for $\psi$ varying in $B_0(\G)$.
Thus $\mathbb M(\G)$ is locally convex and in addition Hausdorff, see \cite[Chapter 28.15]{Sc97}.

Narrow convergence of a net $\mu_\alpha \to \mu$ can be equivalently described by
\[
  \|\mu_\alpha - \mu\|_\psi^* \to  0  \quad \text{for any $\psi \in B_0(\G)$.}
\]

\smallskip

Unsurprisingly, we say that a subset $\mathbb A$ of $\mathbb M(\G)$ is {\em complete} if every Cauchy net contained in $\mathbb A$ converges to an element of $\mathbb A$.

\smallskip

\begin{Definition}\label{topino2}
A net $\mu_\al$ contained in $\mathbb M(\G)$ is said {\em Cauchy} (with respect to the seminorms $\|\cdot\|^*_\psi$) if for every $\eps >0$ and $\psi \in B_0(\G)$ there exists $\al_0$ such that
\[
  \al, \, \be \succ \al_0 \Rightarrow \|\mu_\al - \mu_\be \|_\psi^* < \eps.
\]
\end{Definition}

\bigskip

\section{Conjugate and biconjugate of the perturbed Lagrangian $ L_\be$}\label{bi}

We consider the Lagrangian
\[
  L: \T^N \times \bP(\T^N) \times \R^N \to \R
\]
introduced in Section \ref{fissa}, and its perturbation $ L_\be$, defined in \eqref{betaccio0}. For convenience, we recall that
\[
  L_\be (x,\mu,q)= L(x, \mu,q) \wedge (\be \, |q| + \be_0),
\]
with $\be >0$, and $\be_0$ satisfying
\begin{equation}\label{beta00}
 \beta_0 > \max \{ L(x,\mu,0) \mid (x,\mu) \in \T^N \times \bP(\T^N)\}.
\end{equation}
We further denote by $\ov{B(0,\be)}$ the closed ball of radius $\be$ centered at $0$ in $\R^N$.

The conjugate of $ L_\be$ is given by
\[
  (x,\mu,p) \mapsto  L^*_\be(x,\mu,p) = \sup_{q \in \R^N} \big( p \cdot q -  L_\be(x,\mu,q)\big),
\]
and the biconjugate $ L^{**}_\be$ in the usual way.

Our aim is to prove:

\begin{Proposition}\label{graziolla}
The function $(x,\mu,q) \mapsto L^{**}_\be(x,\mu,q)$ is continuous.
\end{Proposition}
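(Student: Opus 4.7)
The plan is to obtain continuity of $L^{**}_\be$ by representing both $L^*_\be$ and $L^{**}_\be$ as maxima of jointly continuous integrands over compact constraint sets \emph{independent of the parameters}, then invoking (twice) the elementary fact that $\max_{y\in K}f(\cdot,y)$ is continuous whenever $K$ is compact and $f$ is jointly continuous.

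First, using the uniform superlinearity {\bf (H3)} of $L$, I would select a radius $R_\be>0$, independent of $(x,\mu)$, such that $L(x,\mu,q)>\be|q|+\be_0$ whenever $|q|\geq R_\be$; hence $L_\be(x,\mu,q)=\be|q|+\be_0$ outside $\ov{B(0,R_\be)}$. A first consequence is that $L^*_\be(x,\mu,p)=+\infty$ for $|p|>\be$ (take $q=tp/|p|$ and let $t\to+\infty$), so the effective domain of $L^*_\be(x,\mu,\cdot)$ is contained in $\ov{B(0,\be)}$ and the supremum defining $L^{**}_\be$ may be restricted to $|p|\leq\be$.

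Next I would restrict the supremum in the definition of $L^*_\be$ to the compact ball $\ov{B(0,R_\be)}$. Setting $M_L:=\max_{\T^N\times\bP(\T^N)}L(\cdot,\cdot,0)$, which satisfies $M_L<\be_0$ by \eqref{beta00}, for $|p|\leq\be$ and $|q|\geq R_\be$ one has
\[ p\cdot q - L_\be(x,\mu,q) \leq \be|q|-\be|q|-\be_0 = -\be_0 < -M_L \leq -L_\be(x,\mu,0), \]
so the supremum is attained on $\ov{B(0,R_\be)}$, uniformly in $(x,\mu,p)$. Therefore, on $\T^N\times\bP(\T^N)\times\ov{B(0,\be)}$,
\[ L^*_\be(x,\mu,p) = \max\{p\cdot q - L_\be(x,\mu,q) \mid |q|\leq R_\be\}. \]
Since $L_\be$ is jointly continuous and $\T^N\times\bP(\T^N)\times\ov{B(0,R_\be)}$ is compact (by Remark \ref{cedric}), an elementary Berge--type argument gives continuity of $L^*_\be$ on $\T^N\times\bP(\T^N)\times\ov{B(0,\be)}$.

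Finally the biconjugate reads $L^{**}_\be(x,\mu,q)=\max\{p\cdot q - L^*_\be(x,\mu,p)\mid|p|\leq\be\}$, a maximum of a jointly continuous function over a fixed compact ball; by the same argument it is continuous on $\T^N\times\bP(\T^N)\times\R^N$. The main obstacle is the uniform compact truncation in the first step: ensuring the maximizer $q^*$ stays in a fixed bounded set uniformly in $(x,\mu,p)$ relies crucially on the \emph{strict} inequality $\be_0>\max L(x,\mu,0)$ built into \eqref{beta00}, together with the uniformity of the superlinearity in {\bf (H3)}; once the parameter--free compact reduction is in place, everything else is routine.
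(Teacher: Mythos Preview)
Your proposal is correct and follows essentially the same route as the paper: first show $L^*_\be(x,\mu,\cdot)$ has effective domain in $\ov{B(0,\be)}$ (the paper's Lemma~\ref{grazia}), then establish continuity of $L^*_\be$ on $\T^N\times\bP(\T^N)\times\ov{B(0,\be)}$ (the paper's Lemma~\ref{graziella}), and finally deduce continuity of $L^{**}_\be$ from the representation~\eqref{grazia1}. The only stylistic difference is that you obtain an explicit a~priori bound $R_\be$ on the maximizer in $q$ and then invoke a Berge-type maximum argument, whereas the paper argues sequentially (lsc/usc) and rules out unbounded maximizers by contradiction; both hinge on the same strict inequality $\be_0>\max L(\cdot,\cdot,0)$ from~\eqref{beta00}, and your compact reduction is in fact the cleaner packaging.
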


\smallskip

We begin with two preliminary lemmata.

\begin{Lemma}\label{grazia}
We have $ L_\be^*(x,\mu,p) = + \infty$ if and only if $|p| >\be$.
\end{Lemma}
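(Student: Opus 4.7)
The plan is a straightforward two--sided argument, using only the defining inequality
\[
L_\be(x,\mu,q) \leq \be\,|q| + \be_0 \qquad\hbox{for all $q$,}
\]
the fact that $L_\be(x,\mu,q) = L(x,\mu,q)$ when the first factor in the infimum is attained, and the superlinearity of $L$ in $q$.

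First I would dispatch the easy direction ($|p|>\be \Rightarrow L^*_\be(x,\mu,p)=+\infty$). For $|p|>\be$ I would test the supremum along the ray $q = t\,p/|p|$, $t>0$, and use $L_\be(x,\mu,q) \leq \be\,|q|+\be_0$ to get
\[
p\cdot q - L_\be(x,\mu,q) \geq t|p| - \be t - \be_0 = t(|p|-\be) - \be_0,
\]
which tends to $+\infty$ as $t \to +\infty$; hence $L^*_\be(x,\mu,p) = +\infty$.

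For the converse direction ($|p|\leq \be \Rightarrow L^*_\be(x,\mu,p)<+\infty$) I would split $\R^N$ into the two regions according to which term realizes the minimum defining $L_\be$. On the region where $L_\be(x,\mu,q)=L(x,\mu,q)$ the quantity $p\cdot q - L_\be(x,\mu,q) = p\cdot q - L(x,\mu,q)$ is bounded above by the Legendre transform $L^*(x,\mu,p) = H(x,\mu,p)$, which is finite by superlinearity of $L$. On the region where $L_\be(x,\mu,q)=\be|q|+\be_0$ one has
\[
p\cdot q - L_\be(x,\mu,q) \leq |p||q| - \be|q| - \be_0 = (|p|-\be)|q| - \be_0 \leq -\be_0,
\]
since $|p|\leq \be$. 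Taking the max of the two bounds yields $L^*_\be(x,\mu,p) \leq \max\{H(x,\mu,p),-\be_0\} < +\infty$.

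There is no real obstacle here: the assumption \eqref{beta00} on $\be_0$ is not even needed in this lemma (it enters only when identifying $L^{**}_\be$ with the convex hull of $L_\be$). The only care required is the separation into the two regimes defining $L_\be$, so that the superlinear bound on $L$ and the affine bound $\be|q|+\be_0$ can each be used where they are active.
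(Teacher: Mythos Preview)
Your proof is correct and follows essentially the same approach as the paper: the forward direction ($|p|>\be \Rightarrow L^*_\be=+\infty$) is identical, testing along the ray $q=\lambda p/|p|$. For the converse the paper argues by taking a sequence $q_n$ with $|q_n|\to+\infty$ realizing the supremum and showing this forces $|p|>\be$, whereas you prove the contrapositive directly by the two--region case split, obtaining the explicit bound $L^*_\be(x,\mu,p)\leq \max\{H(x,\mu,p),-\be_0\}$; this is a minor variation, and your version is slightly more informative.
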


\begin{proof}
Take $p$ with $|p| > \be$. Then
\[
  L_\be^*(x,\mu,p) \geq \lim_{\la \to + \infty}
  \big( \la |p| -  L_\be(x,\mu, \la p/|p|) \big)
  = \lim_{\la \to + \infty} \big( \la |p| - \be \, \la  - \be_0\big)= + \infty.
\]
Conversely, assume $L_\be^*(x,\mu,p) = + \infty$. Then there exists a sequence $q_n$ with $|q_n| \to + \infty$ such that
\[
  + \infty =  \lim_n \big( p \cdot q_n -  L_\be(x,\mu,q_n)\big)
  = \lim_n \big( p \cdot q_n - \be \, |q_n| - \be_0\big)
  \leq  \lim_n \big( |p| \, |q_n| - \be \, |q_n| - \be_0\big),
\]
which implies $|p| > \be$.
\end{proof}

\smallskip

\begin{Lemma}\label{graziella}
The function $L_\be^*$ is continuous on $\T^N \times \bP(\T^N) \times \ov{B(0,\be)}$.
\end{Lemma}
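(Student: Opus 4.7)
The plan is to reduce the unrestricted supremum defining $L_\be^*(x,\mu,p)$ to a maximum over a fixed compact ball in $q$, uniformly for $(x,\mu,p) \in \T^N \times \bP(\T^N) \times \ov{B(0,\be)}$; once this reduction is in place, joint continuity of the integrand together with Berge's maximum theorem (or an elementary two--sided extraction argument) will yield continuity of $L_\be^*$.

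First, I would exploit the uniform superlinearity of $L$ in $q$, inherited from {\bf (H3)}, to select $R_\be > 0$ such that $L(x,\mu,q) \geq (\be+1)|q| + \be_0$ whenever $|q| \geq R_\be$, for all $(x,\mu) \in \T^N \times \bP(\T^N)$. By the definition \eqref{betaccio0} of $L_\be$, this means $L_\be(x,\mu,q) = \be|q| + \be_0$ on the complement of $\ov{B(0,R_\be)}$. Consequently, for $|p| \leq \be$ and $|q| > R_\be$,
\[
p \cdot q - L_\be(x,\mu,q) = p \cdot q - \be|q| - \be_0 \leq (|p|-\be)|q| - \be_0 \leq -\be_0.
\]
On the other hand, the competitor $q = 0$ gives
\[
-L_\be(x,\mu,0) = -L(x,\mu,0) \geq -\max_{\T^N \times \bP(\T^N)} L(\cdot,\cdot,0) > -\be_0,
\]
where the strict gap comes from \eqref{beta00} and is independent of $(x,\mu,p)$. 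Hence the supremum defining $L_\be^*(x,\mu,p)$ cannot be approached on $\{|q| > R_\be\}$, so
\[
L_\be^*(x,\mu,p) = \max_{q \in \ov{B(0,R_\be)}} \bigl( p \cdot q - L_\be(x,\mu,q) \bigr).
\]

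The conclusion is then routine. Since $L_\be = L \wedge (\be|\cdot| + \be_0)$ is the minimum of two continuous functions it is jointly continuous, and so is the integrand $(x,\mu,p,q) \mapsto p \cdot q - L_\be(x,\mu,q)$; the maximization set $\ov{B(0,R_\be)}$ is compact and independent of parameters, and Berge's maximum theorem then yields continuity of $L_\be^*$ on the whole of $\T^N \times \bP(\T^N) \times \ov{B(0,\be)}$. The one obstacle requiring care is ensuring that the compact reduction is genuinely uniform in the parameters: this uses both the uniform superlinearity of $L$ (which fixes $R_\be$ independently of $(x,\mu)$) and the strict inequality \eqref{beta00} (which separates the value at $q=0$ from the tail bound by a margin uniform in $(x,\mu,p)$).
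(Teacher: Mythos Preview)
Your argument is correct and takes a cleaner route than the paper's own proof. The paper proceeds by a direct lsc/usc argument: for lower semicontinuity it fixes a maximizer $q_0$ at the limit point and passes to the limit; for upper semicontinuity it extracts maximizers $q_n$ along the approximating sequence and rules out $|q_n|\to\infty$ by a contradiction using \eqref{beta00}. You instead front-load the work by establishing a uniform compact reduction---showing once and for all that for every $(x,\mu,p)$ in the domain the supremum is a maximum over the fixed ball $\ov{B(0,R_\be)}$---and then invoke Berge's maximum theorem on a jointly continuous integrand over a constant compact constraint set.

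Both proofs rest on the same two ingredients: the tail identity $L_\be(x,\mu,q)=\be|q|+\be_0$ for large $|q|$ coming from uniform superlinearity of $L$, and the strict gap in \eqref{beta00} which separates the value at $q=0$ from the tail bound. Your organization is more transparent and extracts a slightly stronger intermediate fact (the uniform localization of maximizers), which also immediately yields the attainment of the supremum that the paper uses somewhat implicitly at the start of its lsc step. The paper's approach, by contrast, is more hands-on and avoids citing Berge, but at the cost of repeating the boundedness argument inside the usc step rather than isolating it beforehand.
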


\begin{proof}
Fix $(x,\mu,p) \in \T^N \times \bP(\T^N) \times \ov{B(0,\be)}$ and a sequence $(x_n,\mu_n,p_n)$ with $(x_n,\mu_n,p_n) \to (x,\mu,p)$.

By Lemma \ref{grazia}, since $|p| \leq \be$, the supremum in the definition of $L^*_\be(x,\mu,p)$ is attained at some $q_0$, and since $ L_\be$ is continuous in all arguments, we have
\begin{eqnarray*}
  L^*_\be(x,\mu,p)
  &=& p \cdot q_0 - L_\be(x,\mu,q_0) \\
  &=& \lim_n \big( p_n \cdot q_0 -  L_\be(x_n,\mu_n,q_0) \big) \\
  &\leq& \liminf_n  L_\be^*(x_n,\mu_n,p_n),
\end{eqnarray*}
which yields lower semicontinuity at $(x,\mu,p)$.

Again by Lemma \ref{grazia}, for each $n$ there exists $q_n$ with
\[
  L^*_\be(x_n,\mu_n,p_n) = p_n \cdot q_n -  L_\be(x_n,\mu_n,q_n)
  \quad \text{for any $n$.}
\]
Assume by contradiction that $|q_n| \to + \infty$, up to subsequences. Then for $n$ large enough,
\[
  L^*_\be(x_n,\mu_n,p_n) = p_n \cdot q_n - \be \, |q_n| - \be_0,
\]
hence $q_n$ satisfies
\[
  \frac{q_n}{|q_n|}=\frac{p_n}{|p_n|},
\]
since with such a choice the first term in the right--hand side of the above formula is maximized and the other ones are not affected. Then
\[
  L^*_\be(x_n,\mu_n,p_n) =  |q_n| |p_n| - \be \, |q_n| - \be_0,
\]
which yields
\[
  L^*_\be(x_n,\mu_n,p_n) \leq - \be_0.
\]
By \eqref{beta00},
\[
  L^*_\be(x_n,\mu_n,p_n) < - L(x_n,\mu_n,0) = -L_\be(x_n,\mu_n,0),
\]
which contradicts the definition of $L_\be^*$.

We conclude that the sequence $q_n$ is bounded, hence, up to subsequences, $q_n \to q_1 \in \R^N$. By continuity of $L_\be$,
\[
  \lim_n  L^*_\be(x_n,\mu_n,p_n)
  = \lim_n \big( p_n \cdot q_n -  L_\be(x_n,\mu_n,q_n)\big)
  = p \cdot q_1 - L_\be(x,\mu,q_1)
  \leq L^*_\be(x,\mu,p).
\]
Combining upper and lower semicontinuity, we deduce the claimed continuity.
\end{proof}

\smallskip

As a consequence of the above lemmata, we have
\begin{equation}\label{grazia1}
  L^{**}_\be(x, \mu,q) =  \max_{p \in \ov{B(0,\be)}} \big( p \cdot q - L^*_\be(x,\mu,p) \big)
  \quad \text{for any $(x,\mu,q)$.}
\end{equation}

\smallskip

\begin{proof}[{\bf Proof of Proposition \ref{graziolla}}]
Let $(x_0,\mu_0,q_0) \in \T^N \times \bP(\T^N) \times \R^N$ and let $(x_n,\mu_n,q_n)$ be a sequence converging to $(x_0,\mu_0,q_0)$. By \eqref{grazia1} and the continuity of
\[
  p \mapsto p \cdot q -  L^*_\be(x,\mu,p),
\]
there exist, by Lemma \ref{graziella}, $p_0$, $p_n$ in $\ov{B(0,\be)}$ with
\begin{eqnarray*}
  L^{**}_\be(x_0,\mu_0,q_0) &=& p_0 \cdot q_0 - L^*_\be(x_0,\mu_0,p_0), \\
  L^{**}_\be(x_n,\mu_n,q_n) &=& p_n \cdot q_n - L^*_\be(x_n,\mu_n,p_n) \quad\hbox{for any $n$}.
\end{eqnarray*}
We derive
\begin{eqnarray*}
   L^{**}_\be(x_0,\mu_0,q_0)
   &=& p_0 \cdot q_0 -  L_\be^*(x_0,\mu_0,p_0) \\
   &=& \lim_n \big( p_0\cdot q_n -  L_\be^*(x_n, \mu_n,p_0) \big) \\
   &\leq& \liminf_n  L^{**}_\be(x_n,\mu_n,q_n),
\end{eqnarray*}
since $p_n$ converges, up to subsequences, to some $p_1 \in \ov{B(0,\be)}$, we get
\begin{eqnarray*}
 \lim_n  L^{**}_\be(x_n,\mu_n,q_n)
 &=& \lim_n  \big( p_n \cdot q_n -  L_\be^*(x_n,\mu_n,p_n)\big) \\
 &=&  p_1\cdot q_0 - L_\be^*(x_0, \mu_0,p_1) \\
 &\leq&   L^{**}_\be(x_0,\mu_0,q_0).
\end{eqnarray*}
The two above inequalities prove the assertion.
\end{proof}

\bigskip

 \bibliography{MFG}
\bibliographystyle{siam}

\end{document}